\newtheorem{theorem}{Theorem}[section]
\newtheorem{conjecture}[theorem]{Conjecture}
\newtheorem{question}{Question}
\newtheorem{proposition}[theorem]{Proposition}
\newtheorem{lemma}[theorem]{Lemma}
\newtheorem{hypotheseh}{Hypothesis}
\newtheorem*{hypotheseH}{Hypothesis H}
\newtheorem{fact}[theorem]{Fact}
\newtheorem{corollary}[theorem]{Corollary}
\theoremstyle{definition}
\newtheorem{definition}[theorem]{Definition}
\newtheorem{remark}[theorem]{Remark}
\renewcommand{\d}{\mathrm{d}}
\renewcommand{\phi}{\varphi}
\renewcommand{\epsilon}{\varepsilon}
\newcommand{\Id}{\mathrm{Id}}
\newcommand{\im}{\mathrm{im}}
\newcommand{\Vect}{\mathrm{Span}}
\newcommand{\dom}{\mathrm{supp}}
\newcommand{\bb}{\mathbb}
\renewcommand{\t}{\text}
\newcommand{\rank}{\mathrm{rk}}
\renewcommand{\index}{\operatorname{index}}
\title{On growth rate and contact homology}
\author{Anne Vaugon}
\address{Anne Vaugon, Laboratoire de Mathématiques Jean Leray, 2, rue de la Houssinière, BP 92208,
44322 Nantes Cedex 3, France }
\email{anne.vaugon@univ-nantes.fr}
\begin{document}
\begin{abstract}
It is a conjecture of Colin and Honda that the number of periodic Reeb orbits of universally tight contact structures on hyperbolic manifolds grows exponentially with the period, and they speculate further that the growth rate of contact homology is polynomial on non-hyperbolic geometries. Along the line of the conjecture, for manifolds with a hyperbolic component that fibers on the circle, we prove that there are infinitely many non-isomorphic contact structures for which the number of periodic Reeb orbits of any non-degenerate Reeb vector field grows exponentially. Our result hinges on the exponential growth of contact homology which we derive as well. We also compute contact homology in some non-hyperbolic cases that exhibit polynomial growth, namely those of universally tight contact structures on a circle bundle non-transverse to the fibers.
\end{abstract}
\maketitle

\section{Introduction and main results}
The goal of this paper is to study connections between the asymptotic number of periodic Reeb orbits of a $3$-dimensional contact manifold and the geometry of the underlying manifold. We first recall some basic definitions of contact geometry.
A $1$-form $\alpha$ on a $3$-manifold $M$ is called a \emph{contact form} if $\alpha\wedge\d \alpha$ is a volume form on $M$. A \emph{(cooriented) contact structure} $\xi$ is a plane field defined as the kernel of a contact form.
If $M$ is oriented, the contact structure $\ker(\alpha)$ is called \emph{positive} if the $3$-form $\alpha\wedge\d \alpha$ orients $M$. 
The \emph{Reeb vector field} associated to a contact form $\alpha$ is the vector field $R_\alpha$ such that $\iota_{R_\alpha}\alpha=1$ and $\iota_{R_\alpha}\d\alpha=0$.
It strongly depends on $\alpha$. The Reeb vector field (or the associated contact form) is called \emph{non-degenerate} if all periodic orbits are non-degenerate ($1$ is not an eigenvalue of the differential of the first return map).

A fundamental step in the classification of contact structures was the definition of tight and overtwisted contact structures given by Eliashberg \cite{Eliashberg89} following Bennequin \cite{Bennequin83}. A contact structure $\xi$ is said to be \emph{overtwisted} if there exists an embedded disk tangent to $\xi$ on its boundary. Otherwise $\xi$ is said to be \emph{tight}. \emph{Universally tight} contact structures are structures admitting a tight lift on universal cover. A contact form is called \emph{hypertight} if there is no contractible periodic Reeb orbit. Universally tight and hypertight~\cite{Hofer93} contact structures are always tight. 

To study a contact structure, it is useful to focus on the periodic orbits of a Reeb vector field. Weinstein conjectured that every contact form on a closed manifold admits a periodic orbit and that was proved in dimension $3$ by Taubes~\cite{Taubes07}.
Beyond the existence of a single periodic Reeb orbit, Colin and Honda are interested in the number  $N_L(\alpha)$ of periodic Reeb orbits with period at most $L$. They believe it is related to one of the Thurston geometries of the underlying manifold, namely the hyperbolic geometry.

\begin{conjecture}[Colin-Honda {\cite[Conjecture 2.10]{ColinHonda08}}]\label{conjecture_hyperbolique}
For all non-degenerate contact forms~$\alpha$ of a universally tight contact structure on a hyperbolic closed $3$-manifold, $N_L(\alpha)$ exhibits an exponential growth.
\end{conjecture}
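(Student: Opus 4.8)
The plan is to bound $N_L(\alpha)$ from below by the growth of a contact invariant, and then to read that growth off from the hyperbolic geometry. For any non-degenerate contact form $\alpha$ defining $\xi$, contact homology is computed from a chain complex generated by the good closed Reeb orbits of $\alpha$ and filtered by the period, so that $N_L(\alpha)\ge\dim HC^{\le L}_*(M,\xi)$ for every non-degenerate $\alpha$, where $HC^{\le L}_*(M,\xi)$ denotes the corresponding filtered homology. Since two defining forms of $\xi$ differ by multiplication by a function bounded away from $0$ and $\infty$ on the closed manifold $M$, the exponential growth rate of $\dim HC^{\le L}_*(M,\xi)$ in $L$ does not depend on the chosen form; the whole conjecture is therefore reduced to producing one filtered version of the contact homology of $(M,\xi)$ whose dimension grows exponentially.

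The mechanism I would use is the one that works on surface bundles. Suppose first that $M$ fibers over $S^1$ with fiber $\Sigma$ and monodromy $\phi\colon\Sigma\to\Sigma$, which is freely isotopic to a pseudo-Anosov map since $M$ is hyperbolic, and suppose $\xi$ is isotopic to a structure adapted to this fibration, so that it admits a defining form $\alpha_0$ whose Reeb flow is a small perturbation of the suspension flow of $\phi$. Then the closed Reeb orbits of $\alpha_0$ project to periodic points of $\phi$, with period comparable to the winding number over $S^1$, and the bound $\#\operatorname{Fix}(\phi^n)\gtrsim e^{hn}$ for pseudo-Anosov maps, $h=\log\lambda>0$ the logarithm of the dilatation, produces $e^{cL}$ closed Reeb orbits of period at most $L$ for some $c>0$. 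These orbits are non-degenerate, their degrees over $S^1$ grow linearly with the period, and within each free homotopy class the relevant ones are positive hyperbolic, hence of even Conley--Zehnder index; so no cancellation occurs in cylindrical contact homology and $\dim HC^{\le L}_*(M,\xi)\gtrsim e^{cL}$, whence the conjecture by the previous paragraph. This is essentially the argument of Colin--Honda \cite{ColinHonda08}, and the engine behind the theorem of the present paper.

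To reach a general hyperbolic $M$ I would bridge to this case via the virtual fibering theorem of Agol (after Wise and Kahn--Markovic): there is a finite cover $p\colon\widetilde M\to M$ that fibers over $S^1$, necessarily with pseudo-Anosov monodromy since $\widetilde M$ is hyperbolic, and $\widetilde\xi:=p^*\xi$ is again universally tight, having the same universal cover as $\xi$. One then wants to (i) put $\widetilde\xi$ in adapted position and run the mechanism above to get $\dim HC^{\le L}_*(\widetilde M,\widetilde\xi)\gtrsim e^{cL}$, and (ii) descend this to $\dim HC^{\le L}_*(M,\xi)\gtrsim e^{cL}$. I expect (i)--(ii) to be the main obstacle, and it is exactly where the difficulty of the conjecture concentrates. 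For (i) one must leverage the classification of universally tight contact structures on surface bundles (Honda, Giroux) into a normal form compatible with a pseudo-Anosov fibration, controlling Giroux torsion; it is even conceivable that not every universally tight $\xi$ is adapted, so that a different reservoir of non-cancelling orbits is needed, such as a compatible pseudo-Anosov or Anosov flow in the spirit of the bi-contact literature. For (ii) there is no exact symplectic cobordism between $M$ and $\widetilde M$ and $p^*\alpha$ need not be non-degenerate, so contact homology is not formally functorial under the cover; one must either develop an equivariant/transfer formalism relating $HC(\widetilde M,\widetilde\xi)$ and $HC(M,\xi)$, or work directly on $M$ through a sutured decomposition adapted to $\xi$. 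Finally, a foundational point underlies everything: a universally tight contact structure need not be hypertight, so cylindrical contact homology may be undefined and one must instead use a version (linearized, full algebra, or $S^1$-equivariant) for which the action filtration and, crucially, non-vanishing in the relevant free homotopy classes --- the one place where tightness, as opposed to mere orbit counting, must enter --- can be established by holomorphic-curve arguments.
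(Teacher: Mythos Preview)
The statement you are addressing is Conjecture~\ref{conjecture_hyperbolique}, not a theorem: the paper does not prove it and contains no proof to compare against. Immediately after stating it, the paper says explicitly that even the special case of a closed hyperbolic manifold that \emph{fibers} over the circle ``remains an open problem.'' What the paper actually proves is Theorem~\ref{theoreme_hyperbolique} (via Theorem~\ref{theoreme_hyperbolique_homologie}), which concerns manifolds with a \emph{nonempty} JSJ decomposition containing a hyperbolic fibered piece; the incompressible tori are essential to the argument, since they are used (Lemma~\ref{cylindres_hyperboliques}) to confine holomorphic cylinders to the hyperbolic component and thereby prevent cancellation. That mechanism is simply unavailable on a closed hyperbolic manifold.

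Your proposal is not a proof but an outline of a strategy, and you say so yourself: you identify steps (i) and (ii) as the main obstacles and then list, correctly, why neither is currently known. A few further gaps are worth naming. First, in your fibered paragraph the claim that ``the relevant ones are positive hyperbolic, hence of even Conley--Zehnder index; so no cancellation occurs'' is not justified: the first return map of a Reeb flow merely homotopic to a pseudo-Anosov need not have only hyperbolic periodic points, and the paper's related argument (Lemma~\ref{lemma_lambda_0}) instead uses Nielsen classes and the nonvanishing of the fixed-point index $\Lambda_{h^k}(n)$ to force $\dim E\neq\dim O$ in each class. Second, your step (i) presupposes that an arbitrary universally tight $\widetilde\xi$ on a hyperbolic surface bundle can be put in a form adapted to the fibration; this is exactly the open special case the paper flags. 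Third, your step (ii) --- transferring exponential growth of filtered contact homology down a finite cover --- has no known mechanism, as you acknowledge. In short, you have accurately located where the conjecture is hard, but nothing here closes those gaps, and the paper makes no claim to do so either.
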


The main result of this paper is related to Conjecture~\ref{conjecture_hyperbolique} and applies to manifolds with a non-trivial JSJ decomposition including a hyperbolic component that fibers on the circle (see \cite{BBBMP10} for more information).

\begin{theorem}\label{theoreme_hyperbolique}
Let $M$ be a closed oriented connected $3$-manifold which can be cut along a nonempty family of incompressible tori into irreducible manifolds including a hyperbolic component that fibers on the circle. Then, $M$ carries an infinite number of non-isomorphic, hypertight, universally tight contact structures such that for all hypertight non-degenerate contact forms $\alpha$, $N_L(\alpha)$ grows exponentially with~$L$.
Additionally, if the full contact homology is well defined and invariant and if $\alpha$ is only non-degenerate then $N_L(\alpha)$ grows exponentially with $L$.
\end{theorem}

Currently, contact homology is not defined in full generality. In what follows this assumption will be called Hypothesis H. The proof of the first part of Theorem~\ref{theoreme_hyperbolique} uses only a well-defined contact homology : our assumptions on $\alpha$ assure that the contact homology is always well-defined. However, the second part of the theorem depends on Hypothesis H. See Section~\ref{section_contact_homology} for more details. The fibration condition in Theorem~\ref{theoreme_hyperbolique} is not an insurmountable restriction as Agol~\cite{Agol12} recently proved the virtually fibered conjecture~\cite{Thurston82} which says that a hyperbolic $3$-manifold fibers on the  circle up to finite covering.
Note that the situation in Theorem~\ref{theoreme_hyperbolique} is different from the situation in Conjecture~\ref{conjecture_hyperbolique}.

\emph{Contact homology} and more generally \emph{Symplectic Field Theory (SFT)} are invariants of the contact structure introduced by Eliashberg, Givental and Hofer in 2000~\cite{EGH00}. This is a Floer homology invariant.
The associated complex is generated by periodic Reeb orbits and the differential ``counts'' pseudo-holomorphic curves in the symplectization.
Besides full contact homology, there exist two simpler contact homologies : \emph{cylindrical contact homology}~\cite{BEE09} and \emph{linearized contact homology} which depends on a given ``augmentation''.
Computation of contact homology hinges on finding periodic orbits and and pseudo-holomorphic curve by solving elliptic partial differential equations and this is usually out of reach. 
The \emph{growth rate of contact homology} is an invariant derived from the cylindrical or linearized contact homology introduced by Bourgeois and Colin~\cite{BourgeoisColin05}. It describes the asymptotic behavior with $L$ of the number of periodic Reeb orbits with period smaller than $L$ that contribute to contact homology. It is the contact equivalent of the growth rate of symplectic homology introduced by Seidel~\cite{Seidel08} and used by McLean~\cite{McLean10} to distinguish between cotangent bundles and smooth affine varieties.
Theorem~\ref{theoreme_hyperbolique} is a corollary of Theorem~\ref{theoreme_hyperbolique_homologie}.
\begin{theorem}\label{theoreme_hyperbolique_homologie}
Under the hypothesis of Theorem \ref{theoreme_hyperbolique}, the manifold $M$ carries an infinite number of non-isomorphic, hypertight, universally tight contact structures with an exponential growth rate of cylindrical contact homology restricted to primitive classes.
Under Hypothesis H, the growth rate of linearized contact homology is exponential for any pull-back of the trivial augmentation.
\end{theorem}
The ``pull-back'' augmentations will be explained in Section~\ref{section_lch}.
This theorem draws its inspiration in Colin and Honda's results~\cite{ColinHonda08} on exponential growth of contact homology for contact structures adapted to an open book with pseudo-Anosov monodromy. As proved by Thurston~\cite{Thurston98}, a manifold that fibers on the circle is hyperbolic if and only if it is the suspension of a surface by a diffeomorphism homotopic to a pseudo-Anosov map.

Colin and Honda speculate further that the growth rate of contact homology is polynomial in the non-hyperbolic situations. These situations are the following.
\begin{enumerate}
  \item On manifolds with spherical geometry, the growth rate of contact homology for universally tight contact structures is linear.
  \item On manifolds with a geometric structure neither hyperbolic nor spherical, the growth rate of contact homology for universally tight contact structures is usually polynomial.
\end{enumerate}
There is however already an exception to the second situation we will soon discuss.

In this paper, we study contact structures in a non-hyperbolic situation. We make use of Giroux~\cite{Giroux01} and Honda~\cite{Honda1_00} classification of positive contact structures on circle bundles to try out Colin and Honda's conjectures on a broad family of contact structures. Let $\pi : M\to S$ be a circle bundle over a closed compact surface. Figure~\ref{figure_classification_fibres_cercles} gives a summary of this classification. Statements such as ``tangent to the fibers'' or ``transverse to the fibers'' mean that there exists an isotopic contact structure with this property. Additionally, $\chi(S)$ is the Euler characteristic and $\chi(S,M)$ the Euler number of the fibration.
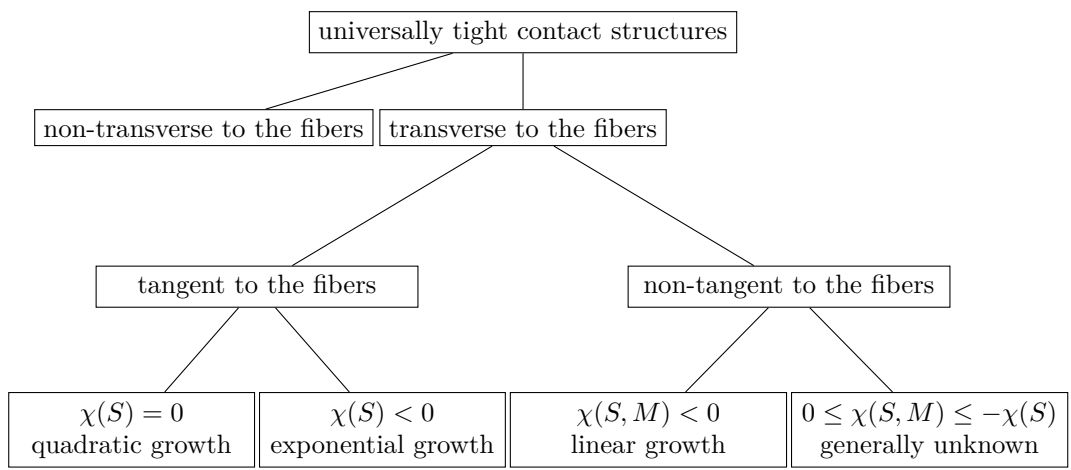
\begin{figure}[here]
\begin{tikzpicture}[
	every node/.style = {draw, text badly centered}
]
 \node(A)
 	{ universally tight contact structures
    }
 [sibling distance=4.2cm, level distance=3\baselineskip]
 child { node (B)
	{ non-transverse to the fibers
	}
 }
 child { node (C)
 	{ transverse to the fibers
	}
    [sibling distance=7cm, level distance=5\baselineskip, text width=4cm]
   child { node (D)
   		{tangent to the fibers
   		}
   	   [sibling distance=3.3cm, level distance=4.5\baselineskip, text width=3cm]
       child { node (E)
		{ $\chi(S)=0$\\
		quadratic growth
		}
       }
       child { node (F)
   		{ $\chi(S)<0$\\
   		exponential growth
   		}
       }
   }
    child { node (G)
   		{ non-tangent to the fibers
   		}
   	   [sibling distance=3.7cm, level distance=4.5\baselineskip, text width=3.4cm]
       child { node (H)
		{ $\chi(S,M)< 0$\\
		linear growth
		}
       }
       child { node (I)
   		{ $0\leq\chi(S,M)\leq-\chi(S)$\\
   		generally unknown
   		}
       }
   }
 }
    child[missing] { node {} };
\end{tikzpicture}
\caption{Universally tight positive contact structures on circle bundles over a surface $S$ with non-positive Euler characteristic.}\label{figure_classification_fibres_cercles}
\end{figure}

In some cases, the contact homology and its growth rate are already known. For instance, contact structures tangent to the fibers are fiberwise covering of $(UTS, \xi_\text{std})$ where $UTS$ is the unit tangent bundle over $S$ and $\xi_\text{std}$ is the contact element contact structure (see for example \cite{Geiges08}). In this case, the Reeb flow of the standard contact form associated to a Riemannian metric is the geodesic flow. If the surface is hyperbolic, there exists an unique closed geodesic in each homotopy class \cite[Theorem 3.9.5]{Klingenberg82} and the number of homotopy classes has exponential growth with respect to length~\cite{Milnor68}. Therefore, growth rates of the number of periodic Reeb orbits and of contact homology are exponential. This is an exception to the second statement of the conjecture of Colin and Honda in the non-hyperbolic cases.

If $S$ is a torus, universally tight contact structures are standard contact structures on $T^3$ \cite{Giroux00}. The contact homology is known~\cite{EGH00} and its growth rate is quadratic, see for example Bourgeois's Morse-Bott approach to contact homology~\cite{Bourgeois02}. Bourgeois also studied contact structures transverse and non-tangent to the fibers with $\chi(S,M)< 0$. He computed contact homology and obtained a linear growth rate. 
Each of these contact structure has an $S^1$-invariant contact structure in its isotopy class.

In this paper we study the other cases where contact structures are universally tight and non-transverse to the fibers. 
\begin{definition}[Giroux \cite{Giroux01}]
A contact structure $\xi$ on a fiber bundle $\pi : M\to S$ is \emph{walled} by an oriented multi-curve $\Gamma$ on $S$ if
  \begin{enumerate}
    \item $\xi$ is transverse to the fibers on  $M\setminus \pi^{-1}(\Gamma)$,
    \item $\xi$ is transverse to $\pi^{-1}(\Gamma)$ and tangent to fibers of $\pi^{-1}(\Gamma)$. We call $\pi^{-1}(\Gamma)$ a \emph{wall}.
  \end{enumerate}
\end{definition}

Note that walled contact structures admit an $S^1$-invariant walled contact structure in their isotopy class. The following theorem justifies the previous definition.

\begin{theorem}[Giroux {\cite[Théorème 4.4]{Giroux01}}]\label{theorem_wall}
Universally tight positive contact structures non-transverse to the fibers are exactly contact structures isotopic to a contact structure walled by a non-trivial multi-curve that contains no contractible component.
\end{theorem}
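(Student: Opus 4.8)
\medskip

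\noindent\emph{Proof proposal.} The plan is to reduce everything to $S^1$-invariant models and then read off the two properties — universal tightness, and non-transversality to the fibers — from combinatorial data on the base surface $S$. For the reduction I would use, on one side, the fact recalled just above that a walled contact structure is isotopic to an $S^1$-invariant walled one, and on the other side Giroux's structural fact that \emph{every} tight contact structure on a circle bundle over a surface with $\chi(S)\le 0$ is isotopic to an $S^1$-invariant contact structure, obtained by making a fiber (or a section, or a vertical annulus) convex and propagating invariance along the $S^1$-orbits. Since ``transverse to the fibers'' and ``walled by $\Gamma$'' are both conditions one only needs to check up to isotopy, it then suffices to compare, among $S^1$-invariant universally tight contact structures, those transverse to the fibers with those walled by a non-empty multicurve all of whose components are essential.

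Next I would set up the dictionary on the base. An $S^1$-invariant contact structure $\xi$ on $\pi\colon M\to S$ descends to the following data: the locus where $\xi$ is tangent to the fibers projects to an embedded multicurve $\Gamma\subset S$; on $S\setminus\Gamma$ the plane field $\xi$ is transverse to the fibers, so there the invariant contact form behaves like a connection $1$-form whose ``curvature'' has a fixed sign on each component of $S\setminus\Gamma$, these signs alternating across $\Gamma$; and a neighbourhood of each component of $\pi^{-1}(\Gamma)$ carries the standard fiberwise-tangency model. In this language $\xi$ is transverse to the fibers exactly when $\Gamma=\emptyset$, and $\xi$ is walled by $\Gamma$ exactly when $\Gamma\ne\emptyset$. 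So the theorem becomes: an $S^1$-invariant contact structure with $\Gamma\ne\emptyset$ is universally tight if and only if no component of $\Gamma$ is contractible, and a structure with $\Gamma\ne\emptyset$ and all components of $\Gamma$ essential is never isotopic to one transverse to the fibers.

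For the tightness equivalence: if a component $\gamma_0$ of $\Gamma$ bounds a disk $D\subset S$, then $\pi^{-1}(D)$ is a solid torus in which, after a Legendrian realisation argument, one produces a meridian disk tangent to $\xi$ along its boundary, i.e. an overtwisted disk, so $\xi$ is overtwisted and in particular not universally tight. If instead every component of $\Gamma$ is essential, the incompressible tori $\pi^{-1}(\Gamma)$ lift in $\widetilde M$ to properly embedded planes or annuli cutting $\widetilde M$ into pieces on each of which $\widetilde\xi$ is a tight ``connection'' piece; a gluing argument along these incompressible surfaces (or directly Giroux's criterion, that a convex surface with no contractible dividing curve sits in a tight neighbourhood) shows $\widetilde\xi$ is tight, i.e. $\xi$ is universally tight. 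For non-transversality: if $\xi$ is transverse to the fibers then the vertical line bundle $V$ is a complement of $\xi$ in $TM$, which forces $e(\xi)=\pi^{*}e(TS)$; whereas for an $S^1$-invariant structure walled by an essential $\Gamma$, the (relative, or half-)Euler class of $\xi$ is governed by the decomposition of $S$ into the positive and negative regions $S_\pm$ cut out by $\Gamma$, and — precisely because no component of $\Gamma$ bounds, so this decomposition cannot be trivialised — its value is incompatible with $\pi^{*}e(TS)$, equivalently the $\pi^{-1}(\gamma)$ are torsion-type incompressible tori that no ambient isotopy removes. Combining these statements with the dictionary gives both inclusions and hence the equality of sets in the theorem.

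The hard part, I expect, is the last step together with the structural input of the reduction. Isotopies of contact structures can create and cancel contractible dividing curves and can ``fold'' along a fiber, so one must verify that the invariant actually tracked — the isotopy class of the essential part of $\Gamma$, or equivalently the half-Euler class — is genuinely isotopy-invariant and genuinely obstructs transversality; this is exactly where Giroux's convex-surface and bypass machinery (edge-rounding, the Giroux criterion, the Legendrian realisation principle) is indispensable, as is the non-trivial theorem that every tight contact structure on a circle bundle is $S^1$-invariant up to isotopy.
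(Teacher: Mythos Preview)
The paper does not contain a proof of this statement: it is quoted as a theorem of Giroux with the attribution \cite{Giroux01} and is used as a black box. There is therefore no ``paper's own proof'' to compare your proposal against.

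Your sketch is a reasonable outline of the strategy in Giroux's original paper: reduce to $S^1$-invariant structures, encode such a structure by the dividing multicurve $\Gamma$ on the base, and then characterise universal tightness and transversality in terms of $\Gamma$. A few cautions if you want to turn this into an actual proof. First, the structural input ``every tight contact structure on such a circle bundle is isotopic to an $S^1$-invariant one'' is itself one of the main theorems of \cite{Giroux01}, so be careful not to assume what you are trying to prove. Second, your Euler-class obstruction to transversality is not quite right as stated: there do exist contact structures transverse to the fibers with various Euler classes (indeed this is what the right-hand branch of Figure~\ref{figure_classification_fibres_cercles} classifies), so the argument that a walled structure with essential $\Gamma$ is not isotopic to a transverse one cannot be a pure Euler-class computation; in Giroux's work this uses the classification of $S^1$-invariant structures up to isotopy via the isotopy class of $\Gamma$ together with the Giroux torsion carried by the tori $\pi^{-1}(\Gamma)$. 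Third, the overtwisted-disk argument when a component of $\Gamma$ bounds a disk is correct in spirit but needs the Legendrian realisation principle and the Giroux criterion spelled out.
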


Giroux's definition of walled contact structures and Theorem \ref{theorem_wall} provide us a way to decompose our manifold into understandable pieces. This brings us to the second half of this paper.

\begin{theorem}\label{theoreme_cloisonne}
Let $\pi : M\to S$ be a circle bundle over a closed compact surface and $\xi$ be a positive contact structure walled by a non-trivial multi-curve $\Gamma=\bigcup_{i=0}^n\Gamma_i$ that contains no contractible component. Let $X=M\setminus \pi^{-1}(\Gamma)$ be the complement of the wall. Denote by $X^+_1,\dots, X^+_{n_+}$ the connected components of $X$ for which $\xi$ is positively transverse to the fibers and by $X^-_1,\dots, X^-_{n_-}$ those for which $\xi$ is negatively transverse to the fibers.
Let $\eta$ be a loop in $M$. Then, there exists a hypertight contact form $\alpha$ such that the cylindrical contact homology $HC_*^{[\eta]}(M,\alpha,\mathbb Q)$ is well defined and we have the following
\begin{enumerate}
 \item\label{C1} if $[\eta]=[\text{fiber}]^k$ for $\pm k>0$, then $HC_*^{[\eta]}(M,\alpha,\mathbb Q)=\displaystyle{\bigoplus_{j=1}^{n_\pm}} H_*(X^\pm_j,\mathbb Q)$,
 \item\label{C2} if there exist $j\in \{1,\dots,n\}$ and $k'>0$ such that $[\pi(\eta)]=[\Gamma_j]^{k'}$, then $HC_*^{[\eta]}(M,\alpha,\mathbb Q)=\displaystyle{\bigoplus_{I}} H_*(S^1,\mathbb Q)$ where $I=\{i; [\pi(\Gamma_i)]=[\pi(\Gamma_j)]  \}$,
 \item\label{C3} otherwise, $HC_*^{[\eta]}(M,\alpha,\mathbb Q)=0$.
\end{enumerate}
In addition, if Hypothesis H is satisfied, the growth rate of the contact homology $HC(M,\xi,\mathbb Q)$ is quadratic.
\end{theorem}
Here $HC_*^{[\eta]}(M,\alpha,\mathbb Q)$ denotes the cylindrical contact homology restricted to the homotopy class $[\eta]$.
To obtain the whole picture for universally tight positive contact structures on fiber bundles, it remains to compute contact homology of contact structures transverse to the fibers with $0\leq\chi(S,M)\leq-\chi(S)$. Although this has not been dealt with yet, it seems reasonable to work out.
However, Colin and Honda's questions remain out of reach as we have yet a lot to learn about contact structures on hyperbolic manifolds. 
On the other hand, for non-hyperbolic geometries there is already a counterexample as observed above. The following question may provide some alternative way to tackle connections between geometry and periodic Reeb orbits.
\begin{question}
Is the growth rate of contact homology for a given contact structure related to that of the fundamental group of the underlying manifold ? For example, can the growth rate of the fundamental group be an upper bound for the growth rate of contact homology ? (Roughly speaking, in a finitely generated group, the growth rate counts the number of elements that can be written as a product of length $n$.)
\end{question}
\begin{question}
Are there growth rates of contact homology that lie between quadratic and exponential growths ?
\end{question}

This paper is part of the authors's the PhD thesis~\cite{Vaugon11}. It is organized as follows. In Section~\ref{section_contact_homology}, we briefly outline the theory of contact homology. Morse-Bott approach to contact homology is sketched in Section~\ref{section_Morse_Bott}. In Section~\ref{section_croissance}, we give a detailed definition of the growth rate of contact homology. In Section~\ref{section_positivity_intersection} we discuss positivity of intersection for tori foliated by Reeb orbits. We prove Theorem~\ref{theoreme_cloisonne} in Section~\ref{section_polynomial} and Theorem~\ref{theoreme_hyperbolique} in Section~\ref{section_hyperbolic}.

\bigskip
\noindent{\bf{Acknowledgements.}} I am deeply grateful to my advisor, Vincent Colin, for his guidance and support. I would also like to thank Frédéric Bourgeois, Paolo Ghiggini, Patrick Massot, François Laudenbach and Chris Wendl for stimulating and helpful discussions. Thanks also go Jean-Claude Sikorav, Jacqui Espina and an anonymous referee for suggesting numerous improvements and corrections and Marc Mezzarobba for proofreading this text. Last, I am grateful for the hospitality of the Unité de Mathématiques Pures et Appliquées (\textsc{ens} Lyon).

\section{Contact homology}\label{section_contact_homology}
Throughout this paper, we consider only manifolds of dimension $3$ and cooriented contact structures. In this section, we give an overview of contact homology over $\mathbb Q$ which was introduced by Eliashberg, Givental and Hofer~\cite{EGH00}. This is an homology build around Gromov's holomorphic curves~\cite{Gromov85} which he introduced to the symplectic world in 1985. For more details see \cite{EGH00, Bourgeois02}.
The reader is reminded that the existence and invariance of contact homology in full generality is still in progress. We will elaborate a little bit more within this section, one can also refer to~\cite{FFGW12}. We start with reviewing some basics.

\subsection{Almost-complex structures and holomorphic curves}
The \emph{symplectization} of a contact manifold $(M,\xi=\ker(\alpha))$ is the non-compact symplectic manifold $(\mathbb R\times M,\d(e^\tau \alpha)=\omega)$, where $\tau$ is the $\mathbb R$-coordinate.
An \emph{almost complex structure} on a even-dimensional manifold $X$ is a map $J :TX\to TX$ preserving the fibers and such that $J^2=-\Id$. In addition, on a symplectization, $J$ is \emph{adapted} to $\alpha$ if 
$J$ is $\tau$-invariant,
$J\frac{\partial}{\partial \tau}=R_\alpha$,
$J\xi=\xi$ and
$J$ is \emph{compatible} with $\omega$ (i.e. $\omega(\cdot, J\cdot)$ is a Riemannian metric).

We are interested in \emph{$J$-holomorphic curves}. These are curves $u:(\Sigma,j)\to \bb (\mathbb R\times M,J)$ such that $\d u\circ j=J\circ \d u$ where $(\Sigma,j)$ is a Riemannian surface. This equation is called the \emph{Cauchy-Riemann equation}. When $J$ is unspecified we call $u$ a \emph{pseudo-holomorphic} curve. One can refer to \cite{McDuffSalamon04} for more information.

\begin{theorem}[see for instance {\cite[Lemma 2.4.1]{McDuffSalamon04}}]\label{theoreme_zeros_isoles}
Let $U$ be an open subset of a Riemann surface $(S,j)$ and let $(M,J)$ be a manifold with an almost complex structure. Then, the critical points of any non-constant $J$-holomorphic map $u: (U,j)\to (M,J)$ are isolated.
\end{theorem}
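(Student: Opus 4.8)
The plan is to reduce the statement to a local one around a single critical point and then to apply the Carleman similarity principle to the derivative of $u$. We may assume $U$ is connected; by hypothesis $u$ is non-constant. Fix a critical point $z_0\in U$; it suffices to show that $z_0$ is isolated among critical points. First I would choose a holomorphic coordinate $z=s+\mathrm{i}t$ near $z_0$ with $z_0=0$, and a chart on $M$ identifying a neighbourhood of $u(0)$ with an open subset $\Omega\subset\mathbb R^{2n}$ in such a way that $J$ becomes a smooth map $\Omega\to\mathrm{GL}(2n,\mathbb R)$ with $J^2=-\Id$ and $J(u(0))=J_0$, the standard structure. In this chart the equation $\d u\circ j=J\circ\d u$ reads $\partial_s u+J(u)\,\partial_t u=0$, and elliptic regularity makes $u$ smooth. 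Since this equation forces $\partial_t u=J(u)\,\partial_s u$, the point $z_0$ is a critical point of $u$ precisely when $\zeta:=\partial_s u$ vanishes at $z_0$; so it is enough to prove that $\zeta$ has isolated zeros near $0$, provided $\zeta\not\equiv 0$ near $0$.

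Next I would differentiate the equation with respect to $s$. Using $\partial_s[J(u)]=(\partial_k J)(u)\,\partial_s u^k$ (sum over $k$) and $\partial_t u=J(u)\zeta$, one obtains the linear first order system
\[
\partial_s\zeta+J(u)\,\partial_t\zeta+A\,\zeta=0,\qquad A(z)=\bigl[(\partial_k J)(u(z))\,\zeta^k(z)\bigr]J(u(z)),
\]
with $A$ continuous near $0$. This is a Cauchy--Riemann type equation with variable complex structure, which I would standardise by setting $\Psi(z)=\tfrac12\bigl(\Id-J(u(z))J_0\bigr)$: for $z$ near $0$ this is invertible and satisfies $\Psi(z)J_0=J(u(z))\Psi(z)$, $\Psi(0)=\Id$, so $\eta:=\Psi^{-1}\zeta$ solves $\partial_s\eta+J_0\,\partial_t\eta+A'\eta=0$ for a continuous $A'$, a genuine perturbed Cauchy--Riemann equation.

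Now I would invoke the Carleman similarity principle: there are $\delta>0$, a continuous map $\Phi\colon B_\delta\to\mathrm{GL}(n,\mathbb C)$ and a holomorphic map $\tau\colon B_\delta\to\mathbb C^n$ (identifying $(\mathbb R^{2n},J_0)$ with $\mathbb C^n$) with $\eta=\Phi\,\tau$ on $B_\delta$ and $\tau(0)=0$. If $\zeta\not\equiv 0$ near $0$ then $\tau\not\equiv 0$, and since the components of $\tau$ are holomorphic functions of one variable that are not all identically zero, their common zero set is discrete; hence the zeros of $\eta$, equivalently of $\zeta$, equivalently the critical points of $u$, are isolated near $0$. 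It remains to rule out $\zeta\equiv 0$ near $0$: then $\d u$ vanishes on an open set, and applying the similarity principle to $v:=u-u(0)$, which satisfies $\partial_s v+J(u)\,\partial_t v=0$, we write $v=\Phi_1\tau_1$ with $\tau_1$ holomorphic; vanishing of $v$ on an open set forces $\tau_1\equiv 0$ by the identity theorem, hence $v\equiv 0$ and $u$ is constant on $U$, contradicting non-constancy.

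The main obstacle is the Carleman similarity principle itself; the rest --- the local reduction, the chain-rule computation producing the equation for $\zeta$, the conjugation $\Psi$ to standard form, and the elementary fact that a non-zero holomorphic $\mathbb C^n$-valued function of one variable has discrete zero set --- is routine. The similarity principle is obtained by absorbing the anti-complex-linear part of the perturbation into the coefficient to produce an honest $\bar\partial$-equation with an $L^p$ potential ($p>2$), solving it with the Cauchy--Pompeiu transform to get the continuous invertible factor $\Phi$, and observing that the leftover factor is then holomorphic; see \cite{McDuffSalamon04}.
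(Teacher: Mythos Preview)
The paper does not supply a proof of this statement at all: it is quoted as \cite[Lemma~2.4.1]{McDuffSalamon04} and used as a black box. Your argument is essentially the standard proof given in that reference --- local reduction, differentiation of the Cauchy--Riemann equation to obtain a linear perturbed $\bar\partial$-equation for $\zeta=\partial_s u$, conjugation to the standard complex structure, and the Carleman similarity principle to conclude that zeros of $\zeta$ are isolated --- so there is nothing to compare against in the paper itself, and your approach matches the cited source.
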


To define contact homology, we consider pseudo-holomorphic maps $u :(\dot{\Sigma},j)\to \mathbb R\times M$ where $(\dot{\Sigma},j)$ is a punctured Riemannian surface. For example, the simplest non-constant holomorphic maps are trivial cylinders: if $\gamma$ is a $T$-periodic Reeb orbit, the associated \emph{trivial cylinder} is the map
\begin{equation*}
\begin{array}{ccc}
 \bb R\times S^1&\longrightarrow & \mathbb R\times M\\
(s,t)&\longmapsto& (Ts,\gamma(Tt)).
\end{array}
\end{equation*}
Let $x$ be a puncture of $\dot{\Sigma}$ and, for some neighborhood of $x$, choose some polar coordinates $(\rho,\theta)$ centered at $x$.
Such a map $u=(u_{\mathbb R},u_M)$ is called \emph{positively asymptotic} to a $T$-periodic
orbit $\gamma$ in a neighborhood of $x$ if $\lim_{\rho\to 0} u_{\mathbb R}(\rho,\theta)=+\infty$ and $\lim_{\rho\to 0} u_M(\rho,\theta)=\gamma\left(- T\theta\right)$.
Similarly, it is called \emph{negatively asymptotic} to $\gamma$ if $\lim_{\rho\to 0} u_{\mathbb R}(\rho,\theta)=-\infty$ and $\lim_{\rho\to 0} u_M(\rho,\theta)=\gamma\left(+ T\theta\right)$.

Holomorphic curves $u:(\dot{\Sigma},j)\to \bb (R\times M,J)$ with finite Hofer energy are asymptotic to periodic Reeb orbits near each puncture~\cite[Theorem 1.3]{HWZ96}. Recall that the \emph{Hofer energy} $E$ of $u:\dot{\Sigma}\to \bb R\times M$ is
\[E_\alpha(u)=\sup\left\{\int_{\dot{\Sigma}} u^*\d(\phi\alpha), \phi : \bb R\to [0,1], \phi'\geq 0\right\}.\]

The following proposition is used in Section \ref{section_positivity_intersection} to prove the smoothness of the projection of a holomorphic curve on $M$.
\begin{proposition}[{\cite[Theorem 4.1]{HWZ95}}]\label{proposition_tau_im_du}
Let $(M,\xi=\ker(\alpha))$ be a contact manifold and $J$ an almost complex structure on $(\mathbb R\times M,\d (e^\tau\alpha))$ adapted to $\alpha$. Consider the standard complex structure $j$ on $\mathbb R\times S^1$. For every non-constant map $u:(\mathbb R\times S^1,j)\to\mathbb (R\times M,J)$ that is not a trivial cylinder, the points $(s,t)$ such that $\frac{\partial}{\partial \tau}\in\im(\d u(s,t)) $ 
are isolated. 
\end{proposition}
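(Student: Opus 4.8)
The plan is to recognise the ``$\xi$--part'' of the differential of $u$ as a solution of a Cauchy--Riemann type equation and then invoke the Carleman similarity principle, exactly in the spirit of the proof of Theorem~\ref{theoreme_zeros_isoles}, which this proposition refines. Throughout I would use that $u$ has finite Hofer energy, so that, $\alpha$ being non-degenerate, $u$ is asymptotic to a Reeb periodic orbit near each of the two punctures of $\mathbb R\times S^1$; this is the case relevant to the applications in Section~\ref{section_positivity_intersection}.

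Write $u=(a,f)$ with $f\colon\mathbb R\times S^1\to M$, let $\pi_\xi\colon TM\to\xi$ be the projection along $R_\alpha$, and set $V:=\pi_\xi(\partial_s f)\in\Gamma(f^*\xi)$. Unwinding $\d u\circ j=J\circ\d u$ with $J\tfrac{\partial}{\partial\tau}=R_\alpha$ and $J\xi=\xi$ gives $\alpha(\partial_s f)=-\partial_t a$, $\alpha(\partial_t f)=\partial_s a$, and $\pi_\xi(\partial_t f)=JV$; in other words the $\xi$--valued $1$--form $\pi_\xi\circ\d f$ is complex linear and is determined by $V$. I would first reduce the statement to showing that $Z:=\{(s,t):V(s,t)=0\}$ is discrete: if $\tfrac{\partial}{\partial\tau}\in\im(\d u(z))$ then $\d u(z)\neq 0$, so $\im(\d u(z))$ is a $J$--complex line; it contains $\tfrac{\partial}{\partial\tau}$, hence also $J\tfrac{\partial}{\partial\tau}=R_\alpha$, hence equals $\mathbb R\tfrac{\partial}{\partial\tau}\oplus\mathbb R R_\alpha$, so $\partial_s f(z)\in\mathbb R R_\alpha$ and $V(z)=0$. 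Thus the set in the statement is contained in $Z$.

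The crux is to derive a perturbed $\bar\partial$--equation for $V$. I would choose a connection $\nabla$ on $TM$ that is trivial on the line sub-bundle $\mathbb R R_\alpha$ (so $\nabla R_\alpha=0$), preserves $\xi$, and satisfies $\nabla(J|_\xi)=0$ --- for instance the direct sum of the trivial connection on $\mathbb R R_\alpha$ and a unitary connection on $(\xi,J|_\xi,\d\alpha(\cdot,J\cdot))$. Plugging the decomposition $\d f=(f^*\alpha)\,R_\alpha+\pi_\xi\circ\d f$ into the torsion identity $\d^\nabla(\d f)=\pm f^*T^\nabla$, using $\nabla R_\alpha=0$, $\nabla(J|_\xi)=0$, and the vanishing $T^\nabla(R_\alpha,R_\alpha)=0$, and projecting onto $f^*\xi$, one obtains
\[
\nabla_s^\xi V+J\,\nabla_t^\xi V=\mathcal A(z)\,V+\mathcal Q(z,V),
\]
where $\mathcal A$ is a smooth (hence locally bounded) bundle endomorphism built from $\partial a$ and from $T^\nabla$ along $f$, and $|\mathcal Q(z,V)|\le c\,|V|^2$. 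I expect this step to be the main obstacle: a priori the torsion couples the Reeb component $(f^*\alpha)R_\alpha$ of $\d f$ to its $\xi$--component and would produce a source term not controlled by $V$; forcing $R_\alpha$ to be $\nabla$--parallel makes the offending $T^\nabla(R_\alpha,R_\alpha)$ term disappear, which is precisely what keeps the equation ``linear--plus--quadratic'' in $V$.

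Granting this, in a local holomorphic coordinate together with a unitary trivialisation of $(f^*\xi,J|_\xi)$ the section $V$ becomes a function $\sigma\colon U\to\mathbb C$ with $|\partial_{\bar z}\sigma|\le C|\sigma|$ on a small enough disc (the $O(|\sigma|^2)$ term being absorbed). The Carleman similarity principle then yields $\sigma=e^{w}h$ near each point, with $w$ continuous and $h$ holomorphic, so $V$ either vanishes identically near that point or has only isolated zeros there. Since ``$V\equiv 0$ near $z$'' defines a clopen subset of the connected surface $\mathbb R\times S^1$, either $Z$ is discrete --- and then so is the set in the statement --- or $V\equiv 0$ identically. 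Finally I would exclude $V\equiv 0$: then $\d f$ is everywhere valued in $\mathbb R R_\alpha$, so $f$ maps the connected surface into a single Reeb orbit $C$ and $u$ maps into the $J$--holomorphic surface $\mathbb R\times C$, biholomorphic to $\mathbb C$ if $C$ is not closed and to $\mathbb C^*$ if it is; the asymptotic control at the punctures coming from finite energy is incompatible with a non-constant holomorphic map $\mathbb C^*\to\mathbb C$, and in the remaining case forces $u(w)=cw^k$, i.e.\ a trivial cylinder over a cover of $C$ --- contradicting the hypotheses. Hence $Z$ is discrete, which proves the proposition.
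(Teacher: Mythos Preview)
The paper does not give a proof of this proposition; it simply points to McDuff--Salamon, Lemma 2.4.1, whose method (a Carleman similarity principle for a $\bar\partial$-type equation) is exactly what you carry out, adapted to the $\xi$-projection $V=\pi_\xi(\partial_s f)$. Your outline is correct and is the standard argument in this setting, found for instance in the Hofer--Wysocki--Zehnder analysis of finite-energy cylinders.

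Two small remarks. First, your diagnosis of the ``offending term'' is slightly off: $T^\nabla(R_\alpha,R_\alpha)$ vanishes for \emph{any} connection by antisymmetry of the torsion. The genuine danger is the $\xi$-component of $\nabla_{R_\alpha}R_\alpha$, which arises when you differentiate the Reeb part of $\d f$ and would indeed give a source term independent of $V$; your hypothesis $\nabla R_\alpha=0$ kills precisely this, so the equation you write down is correct even though the explanation is misplaced. Second, you are right that some finite-energy (or asymptotic) hypothesis is needed to exclude $V\equiv 0$: without it one may take $u:\mathbb C^*\to\mathbb C^*\simeq\mathbb R\times C$, $z\mapsto e^z$, for $C$ a closed Reeb orbit, which is neither constant nor a trivial cylinder yet has $\partial_\tau\in\im(\d u)$ everywhere. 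Since the paper only invokes the proposition for finite-energy cylinders in Section~\ref{section_positivity_intersection}, your added assumption is both harmless and necessary.
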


\subsection{Full contact homology}

\subsubsection{Periodic orbits}
Let $\gamma$ be a $T$-periodic Reeb orbit of a contact manifold $(M,\xi=\ker(\alpha))$ and let $p\in\gamma$. Denote the Reeb flow by $\phi_t$. The linearized return map preserves the contact structure. Its restriction $\psi_T$ to $\xi_p$ is a symplectomorpism of $(\xi_p,\d\alpha)$. Recall that a non-degenerate periodic orbit $\gamma$ is called \emph{even} if $\psi_T(p)$ has two real positive eigenvalues and \emph{odd} if $\psi_T(p)$ has two complex conjugate or two real negative eigenvalues. Let $\gamma^m$ be the $m$-th multiple of a simple orbit $\gamma^1$. Then $\gamma^m$ is said to be \emph{good} if $\gamma^1$ and $\gamma^m$ have the same parity, otherwise, $\gamma^m$ is said to be \emph{bad}. 

A relative grading of non-degenerate periodic Reeb orbit is given by the \emph{Conley-Zehnder} index. This is a Maslov type index. Additionally, its parity matches with the definitions of odd and even periodic orbits. We refer to \cite{Laudenbach04} for a precise presentation.

\subsubsection{Definition of full contact homology}\label{section_def_HC}
Let $(M,\xi=\ker(\alpha))$ be a contact manifold with a non-degenerate contact form. 
The chain complex $A_*(M,\alpha)$ is the free super-commutative unital $\bb Q$-algebra generated by all good periodic Reeb orbits i.e. the simple periodic orbits and their good multiples. 
Choose an almost complex structure $J$ adapted to $\alpha$. To define the differential $\partial$, consider the moduli space $\mathcal M_{[Z]}(J,\gamma,\gamma'_1,\dots\gamma'_n)$. This is the space of equivalent classes of solutions to Cauchy-Riemann equation that have finite energy, are positively asymptotic to $\gamma$, negatively asymptotic to $\gamma'_1\dots\gamma'_n$ and in the relative homotopy class $[Z]$. By equivalence classes we mean modulo reparametrization of $\dot{\Sigma}$. The $\mathbb R$-translation in $\mathbb R\times M$ induces a $\mathbb R$-action on $\mathcal M_{[Z]}(J,\gamma,\gamma'_1,\dots\gamma'_n) $ (see \cite{Bourgeois03} for more details). The differential counts elements in $\mathcal M_{[Z]}(J,\gamma,\gamma'_1,\dots\gamma'_n)/\bb R$ when this space is $0$-dimensional.
To define a homology, we must have $\partial\circ\partial=0$. We want to apply Morse homology type arguments. However, $\mathcal M_{[Z]}(J,\gamma,\gamma'~,\dots\gamma'_n)/\bb R$ is not a manifold and we have to assume Hypothesis~\ref{H1}. We denote by $\mu_{[Z]}(\gamma,\gamma'_1,\dots\gamma'_n)$ the Conley-Zehnder index of $\gamma$ minus the sum of the Conley-Zehnder indices of $\gamma'_1,\dots\gamma'_n$ where all indices are calculated with respect to a trivialization on $Z$.

\begin{hypotheseh}\label{H1}
There exists an abstract perturbation of the Cauchy-Riemann equation such that $\mathcal M_{[Z]}(J,\gamma,\gamma'~,\dots\gamma'_n)/\bb R$
is a union of branched labeled manifolds with corners and rational weights, having the expected dimension $\mu_{[Z]}(\gamma,\gamma'_1,\dots\gamma'_n)+n-2$.
\end{hypotheseh}

The problems arise when we take into account multiply covered curves. There are many approaches to proving that the relevant moduli spaces indeed have such a structure as in Hypothesis~\ref{H1} due to Fukaya and Ono~\cite{FukayaOno99}, Liu and Tian~\cite{LiuTian98}, Hofer, Wysocki and Zehnder~\cite{Hofer08,HWZ,HWZ11}. There also exist partial transversality results due to Dragnev~\cite{Dragnev04}.

Let $n_{\gamma,\gamma'_1,\dots\gamma'_n}$ denote the signed weighted counts of points in $0$-dimensional components of $\mathcal M_{[Z]}(J,\gamma,\gamma'~,\dots\gamma'_n)/\bb R$ for all relative homology classes $[Z]$~\cite{EGH00,BourgeoisMohnke03}. The signs correspond to the orientation of the moduli space and the weights to the multiplicity of the orbits.
The differential of a periodic orbit $\gamma$ is
\begin{equation*}\label{definition_partial}
\partial \gamma=\sum_{\gamma'_1,\dots,\gamma'_n} \frac{n_{\gamma,\gamma'_1,\dots,\gamma'_n}}{i_1!\dots i_l ! \kappa(\gamma'_1)\dots\kappa(\gamma'_n)}\gamma'_1\dots\gamma'_n.
\end{equation*} 
The sum is taken over all the sets of periodic Reeb orbits and we divide by the overcount: $i_1\dots i_l$ are multiplicities in $\{\gamma'_1\dots\gamma'_n\}$ and $\kappa(\gamma)$ is the multiplicity of $\gamma$. The definition is extended using the graded Leibniz rule.

Hypothesis~\ref{H1} guarantees $\partial\circ\partial=0$. Next, we still want a contact invariant and so we need to change the contact form. We construct a symplectic cobordism between these two forms.
Let $\alpha_1$ and $\alpha_0$ be two non-degenerate, homotopic contact forms. Then there exist $c>0$ and a family $(\alpha_l)_{l\in \bb R}$ such that
\begin{enumerate}
  \item $\displaystyle\lim_{l\to-\infty} \alpha_l=c\alpha_0$,
  \item $\displaystyle\lim_{l\to\infty} \alpha_l=\alpha_1$,
  \item let $\alpha$ denote the $1$-form on $\bb R\times M$ induced by  $(\alpha_l)_{l\in \bb R}$, then $\d\alpha\wedge\d\alpha> 0$.
\end{enumerate}
Choose almost complex structures $J_1$ and $J_0$ on $\bb R\times M$ adapted to $\alpha_1$ and $\alpha_0$. We denote by $J^c$ the almost complex structure such that $J^c_{\vert \xi}=J_{\vert \xi}$ and $J^c\frac{\partial}{\partial \tau}=\frac{R_\alpha}{c}$.
Last, choose an almost complex structure $J$  on $\bb R\times M$ compatible with $\d\alpha$ and interpolating between $J_1$ and $J_0^c$.
Let $\gamma$ be a $R_{\alpha_1}$-periodic orbit and $\gamma'_1,\dots,\gamma'_n$ be $R_{c\alpha_0}$-periodic orbits. Let  $\mathcal M_{[Z]}(J,\gamma,\gamma'_1,\dots\gamma'_n)$ denote the moduli space of $J$-holomorphic curves, positively asymptotic to $\gamma$, negatively asymptotic to $\gamma'_1\dots\gamma'_n$ and in the relative homotopy class $[Z]$. We have to assume Hypothesis~\ref{H2} to obtain a nice structure on these moduli spaces. 

\begin{hypotheseh}\label{H2}
There exists an abstract perturbation of the Cauchy-Riemann equation such that
$\mathcal M_{[Z]}(J,\gamma,\gamma'_1,\dots\gamma'_n)$
is a union of branched labeled manifolds with corners and rational weights, having the expected dimension $\mu_{[Z]}(\gamma,\gamma'_1,\dots\gamma'_n)+n-1$.
\end{hypotheseh}

Assuming Hypothesis~\ref{H2}, there exists a chain map 
\begin{equation*} 
\psi((\alpha_1,J_1),(c\alpha_0,J_0^c)) :(A_*(M,\alpha_1),\partial_{J_1})\to(A_*(M,c\alpha_0),\partial_{J_0^c})
\end{equation*}
similar to $\partial$ and counting elements in $\mathcal M_{[Z]}(J,\gamma,\gamma'_1,\dots\gamma'_n)$ when this space is $0$-dimensional.

The induced map in homology
\begin{equation*}
\Psi((\alpha_1,J_1),(c\alpha_0,J_0^c)) :HC_*(M,\alpha_1,J_1)\to HC_*(M,c\alpha_0,J_0^c)
\end{equation*}
does not depend on $\alpha_l$ or $J$. The map $\Psi$ is the key ingredient to obtain invariance of contact homology.

\begin{hypotheseH}
Hypothesis H is the union of Hypotheses~\ref{H1} and~\ref{H2}.
\end{hypotheseH}

\begin{theorem}[Eliashberg-Givental-Hofer]\label{theoreme_partial_2} Let $(M,\xi)$ be a closed contact manifold and $\alpha$ be a non-degenerate contact form. Under Hypothesis H,
\begin{enumerate}
 \item $\partial^2=0$,
 \item the associated homology $HC_*(M,\xi)$ does not depend on the choice of the contact form, complex structure and abstract perturbation. 
 \end{enumerate}
\end{theorem}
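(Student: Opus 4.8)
The plan is to extract both assertions from the structure of the compactified moduli spaces $\overline{\mathcal M}_{[Z]}(J,\gamma,\gamma'_1,\dots\gamma'_n)$ granted by Hypothesis~H, following the scheme of \cite{EGH00}. The vanishing $\partial^2=0$ will come from the boundary of the $1$-dimensional components of these spaces, and invariance from counting rigid curves in exact symplectic cobordisms interpolating between symplectisations.

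\emph{Vanishing of $\partial^2$.} Fix $J$ and an abstract perturbation as in Hypothesis~H and consider a component of $\overline{\mathcal M}_{[Z]}(J,\gamma,\gamma'_1,\dots\gamma'_n)$ of dimension~$1$; by Hypothesis~H it is a compact branched labeled $1$-manifold with boundary carrying rational weights. The SFT compactness theorem \cite{BEHWZ03} together with a gluing statement identifies its boundary with the set of two-level broken buildings: a rigid curve from $\gamma$ to an intermediate word of orbits, stacked over a rigid curve from one letter of that word down to a subword of $\{\gamma'_1,\dots,\gamma'_n\}$, the $\bb R$-action being quotiented on each level. Summing the signed weighted number of boundary points over all $[Z]$ and all tuples of negative ends gives $0$. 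It then remains to check that this sum is precisely the coefficient of the corresponding monomial in $\partial^2\gamma$: the combinatorial factors $1/(i_1!\cdots i_l!\,\kappa(\gamma'_1)\cdots\kappa(\gamma'_n))$, together with the extension of $\partial$ by the graded Leibniz rule, are arranged so that breaking at the top (the outer $\partial$) and breaking at the bottom (differentiating one letter of $\partial\gamma$) account exactly for the two endpoints of each arc with matching multiplicities, and so that curves through bad orbits cancel in pairs. Signs are controlled by a coherent system of orientations of the moduli spaces as in \cite{BourgeoisMohnke03}. Hence $\partial^2=0$.

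\emph{Invariance.} Let $(\alpha_0,J_0)$ and $(\alpha_1,J_1)$ be two non-degenerate contact forms for $\xi$ with adapted almost complex structures. Choose an exact symplectic cobordism $(W,\omega)$ with positive cylindrical end modelled on $(\bb R_{\geq 0}\times M,\alpha_1)$ and negative cylindrical end modelled on $(\bb R_{\leq 0}\times M,\alpha_0)$, together with an $\omega$-compatible almost complex structure on $W$ that is cylindrical near the ends. Counting with signs and weights the rigid finite-energy curves in $W$ defines an algebra morphism $\Phi:(A_*(M,\alpha_0),\partial_0)\to(A_*(M,\alpha_1),\partial_1)$; analysing the $1$-dimensional moduli spaces in $W$, whose boundary consists of a curve in $W$ with a broken configuration glued onto the positive or the negative end, shows that $\Phi$ is a chain map, under a cobordism version of Hypothesis~H. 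One then verifies, by the same boundary analysis applied to families of cobordisms: (i) concatenation of cobordisms induces, up to chain homotopy, the composition of the associated morphisms (neck-stretching); (ii) a smooth path of cobordisms induces a chain homotopy between the endpoint chain maps, the operator $K$ counting the rigid curves appearing in the total space of the parametrised moduli problem, so that $\Phi'-\Phi=\partial_1 K+K\partial_0$; (iii) the trivial cobordism induces the identity. Deforming the concatenation of a cobordism from $\alpha_0$ to $\alpha_1$ with one from $\alpha_1$ to $\alpha_0$ into a trivial cobordism then exhibits $\Phi$ as a quasi-isomorphism; independence of the abstract perturbation follows from the same parametrised argument performed inside a single symplectisation.

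The genuine difficulty is exactly what Hypothesis~H postulates: transversality for multiply-covered curves and the matching compactness and gluing results, for symplectisations, for cobordisms, and in one-parameter families — without these the boundary of a $1$-dimensional moduli space need not even be a finite weighted set of broken buildings, and the compactified spaces of the perturbed equation need not have the corner structure used above. Once Hypothesis~H (and its cobordism and parametrised analogues) is in force, what remains is bookkeeping: coherent orientations and the ensuing sign computation, the identification of boundary strata with monomials through the factorials $i_k!$ and the multiplicities $\kappa(\gamma)$, and the verification that bad orbits may be discarded. This is delicate but not conceptually hard, which is precisely why the statement is only conditional.
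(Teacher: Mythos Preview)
The paper does not actually prove this theorem: it is stated as a foundational result of \cite{EGH00}, conditional on Hypothesis~H, and no argument is given in the paper beyond the description of the chain complex and, in the following subsection, of the cobordism maps used for invariance. Your sketch is the standard one from the SFT literature and is consistent with what the paper later uses (the maps $\psi$, $\Psi$ of Theorem~\ref{theoreme_Phi_0_1}), so in that sense there is nothing to compare.

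One slip worth fixing: with your cobordism having positive end $(\alpha_1,J_1)$ and negative end $(\alpha_0,J_0)$, rigid curves have their positive punctures asymptotic to $\alpha_1$-orbits and their negative punctures to $\alpha_0$-orbits, so the induced algebra map goes $\Phi:(A_*(M,\alpha_1),\partial_1)\to(A_*(M,\alpha_0),\partial_0)$, not the reverse. This is exactly the direction the paper uses in Section~\ref{section_contact_homology}.2.2. It does not affect the logic of your outline, but as written your chain-map and chain-homotopy identities have the wrong source and target.
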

One can consult \cite{Bourgeois09} for a sketch of proof.
If $\partial^2=0$ for some contact form $\alpha$, we denote $HC_*(M,\alpha,J)$ the associated homology.
Some computations were carried out by Bourgeois and Colin \cite{BourgeoisColin05} to distinguish toroidal irreducible $3$-manifolds, Ustilovsky \cite{Ustilovsky99} to prove the existence of exotic contact structures on spheres and Yau \cite{Yau06} who proved that the contact homology of overtwisted contact structures is trivial. Bourgeois \cite{Bourgeois02} provided other computations using Morse-Bott approach to contact homology.

\begin{remark}
Under Hypothesis~\ref{H1}, it is reasonable for us to expect the following. Suppose all the images of $J$-holomorphic buildings positively asymptotic to $\gamma$, negatively asymptotic to $\gamma'_1\dots\gamma'_n$ are contained in an open set $U$ in $ \mathbb R\times M$. Then $U$ contains the images of all solutions of perturbed Cauchy-Riemann equations that have the same asymptotics for all abstract perturbations. Roughly speaking, holomorphic buildings are glued holomorphic curves and these are defined in~\cite{BEE09} in more details. We will only apply this result when the holomorphic buildings are holomorphic cylinders. 
\end{remark}

\subsubsection{Composition of cobordism maps}\label{section_contact_form}
Consider the special case of proportional contact forms $\alpha$ and $c\alpha$ for $c>0$. Let $J$ be an almost complex structure on $\mathbb R\times M$ adapted to $\alpha$. Then we have the diffeomorphism
\begin{equation*}
\begin{array}{cccc}
\phi_c:& \bb R\times M&\longrightarrow &\bb R\times M \\
&(\tau,x)&\longmapsto& (c\tau,x)
\end{array}
\end{equation*}
which sends a $J$-holomorphic curve to a $J^c$-holomorphic curve. The identification of geometric periodic Reeb orbits induces an isomorphism
\begin{equation*} 
\theta(\alpha,J,c) :(A_*(M,\alpha),\partial_{J})\to (A_*(M,c\alpha),\partial_{J^c}).
\end{equation*}
Let
\begin{equation*} 
\Theta(\alpha,J,c) :HC(M,\alpha,\partial_{J})\to HC_*(M,c\alpha,\partial_{J^c})
\end{equation*}
 denote the induced map on homology.
The maps $\Psi$ and $\Theta$ have natural composition properties.

\begin{theorem}[Eliashberg-Givental-Hofer]\label{theoreme_Phi_0_1}
Let $(\alpha_i, J_i)$ be contact forms and adapted almost complex structures on the symplectization of a closed manifold for $i=0,1,2$.
Under Hypothesis~H, if there exist cobordisms as defined in Section~\ref{section_def_HC} between $(\alpha_2,J_2)$ and $(\alpha_1,J_1)$ and between $(\alpha_1,J_1)$ and $(\alpha_0,J_0)$, then \[\Psi((\alpha_2,J_2),(\alpha_0,J_0))=\Psi((\alpha_1,J_1),(\alpha_0,J_0))\circ\Psi((\alpha_2,J_2),(\alpha_1,J_1)).\]
\end{theorem}

\begin{proposition}\label{theoreme_Phi_0_1_1}
Let $(\alpha_i, J_i)$ be contact forms and adapted almost complex structures on the symplectization of a closed manifold for $i=0,1$.
Under Hypothesis~H,
\begin{enumerate}
  \item\label{prop_c_positif} for all $c>0$, we have the following \[\Theta(\alpha_0,J_0,c)\circ\Psi((\alpha_1,J_1),(\alpha_0,J_0))=\Psi((c\alpha_1,J^c_1),(c\alpha_0,J^c_0))\circ\Theta(\alpha_1,J_1,c),\]
  \item\label{prop_c_et_theta} if $c<1$, one can choose $\psi((\alpha_0,J_0),(c\alpha,J_0^c))=\theta(\alpha,J_0,c)$.
\end{enumerate}
\end{proposition}

\begin{proof}[Sketch of proof]
(\ref{prop_c_positif}) Denote by $\alpha_l$ and $J$ the homotopy and almost complex structure used to define $\psi((\alpha_1,J_1),(\alpha_0,J_0))$. Consider the homotopy $c\alpha_\frac{l}{c}$ and the almost complex structure $J^c=\phi_* J$ where $\phi :(\tau,x)\mapsto (c\tau,x)$. Then $\phi$ sends $J$-holomorphic curves to $J^c$-holomorphic curves.

(\ref{prop_c_et_theta}) Consider the homotopy $\alpha_l=c(l)\alpha_0$ between $c\alpha_0$ and $\alpha_0$ where $c$ is a non-decreasing function. Let $J_0$ be an almost complex structure adapted to $\alpha_0$ and $C$ be an anti-derivative of $c$. The almost complex structure $J=\phi_*J_0$ where  $\phi:(\tau,x)\longmapsto (C(\tau),x)$
is adapted to $\alpha_l$. Then $\phi$ sends $J_0$-holomorphic curves to $J$-holomorphic-curves and the $J_0$-holomorphic curves used to define \[\psi((\alpha_0, J_0),(\alpha_0, J_0))\] are trivial cylinders.
\end{proof}

\subsection{Cylindrical contact homology}
Let $(M,\xi=\ker(\alpha))$ be a closed contact manifold and assume $\alpha$ is a non-degenerate and hypertight. The chain complex $C^\t{cyl}_*(M,\alpha)$ of \emph{cylindrical contact homology} is the $\bb Q$-vector space generated by good periodic Reeb orbits associated to the form $\alpha$. Choose an almost complex structure $J$ on $\mathbb R\times M$ adapted to $\alpha$. The cylindrical differential $\partial^\t{cyl}$ counts $J$-holomorphic rigid cylinders in the moduli space $\mathcal M_{[Z]}(J,\gamma,\gamma')/\mathbb R$ defined in Section~\ref{section_def_HC}.
The differential of a periodic orbit $\gamma$ is
\begin{equation*}
\partial^\t{cyl} \gamma=\sum_{\gamma'} \frac{n_{\gamma,\gamma'}}{\kappa(\gamma')}\gamma'
\end{equation*}
where the sum runs over $\gamma'$ such that $\gamma$ and $\gamma'$ are of index difference $1$ and $n_{\gamma,\gamma'}$ and $\kappa$ are defined in Section~\ref{section_def_HC}.
As we do in the full contact homology version, we assume Hypothesis H for a nice structure on the moduli spaces.

\begin{theorem}[Eliashberg-Givental-Hofer]
Let $(M,\xi)$ be a closed contact manifold and $\alpha$ be a non-degenerate hypertight contact form. Under Hypothesis H,
\begin{enumerate}
 \item $(\partial^\t{cyl})^2=0$;
 \item the associated homology $HC^{cyl}_*(M,\xi)$ does not depend on the choice of an hypertight contact form $\alpha$, an almost complex structure $J$ and an abstract perturbation.
 \end{enumerate}
\end{theorem}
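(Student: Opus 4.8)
The plan is to run the standard Floer-theoretic machine, with hypertightness of $\alpha$ playing the single contact-geometric role of excluding holomorphic planes. \emph{Well-definedness of the differential.} First I would check that, for a generic adapted $J$ — or, where transversality fails for multiply covered cylinders, for a generic abstract perturbation supplied by Hypothesis~H — each index $0$ moduli space $\overline{\mathcal M}_{[Z]}(J,\gamma,\gamma')/\mathbb R$ is a compact $0$-dimensional branched labeled manifold, hence a finite weighted set, so that the coefficients $n_{\gamma,\gamma'}$ are defined. Compactness is the SFT compactness theorem of \cite{BEHWZ03}: a sequence of cylinders from $\gamma$ to $\gamma'$ degenerates only to a holomorphic building of arithmetic genus $0$ with one positive and one negative puncture, and since $\alpha$ is hypertight there are no contractible Reeb orbits and hence no holomorphic planes, so each level of the building is — up to trivial cylinders — a single non-trivial cylinder; a dimension count then shows that a building with at least two non-trivial levels has index $\geq 1$. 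Finiteness of the sum defining $\partial\gamma$ follows from the action filtration: by Stokes' theorem a non-trivial cylinder from $\gamma$ to $\gamma'$ has $\mathcal A(\gamma')<\mathcal A(\gamma)$, and non-degeneracy of $\alpha$ bounds the number of orbits below a given action. The restriction to good orbits is what makes the signed count independent of the auxiliary orientation choices, bad orbits occurring in cancelling pairs.

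\emph{The relation $\partial^2=0$.} Next I would fix $\gamma$, $\gamma''$ and a relative homotopy class and consider the index $1$ moduli space $\overline{\mathcal M}(J,\gamma,\gamma'')/\mathbb R$; under Hypothesis~H it is a $1$-dimensional branched labeled manifold with boundary. Its boundary is, by the gluing theorem, in weighted bijection with the two-level broken cylinders $\gamma\to\gamma'\to\gamma''$, that is, with the terms contributing to $\langle\partial^2\gamma,\gamma''\rangle$; here again hypertightness rules out the only competing codimension-$1$ stratum, namely a cylinder with a plane attached. Since a compact $1$-manifold has signed boundary count zero, this gives $\partial^2\gamma=0$ for every generator, and the identity extends to the whole complex by linearity.

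\emph{Invariance.} Given two non-degenerate hypertight forms $\alpha_0$, $\alpha_1$ for $\xi$ with adapted $J_0$, $J_1$, I would first reduce to the case of nearby forms using the scaling isomorphism $\theta(\alpha,J,c)$ between $(C^{\mathrm{cyl}}_*(M,\alpha),\partial_J)$ and $(C^{\mathrm{cyl}}_*(M,c\alpha),\partial_{J^c})$, exactly as in the full contact homology case, and then build a monotone homotopy $(\alpha_t)$ together with an interpolating almost complex structure on $\mathbb R\times M$. Counting index $0$ holomorphic cylinders in this cobordism defines a chain map $\Phi\colon (C^{\mathrm{cyl}}_*(M,\alpha_1),\partial_{J_1})\to(C^{\mathrm{cyl}}_*(M,\alpha_0),\partial_{J_0})$, the chain-map identity being again the signed boundary count of the index $1$ cobordism moduli spaces, whose only codimension-$1$ degenerations — by hypertightness — are breakings off a symplectization cylinder at the top or at the bottom. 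Repeating the construction for the reverse cobordism and invoking the composition and homotopy-invariance properties, proved as in Theorem~\ref{theoreme_Phi_0_1} by stacking cobordisms and a homotopy-of-homotopies argument, one finds that the two composites of $\Phi$ with the chain map of the reverse cobordism are chain homotopic to the respective identities, so $\Phi$ is a quasi-isomorphism and $HC^{\mathrm{cyl}}_*(M,\xi)$ is independent of $\alpha$, $J$ and the abstract perturbation.

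\emph{Main obstacle.} The analytic core — and the reason for the blanket appeal to Hypothesis~H — is transversality and gluing for multiply covered holomorphic cylinders, together with their broken and cobordism analogues, which cannot be obtained from a generic $J$ alone and require the abstract perturbation framework; one must also know that the resulting compactified spaces are branched labeled manifolds with corners of the expected dimension and that the perturbations are compatible with breaking and gluing. Granting Hypothesis~H, the genuinely new geometric ingredient compared with the full contact homology case is only the absence of contractible Reeb orbits: this is precisely what keeps planes from bubbling off and thus guarantees that every compactified moduli space entering the argument consists solely of (broken) cylinders, which is what makes all the counting arguments above close up.
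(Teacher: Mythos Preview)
The paper does not prove this theorem: it is stated as a result attributed to Eliashberg--Givental--Hofer, recorded under Hypothesis~H, and immediately followed by the discussion of partial cylindrical contact homology without any argument. There is therefore no proof in the paper to compare against.

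That said, your sketch is exactly the standard argument one would give, and it is correct at the level of an outline. Your identification of the key contact-geometric input --- that hypertightness rules out holomorphic planes, so that SFT compactness for cylinders produces only broken cylinders --- is precisely the point; the rest is the usual Floer package (compactness, gluing, cobordism chain maps, homotopy of homotopies), with Hypothesis~H supplying the transversality and the branched-manifold structure for multiply covered curves. You correctly flag this last part as the genuine analytic obstacle, and indeed the paper makes the same disclaimer throughout: the theorem is asserted only under Hypothesis~H, and the paper's unconditional results are obtained by restricting to primitive free homotopy classes where Dragnev's transversality applies.

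One minor remark: in the invariance step you assume both $\alpha_0$ and $\alpha_1$ are hypertight. This is needed for the cobordism argument to stay within the cylindrical theory (no planes in the compactification at either end), and is consistent with how the paper uses the result; but it is worth being explicit that invariance here is only amongst hypertight forms, not arbitrary non-degenerate ones.
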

One can consult \cite{Bourgeois09} for a sketch of proof.
We may however restrict cylindrical contact homology to a subset of homotopy classes so that Hypothesis H is satisfied. The differential of cylindrical contact homology respects homotopy classes so we can always restrict it. If $\Lambda$ is a set of free homotopy classes of $M$ we call \emph{partial cylindrical homology restricted to $\Lambda$} the homology of the chain complex $(C^\Lambda_*(M,\alpha),\partial)$ where $C^\Lambda_*(M,\alpha)$ is generated by good periodic Reeb orbits in $\Lambda$.
If $\Lambda$ contains only primitive free homotopy classes, it is shown in~\cite[Corollary~1.9]{Dragnev04} that for a generic almost complex structure, the partial contact homology $HC_*^\Lambda(M,\alpha,J)$ is well defined and does not depend on the choice of $J$ or of a hypertight non-degenerate form. 

\begin{fact}
The morphisms from Theorem \ref{theoreme_Phi_0_1} induce morphisms $\psi^\t{cyl}$ and $\Psi^\t{cyl}$ on the cylindrical contact homology complex and on cylindrical contact homology with similar properties.
\end{fact}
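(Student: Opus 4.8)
The plan is to redo, with punctured spheres replaced by cylinders, the constructions of $\theta$, $\psi$, $\Theta$, $\Psi$ underlying Theorem~\ref{theoreme_Phi_0_1}, and to check that every structural relation survives the restriction to the cylindrical subcomplex. I would work first with the partial complex $C^\Lambda_*$ for a set $\Lambda$ of \emph{primitive} free homotopy classes (under Hypothesis~H one may afterwards take $\Lambda$ to be all of them and replace $C^\Lambda_*$ by the full cylindrical complex). The scaling maps are immediate: $\phi_c(\tau,x)=(c\tau,x)$ carries $J$-holomorphic cylinders to $J^c$-holomorphic cylinders and geometric orbits to geometric orbits, hence restricts to a chain isomorphism $\theta^\t{cyl}(\alpha,J,c)\colon(C^\Lambda_*(M,\alpha),\partial_J)\to(C^\Lambda_*(M,c\alpha),\partial_{J^c})$, with homology-level map $\Theta^\t{cyl}$. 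For the interpolation maps, given non-degenerate hypertight $\alpha_1$ and $c\alpha_0$ joined by a homotopy $(\alpha_t)$ and an almost complex structure $J$ on $\mathbb R\times M$ as in the definition of $\psi$, I would set $\psi^\t{cyl}((\alpha_1,J_1),(c\alpha_0,J_0^c))$ to be the linear map counting, with sign and weight, the rigid $J$-holomorphic \emph{cylinders} positively asymptotic to a generator $\gamma\in C^\Lambda_*(M,\alpha_1)$ and negatively asymptotic to a single orbit $\gamma'\in\Lambda$ --- equivalently, the ``linear part'' of $\psi$ restricted to $\Lambda$.

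Transversality should cause no trouble in this range: a cylinder with both asymptotic orbits in a primitive class is somewhere injective (a nontrivial branched cover would force a non-primitive asymptotic orbit), so Dragnev's argument \cite[Corollary~1]{Dragnev04} applies to the cobordism and yields, for generic $J$, transversally cut out moduli spaces of expected dimension; in particular Hypothesis~H is not needed anywhere here. The chain map relation $\partial^\t{cyl}\circ\psi^\t{cyl}=\psi^\t{cyl}\circ\partial^\t{cyl}$ then follows from SFT compactness of the $1$-dimensional moduli space of cobordism cylinders with asymptotics in $\Lambda$. A broken configuration in its boundary has total arithmetic genus $0$ with exactly one positive and one negative puncture; since $\d(e^\tau\alpha)$ is exact there are no closed components, and since $\alpha_0$, $\alpha_1$ are hypertight there are no holomorphic planes (the asymptotic orbit of a plane would be a contractible orbit of $\alpha_0$ or $\alpha_1$). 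Counting punctures, every level must then be a single cylinder, exactly one of them lying in the cobordism and the others in the symplectisations of $\alpha_1$ (above) and $c\alpha_0$ (below); the two types of two-level configurations are the two ends of the $1$-manifold and give the two sides of the identity. The usual parametrised (cobordism-of-cobordisms) argument, with the same exclusions, then shows that the induced $\Psi^\t{cyl}$ on homology is independent of $(\alpha_t)$ and $J$.

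The remaining assertions of Theorem~\ref{theoreme_Phi_0_1} transfer in the same manner. Functoriality is proved by concatenating two cobordisms and stretching the neck: a rigid glued cobordism cylinder degenerates to a cobordism cylinder for the first cobordism on top of one for the second, with no spurious plane or sphere bubbles by exactness and hypertightness. The compatibility $\Theta^\t{cyl}(\alpha_0,J_0,c)\circ\Psi^\t{cyl}((\alpha_1,J_1),(\alpha_0,J_0))=\Psi^\t{cyl}((c\alpha_1,J^c_1),(c\alpha_0,J^c_0))\circ\Theta^\t{cyl}(\alpha_1,J_1,c)$, and the normalisation $\psi^\t{cyl}((\alpha,J),(c\alpha,J^c))=\theta^\t{cyl}(\alpha,J,c)$ for $c<1$, follow from the proofs given for Theorem~\ref{theoreme_Phi_0_1}: the diffeomorphisms $\phi$ used there and the trivial cylinders they produce respect the cylindrical subcomplex.

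The delicate point throughout --- and the one I expect to be the main obstacle --- is the compactness bookkeeping: one must rule out, in the limit of a family of cobordism cylinders, the appearance of a component with at least two negative punctures or of a holomorphic plane, since either would take the configuration out of the cylindrical complex and break the argument. This is precisely what the genus-zero, one-positive-one-negative-puncture constraint, exactness of the cobordism and hypertightness of the two end forms forbid; the same remark also shows that the argument stays within $\Lambda$, each negative asymptotic orbit of a cobordism cylinder being freely homotopic to its positive one.
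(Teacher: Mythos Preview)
The paper does not prove this statement at all: it is stated as a \emph{Fact} with no accompanying argument, presumably because the construction is a routine variant of the full-homology case once hypertightness is assumed. Your sketch therefore goes well beyond what the paper provides, and the argument you outline is correct and standard: the scaling maps restrict tautologically, the cobordism map is defined by counting rigid cylinders, and the key compactness bookkeeping (no planes by hypertightness, no closed components by exactness, hence every broken limit of a cylinder is a chain of cylinders) is exactly what makes the cylindrical theory close up. Your remark that for primitive $\Lambda$ one avoids Hypothesis~H via Dragnev's somewhere-injectivity argument is also in line with the paper's own discussion just before the Fact.
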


\subsection{Linearized contact homology}\label{section_lch}

Cylindrical contact homology is a special case of linearized contact homology. Introduced in Chekanov's work on Legendrian contact homology \cite{Chekanov02}, linearized contact homology was generalized to contact homology by Bourgeois, Ekholm and Eliashberg~\cite{BEE09}. Linearization in SFT also appears in Cieliebak and Latschev's work~\cite{CL09}. One can also refer to \cite{Bourgeois09,ColinHonda08}.

\begin{definition}
An \emph{augmentation} $\epsilon : (A,\partial)\to (\bb Q,0)$ is a $\bb Q$-algebra homomorphism that is also a chain map.
\end{definition}

An augmentation $\epsilon$ in $(A,\partial)$ gives a ``change of coordinates'' $a\mapsto \overline{a}=a-\epsilon(a)$. Let $(A^\epsilon(M,\alpha),\partial^\epsilon)$ denote the new chain complex and write $\partial^\epsilon=\partial^\epsilon_1+\partial^\epsilon_2+\dots $ using the filtration by word length. In particular $\partial^\epsilon_0=0$.
\begin{proposition}
If~$\epsilon$ is an augmentation, then $(\partial^\epsilon_1)^2=0$.
\end{proposition}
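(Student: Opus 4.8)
The plan is to realise $\partial^\epsilon$ as a conjugate of $\partial$ and then to deduce $(\partial^\epsilon_1)^2=0$ from $(\partial^\epsilon)^2=0$ by passing to the associated graded of the word-length filtration.

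First I would make the change of coordinates explicit: the assignment $\gamma\mapsto\bar\gamma=\gamma-\epsilon(\gamma)$ on the generators (simple orbits and their good multiples) extends to an algebra automorphism $\Phi_\epsilon$ of $A(M,\alpha)$, with inverse given on generators by $\gamma\mapsto\gamma+\epsilon(\gamma)$, and by construction $\partial^\epsilon=\Phi_\epsilon\circ\partial\circ\Phi_\epsilon^{-1}$. Two facts follow at once. Since $\partial^2=0$ by Theorem~\ref{theoreme_partial_2} (under Hypothesis~H), $(\partial^\epsilon)^2=\Phi_\epsilon\,\partial^2\,\Phi_\epsilon^{-1}=0$. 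And since the conjugate of a graded derivation by an algebra automorphism is again a graded derivation, $\partial^\epsilon$ is a derivation, hence determined by its values on generators.

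Next I would introduce the filtration $F^pA\subset A(M,\alpha)$ spanned by words of length at least $p$; it is a decreasing filtration by ideals with $F^0A=A$ and with a canonical identification of $\operatorname{gr}^1A:=F^1A/F^2A$ with the $\mathbb Q$-vector space $C_*$ freely generated by the good Reeb orbits. Writing $\partial^\epsilon=\sum_{k\geq 0}\partial^\epsilon_k$ as in the statement, the component $\partial^\epsilon_k$ shifts word length by $k-1$: on a generator it yields words of length $k$, and on a word of length $p$ the graded Leibniz rule replaces one letter by its length-$k$ image, giving length $p+k-1$. Since $\epsilon$ is a chain map we are given $\partial^\epsilon_0=0$, so $\partial^\epsilon$ shifts word length only by amounts $\geq 0$; that is, $\partial^\epsilon(F^pA)\subset F^pA$. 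Thus $\partial^\epsilon$ is a filtered derivation, it induces a differential on $\operatorname{gr}^\bullet A$, and on $\operatorname{gr}^1A=C_*$ this induced differential is precisely $\partial^\epsilon_1$.

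Finally, $(\partial^\epsilon)^2=0$ descends to the associated graded, which already gives $(\partial^\epsilon_1)^2=0$ on $C_*$; equivalently, on a generator $\gamma$ the composite $\partial^\epsilon_a\partial^\epsilon_b\gamma$ lands in word length $a+b-1$, so the word-length-$1$ part of $(\partial^\epsilon)^2\gamma$ can only receive contributions from $(a,b)\in\{(1,1),(2,0),(0,2)\}$, the $(2,0)$ term vanishes because $\partial^\epsilon_0\gamma$ is a scalar annihilated by the derivation $\partial^\epsilon_2$, and the $(0,2)$ term vanishes because $\partial^\epsilon_0=0$, leaving $(\partial^\epsilon_1)^2\gamma=\bigl[(\partial^\epsilon)^2\gamma\bigr]_{\text{length }1}=0$. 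This is exactly where the hypothesis that $\epsilon$ is a genuine augmentation, not merely an algebra homomorphism, is used: the chain-map property $\epsilon\circ\partial=0$ is what forces $\partial^\epsilon_0=0$ and hence kills the $(0,2)$ contribution. There is no analytic content here beyond $\partial^2=0$; the one step that deserves care is the precise bookkeeping of how $\partial^\epsilon_k$ shifts word length in the super-commutative algebra, so that the filtration property $\partial^\epsilon(F^pA)\subset F^pA$ is correctly obtained from the graded Leibniz rule.
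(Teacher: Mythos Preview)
Your argument is correct and is precisely the standard one: conjugate $\partial$ by the algebra automorphism $\Phi_\epsilon$, use $\partial^2=0$ to get $(\partial^\epsilon)^2=0$, observe that $\epsilon\circ\partial=0$ forces $\partial^\epsilon_0=0$ so that $\partial^\epsilon$ respects the word-length filtration, and then read off $(\partial^\epsilon_1)^2=0$ as the length-$1$ component of $(\partial^\epsilon)^2$ on a generator. One small simplification: since the ``change of coordinates'' $\gamma\mapsto\bar\gamma$ is just a renaming of generators inside the same algebra, $\partial^\epsilon$ is literally the same derivation as $\partial$ expressed in the new generators, so you do not really need the conjugation language to see that it remains a graded derivation.

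As for comparison with the paper: there is nothing to compare. The paper does not prove this proposition; it merely records it, attributed to Bourgeois--Ekholm--Eliashberg, immediately after noting that $\partial^\epsilon_0=0$. Your write-up supplies exactly the algebraic verification the paper omits, and depends on Hypothesis~H only through the input $\partial^2=0$, which is consistent with how the paper frames the whole section.
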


Let $(M,\xi=\ker(\alpha))$ be a contact manifold with a non-degenerate contact form and $\epsilon$ be an augmentation of $A_*(M,\alpha)$. 
The \emph{linearized contact homology} with respect to $\epsilon$, $HC^\epsilon(M,\alpha,J)$, is the homology of $(A^\epsilon_*(M,\alpha),\partial^\epsilon_1)$ where $A^\epsilon_*(M,\alpha) $ is the $\bb Q$-vector space generated by $\{\overline{\gamma}, \gamma \text{ good period orbit}\}$.

\begin{proposition}\label{proposition_trivial_augmentation}
Under Hypothesis H, if the contact form $\alpha$ is hypertight, the complex $A_*(M,\alpha)$ admits the trivial augmentation. The linearized contact homology is then the cylindrical contact homology.
\end{proposition}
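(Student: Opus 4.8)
The plan is to take for $\epsilon_0$ the \emph{trivial augmentation}: the $\mathbb Q$-algebra homomorphism $A_*(M,\alpha)\to\mathbb Q$ with $\epsilon_0(\gamma)=0$ for every good Reeb periodic orbit $\gamma$, so that $\epsilon_0(1)=1$ and $\epsilon_0$ vanishes on every monomial of word length $\geq 1$. The only thing to check is that $\epsilon_0$ is a chain map, i.e.\ $\epsilon_0\circ\partial=0$. Since $\epsilon_0$ is multiplicative and $\partial$ is a graded derivation, $\epsilon_0\circ\partial$ vanishes on $A_*(M,\alpha)$ as soon as it vanishes on generators, so it suffices to show $\epsilon_0(\partial\gamma)=0$ for each orbit $\gamma$. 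Writing $\partial\gamma=n^0_\gamma\cdot 1+(\text{monomials of word length}\geq 1)$, where $n^0_\gamma=\sum_{[Z]}n_\gamma^{[Z]}$ is the signed weighted count of rigid elements of the moduli spaces $\overline{\mathcal M}_{[Z]}(J,\gamma)$ of (perturbed) finite-energy holomorphic planes positively asymptotic to $\gamma$, and applying $\epsilon_0$, all terms of word length $\geq 1$ die and we are left with $\epsilon_0(\partial\gamma)=n^0_\gamma$. So everything reduces to the vanishing of this constant term.

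To get $n^0_\gamma=0$ I would argue there is nothing to count. A finite-energy $J$-holomorphic plane $u\colon\mathbb C\to\mathbb R\times M$ positively asymptotic to $\gamma$ restricts on $\{|z|\le R\}$ to a map whose boundary loop is $C^\infty$-close to a reparametrization of $\gamma$ for $R$ large; hence $\gamma$ bounds a disk in $\mathbb R\times M\simeq M$ and is contractible. As $\alpha$ is hypertight this cannot happen, so there is no relative class $[Z]$ of a capping disk for $\gamma$, the spaces $\overline{\mathcal M}_{[Z]}(J,\gamma)$ are all empty, and $n^0_\gamma=0$. Equivalently: when $\alpha$ is hypertight, $\partial$ has no constant term at all and the trivial map is automatically an augmentation. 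The one point that deserves care — and, in my view, the crux of the statement — is that the relative-homotopy-class label built into the definition of $\overline{\mathcal M}_{[Z]}$, which is exactly what makes this argument survive the abstract perturbation of Hypothesis H, really does obstruct these planes; everything else is formal.

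For the second assertion, note that for $\epsilon=\epsilon_0$ the change of coordinates $a\mapsto\overline a=a-\epsilon_0(a)$ is the identity on generators. Hence $A^{\epsilon_0}_*(M,\alpha)$ is, as a $\mathbb Q$-vector space, the one freely generated by good Reeb periodic orbits, i.e.\ exactly $C^{\t{cyl}}_*(M,\alpha)$, and $\partial^{\epsilon_0}=\partial$. Decomposing $\partial=\partial_0+\partial_1+\partial_2+\cdots$ by word length, the previous step gives $\partial_0=0$, so $\partial^{\epsilon_0}_1=\partial_1$; and the word-length-one part of the full differential sends $\gamma$ to $\sum_{\gamma'}\frac{n_{\gamma,\gamma'}}{\kappa(\gamma')}\gamma'$, which is precisely the cylindrical contact homology differential. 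Therefore $(A^{\epsilon_0}_*(M,\alpha),\partial^{\epsilon_0}_1)$ coincides with $(C^{\t{cyl}}_*(M,\alpha),\partial)$, and $HC^{\epsilon_0}(M,\alpha,J)=HC^{\t{cyl}}_*(M,\alpha,J)$, as claimed.
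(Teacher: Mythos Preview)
The paper states this proposition without proof, so there is nothing to compare against; your argument is the standard one and is correct. Your observation that the relative homotopy class labelling of the moduli spaces is what makes the ``no planes'' conclusion survive the abstract perturbation of Hypothesis~H is exactly the right point to flag.
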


Let $\alpha_0$ and $\alpha_1$ be two non-degenerate, homotopic contact forms and \[\phi:(A(M,\alpha_1),\partial_{J_1})\to(A(M,\alpha_0),\partial_{J_0})\] be a chain map. If $\epsilon_0$ is an augmentation on $(A(M,\alpha_0),\partial_{J_0})$ then $\phi$ induces a \emph{pull back augmentation} $\epsilon_1=\epsilon_0\circ \phi$ on $(A(M,\alpha_1),\partial_{J_1})$.  The morphisms $\psi$ and $\Psi$ described in Theorem \ref{theoreme_Phi_0_1} induce morphisms $\psi^{\epsilon_0}$ and $\Psi^{\epsilon_0}$. We define $\theta^{\epsilon_0}$ and $\Theta^{\epsilon_0}$ in the same way.

\begin{theorem}[see {\cite[Theorem 2.8]{Bourgeois09}}]\label{theorem_augmentation}
Under Hypothesis H,
\begin{enumerate}
 \item the set of linearized contact homologies
\begin{equation*}
\{HC^\epsilon(M,\alpha,J),\epsilon \t{ augmentation of } (A_*(M,\alpha),\partial_J) \}
\end{equation*}
is an invariant of the isotopy class of the contact structure $\xi=\ker(\alpha)$,
\item let $\phi_1, \phi_2 :(A(M,\alpha_1),\partial_{J_1})\to(A(M,\alpha_0),\partial_{J_0})$ be two homotopic chain maps and $\epsilon_0$ be an augmentation on $(A(M,\alpha_0),\partial_{J_0})$. Let $\epsilon_1$ and $\epsilon_2$ denote the pull-back augmentations by $\phi_1$ and $\phi_2$.
Then, the map
\begin{equation*}
\begin{array}{cccc}
\phi(\epsilon_1,\epsilon_2):&(A^{\epsilon_1}_*(M,\alpha_1),\partial_{J_1})&\longrightarrow&(A^{\epsilon_2}_*(M,\alpha_1),\partial_{J_1})\\
&\gamma-\epsilon_1(\gamma)&\longmapsto& \gamma-\epsilon_2(\gamma)
\end{array}
\end{equation*}
induces an isomorphism $\Phi(\epsilon_1,\epsilon_2)$ in homology such that the diagram
\begin{center}
 \begin{tikzpicture}
  \draw(0,0)   node    (A)  {$HC^{\epsilon_1}_*(M,\alpha_1,J_1)$};
  \draw(4,0)  node    (B)   {$HC^{\epsilon_2}_*(M,\alpha_1,J_1) $};
  \draw(2,-1.3)    node     (C)  {$HC^{\epsilon_0}_*(M,\alpha_0,J_0)$};

  \path[->] (A) edge              node [left]       {$\Phi_1$} (C)
                edge              node  [above] {$\Phi(\epsilon_1,\epsilon_2) $ } (B)
            (B) edge              node  [right]      {$\Phi_2$ } (C);
 \end{tikzpicture}
\end{center}
 commutes where $\Phi_1$ and $\Phi_2$ are the morphisms induced by~$\phi_1$ and~$\phi_2$.
\end{enumerate}
\end{theorem}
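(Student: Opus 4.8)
The plan is to treat the statement as a purely formal consequence of Hypothesis~H together with the composition properties of the cobordism maps recorded in Theorem~\ref{theoreme_Phi_0_1}; no new count of holomorphic curves is needed. I would first reduce part~1 to part~2. Given an isotopy of contact structures with endpoints $\ker\alpha_0$ and $\ker\alpha_1$, Gray stability produces a diffeomorphism $\Phi$ with $\Phi^*\alpha_1$ a contact form for $\ker\alpha_0$, and pull-back by $\Phi$ identifies $A_*(M,\alpha_1)$ with $A_*(M,\Phi^*\alpha_1)$ together with all augmentations and all linearized complexes; so it suffices to compare two homotopic contact forms $\alpha_0,\alpha_1$ defining the same $\xi$. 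Independence of the almost complex structure is handled similarly through the maps $\Psi((\alpha,J),(\alpha,J'))$, which always exist by Theorem~\ref{theoreme_Phi_0_1}(2).

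Next I would set up linearization as a functor on the relevant maps. The key algebraic lemma is: if $\phi:(A_*(M,\alpha_1),\partial_{J_1})\to(A_*(M,\alpha_0),\partial_{J_0})$ is a unital algebra chain map and $\epsilon_0$ an augmentation of the target, with pull-back augmentation $\epsilon_1=\epsilon_0\circ\phi$, then conjugating by the coordinate changes $a\mapsto a-\epsilon_i(a)$ on both sides turns $\phi$ into an algebra chain map $\widetilde\phi$ with vanishing word-length-$0$ part; splitting $\widetilde\phi=\widetilde\phi_1+\widetilde\phi_2+\cdots$ by word length, the linear part $\widetilde\phi_1$ is a chain map $(A^{\epsilon_1}_*(M,\alpha_1),\partial^{\epsilon_1}_1)\to(A^{\epsilon_0}_*(M,\alpha_0),\partial^{\epsilon_0}_1)$. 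This is verified from $\widetilde\phi\,\partial^{\epsilon_1}=\partial^{\epsilon_0}\,\widetilde\phi$: since $\epsilon_i\circ\partial_{J_i}=0$, the linearized differentials $\partial^{\epsilon_i}$ have no constant term, hence both $\widetilde\phi$ and $\partial^{\epsilon_i}$ preserve the decreasing word-length filtration, and passing to the associated graded in degree~$1$ yields $\widetilde\phi_1\,\partial^{\epsilon_1}_1=\partial^{\epsilon_0}_1\,\widetilde\phi_1$. Applied to the cobordism maps $\psi$ and the scaling isomorphisms $\theta$ of Theorem~\ref{theoreme_Phi_0_1}, this produces the linearized maps $\psi^{\epsilon},\Psi^{\epsilon},\theta^{\epsilon},\Theta^{\epsilon}$ already named in the excerpt.

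For part~2 I would lift the homotopy between $\phi_1$ and $\phi_2$: in this setting such a homotopy is a $(\phi_1,\phi_2)$-derivation $H$ with $\phi_1-\phi_2=\partial_{J_0}H+H\partial_{J_1}$. Composing with $\epsilon_0$ gives $h:=\epsilon_0\circ H$ satisfying $\epsilon_1-\epsilon_2=h\circ\partial_{J_1}$, so the pull-back augmentations are themselves homotopic; linearizing $H$ as above yields a chain homotopy realizing the relation $\Phi_1=\Phi_2\circ\Phi(\epsilon_1,\epsilon_2)$ on linearized homology, while $\Phi(\epsilon_2,\epsilon_1)$ furnishes an inverse to $\Phi(\epsilon_1,\epsilon_2)$, their composites being homotopic to the identity through the corresponding derivations. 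Part~1 then follows by combining this with Theorem~\ref{theoreme_Phi_0_1}(1)--(4): the composites of the cobordism maps $\psi$ between $(\alpha_0,J_0)$ and $(\alpha_1,J_1)$ in the two directions are, up to the scaling isomorphisms $\theta$, homotopic to the identity, so after linearization — using part~2 to absorb the change of pull-back augmentation along each homotopy — the induced maps $\Psi^{\epsilon}$ on linearized homology are mutually inverse isomorphisms. Hence $\epsilon_0\mapsto$ (isomorphism class of $HC^{\epsilon_0}(M,\alpha_0,J_0)$) and $\epsilon_1\mapsto$ (isomorphism class of $HC^{\epsilon_1}(M,\alpha_1,J_1)$) have the same image, the correspondence being realized by $\epsilon_1=\epsilon_0\circ\psi$.

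The main obstacle is the homotopy bookkeeping rather than any isolated hard step. One must pin down the precise algebraic form of the SFT chain homotopies between cobordism maps — they are derivations ``along the homotopy'', and in the full algebra one has to be careful with disconnected curves and with signs — check that their linear parts are honest chain homotopies between the linearized maps, and above all keep track of how augmentations and, crucially, their homotopy classes are transported through compositions of cobordism maps carrying non-trivial conformal factors $c$. Everything else is formal once Hypothesis~H grants $\partial^2=0$, $(\partial^{\epsilon}_1)^2=0$, and the existence and composition properties of $\psi,\Psi,\theta,\Theta$.
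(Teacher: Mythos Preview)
The paper does not actually prove this theorem: it is stated with attribution to Bourgeois--Ekholm--Eliashberg and a reference to \cite[Theorem 3.2]{ColinHonda08}, and the text then moves on immediately to the remark on homotopic augmentations and to Section~\ref{section_Morse_Bott}. There is therefore no ``paper's own proof'' to compare your proposal against.

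That said, your sketch is a faithful outline of the standard argument as presented in the cited references: linearize the algebra chain maps and chain homotopies by passing to word-length filtration, check that pull-back augmentations are homotopic when the inducing chain maps are, and assemble the cobordism maps of Theorem~\ref{theoreme_Phi_0_1} into mutually inverse isomorphisms on linearized homology. Your identification of the delicate point --- the bookkeeping of SFT chain homotopies as $(\phi_1,\phi_2)$-derivations and the transport of augmentation homotopy classes through compositions involving the scaling factors~$c$ --- is exactly where the work lies in \cite[\S3.2]{ColinHonda08}. So the proposal is correct in spirit and matches the literature, but there is nothing in this paper to benchmark it against.
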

Augmentations $\epsilon_1$ and $\epsilon_2$ are said to be \emph{homotopic}, see \cite[Section 2.5]{Bourgeois09} for a general definition.

\section{Morse-Bott approach to contact homology}\label{section_Morse_Bott}

Bourgeois introduced Morse-Bott approach to contact homology in his PhD thesis~\cite{Bourgeois02} in 2002. It gives a way to compute contact homology when the contact form is degenerate and there exist submanifolds foliated by periodic Reeb orbits. The main idea is to compare the Morse-Bott degenerate situation to non-degenerate situations obtained by perturbing the degenerate form using a Morse function. In this text, we will only use part of the theory on simple examples to compute the contact homology of circle bundles.

\subsection{Perturbation of contact forms of Morse-Bott type}
Let $(M,\xi=\ker(\alpha))$ be a contact manifold with a contact form $\alpha$ and let $\phi_t$ be the Reeb flow.
\begin{definition}
The form $\alpha$ is of \emph{Morse-Bott type} if
  \begin{enumerate}
    \item the set $\sigma(\alpha)$ of period of periodic Reeb orbits is discrete, $\sigma(\alpha)$ is called the \emph{action spectrum},
    \item if $L\in\sigma(\alpha)$, then $N_L=\{p\in M, \phi_L(p)=p\}$ is a smooth closed submanifold;
    \item the rank of $\d\alpha_{\vert N_L}$ is locally constant and $T_pN_L=\ker(\d\phi_L-I)$.
  \end{enumerate}
\end{definition}

For instance, the standard contact form $\alpha_n=\sin(nx)\d y+\cos(nx)\d z$ on $T^3$ is of Morse-Bott type. The Reeb vector field is 
\[R_{\alpha_n}=\left(\begin{array}{c}
0\\
\sin(nx)\\
\cos(nx)
\end{array}\right)\]
and its flow preserves all tori $\{x=\t{cst}\}$. A torus $\{x=x_0\}$ is foliated by periodic Reeb orbits if and only if $ \sin(nx_0)$ and $\cos(nx_0)$ are rationally dependent. Another important example is the case of a contact structure transverse to the fibers on a circle bundle and $S^1$-invariant: such a contact structure admits a contact form whose Reeb vector field is tangent to the fibers. The whole manifold is then foliated by periodic Reeb orbits of the same period.

The Reeb flow induces an $S^1$-action on $N_L$ for all $L\in\sigma(\alpha)$. In general, the quotient space $S_L$ is an orbifold. However in the examples studied in this paper, the spaces $S_L$ will be smooth manifolds. Hence, we assume here that $S_L$ is smooth.

We now describe how to perturb a contact form $\alpha$ of Morse-Bott type. Fix $L\in\sigma(\alpha)$. For all $L'\in\sigma(\alpha)\cap[0,L]$, choose a Morse function $f_{L'}$ on $S_{L'}$ and extend it to $N_{L'}$ so that $\d f_{L'}(R_\alpha)=0$. Then, extend it to $M$ using cut-off functions in such a way that its support is contained in a small neighborhood of $N_{L'}$. Let $\overline{f}_L$ denote the sum of all these functions. Perturb the contact form to
$\alpha_{\lambda,L}=(1+\lambda\overline{f}_L)\alpha$.

\begin{proposition}[Bourgeois {\cite[Lemma 2.3]{Bourgeois02}}]
For all $L>0$, there exists $\lambda(L)>0$ such that for all $0<\lambda\leq\lambda(L)$, the periodic orbits of $\alpha_{\lambda,L}$ with period smaller than $L$ correspond to critical points of $f_{L'}$ on $S_{L'}$ for $L'\in\sigma(\alpha)\cap[0,L]$. Additionally, these periodic orbits are non-degenerate.
\end{proposition}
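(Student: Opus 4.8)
The plan is to analyze the Reeb dynamics of the perturbed form $\alpha_{\lambda,L}=(1+\lambda\overline f_L)\alpha$ period by period, working on each stratum $N_{L'}$ for $L'\in\sigma(\alpha)\cap[0,L]$. The starting observation is that if $X$ is the Reeb vector field of $\alpha$ and $R'$ is the Reeb vector field of $\alpha_{\lambda,L}$, then on the support of $\overline f_L$ one can write $R' = \frac{1}{1+\lambda\overline f_L}(X + Y_\lambda)$ where $Y_\lambda$ is a vector field in $\xi$ determined by $\d(\lambda\overline f_L)$ and $\d\alpha$; away from the supports of the $f_{L'}$ one simply has $R'=X$, so no new short orbits appear there. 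The key point is that $\d f_{L'}(X)=0$, so the flow of $R'$ still preserves (up to time reparametrization) the foliation structure near $N_{L'}$ to first order, and a periodic orbit of $R'$ of period $\approx L'$ must, for $\lambda$ small, stay in a small tubular neighborhood of $N_{L'}$ by a compactness/continuity argument (orbits of $\alpha$ of period $\le L$ form the finitely many compact sets $N_{L'}$, and $\sigma(\alpha)$ is discrete, so there is a spectral gap).

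The heart of the argument is then a return-map computation on $N_{L'}$, reduced to $S_{L'}$ via the $S^1$-action. First I would fix a stratum $N_{L'}$, identify a neighborhood using the Morse-Bott normal form for $\alpha$ (this is where condition (3) in the definition of Morse-Bott type, $T_pN_{L'}=\ker(\phi_{L'}-I)$ and locally constant rank of $\d\alpha|_{N_{L'}}$, is used), and compute the time-$L'$ return map of $R'$ in these coordinates. The claim is that, after quotienting by the $S^1$-action, the return map on $S_{L'}$ is, to leading order in $\lambda$, the time-$(\lambda L')$ gradient flow of $f_{L'}$ (with respect to the metric induced by $\d\alpha|_{\xi}$), while in the normal directions coming from the symplectic part of $\xi$ it is a linear map with no eigenvalue $1$ (this is the non-degeneracy coming from the original Morse-Bott stratification, inherited from $\d\phi_{L'}$ acting on $\xi/TN_{L'}$). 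Hence a fixed point of the return map corresponds exactly to a zero of $\nabla f_{L'}$, i.e.\ a critical point of $f_{L'}$, and at such a point the linearized return map is a product of the Hessian-type map of $f_{L'}$ (no eigenvalue $1$ since $f_{L'}$ is Morse) and the non-degenerate normal map — so the resulting orbit is non-degenerate. One must also check that multiply-covered orbits (iterates) do not produce spurious fixed points with period $\le L$; this follows because the relevant return maps at higher iterates are still governed by the (iterated) gradient flow and the iterated normal map, and $\sigma(\alpha)$ being discrete controls which periods $\le L$ can occur.

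Finally, to get a single $\Lambda$ working for all periods $\le L$ simultaneously, I would run the above on each of the finitely many strata $N_{L'}$, $L'\in\sigma(\alpha)\cap[0,L]$, obtain a threshold $\Lambda_{L'}$ for each, and set $\Lambda=\min_{L'}\Lambda_{L'}$; the finiteness is guaranteed by discreteness of $\sigma(\alpha)$ and compactness of $M$. The main obstacle I expect is the return-map computation itself: extracting from the Morse-Bott normal form the precise statement that the reduced return map on $S_{L'}$ is a $\lambda$-small perturbation of the gradient flow of $f_{L'}$, uniformly on the tubular neighborhood, and simultaneously controlling the transverse symplectic directions so that non-degeneracy is genuinely preserved. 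Everything else — the localization of short orbits near the $N_{L'}$, the handling of iterates, and the passage to a uniform $\Lambda$ — is comparatively routine given discreteness of $\sigma(\alpha)$ and compactness.
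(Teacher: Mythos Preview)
The paper does not give its own proof of this proposition: it is stated as a result of Bourgeois and attributed directly to \cite[Lemma 2.3]{Bourgeois02}, with no argument supplied in the present text. So there is no ``paper's proof'' to compare against; what you have written is a sketch of the argument one finds in Bourgeois' thesis.

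That said, your outline is the standard one and is essentially correct. The decomposition $R'=\frac{1}{1+\lambda\overline f_L}(R_\alpha+Y_\lambda)$ with $Y_\lambda\in\xi$ determined by $\iota_{Y_\lambda}\d\alpha=-\lambda\,\d\overline f_L$ (using $\d\overline f_L(R_\alpha)=0$) is exactly how Bourgeois begins, and the reduction of the time-$L'$ return map on a tubular neighborhood of $N_{L'}$ to the gradient flow of $f_{L'}$ on $S_{L'}$ (to first order in $\lambda$) plus a non-degenerate transverse linear map is the content of his Lemma~2.3. Your localization step---short orbits of $R'$ must lie near some $N_{L'}$ because $\sigma(\alpha)$ is discrete and periodic orbits depend continuously on $\lambda$---and your uniformization step $\Lambda=\min_{L'}\Lambda_{L'}$ over finitely many strata are both correct and routine. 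The one place where the details need care, as you flag, is the normal-form computation: one has to use the Morse--Bott condition $T_pN_{L'}=\ker(\d\phi_{L'}-\Id)$ to split off the transverse symplectic directions cleanly and see that non-degeneracy there survives the perturbation, while the Morse condition on $f_{L'}$ handles the directions tangent to $S_{L'}$. This is a genuine computation but contains no hidden obstacle.
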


\begin{remark}
An almost complex structure $J$ on the symplectization $\bb R\times M$ which is $S^1$-invariant on $N_L$ induces a Riemannian metric $\d\alpha(\cdot,J\cdot)$ on $S_L$.
\end{remark}

\subsection{Morse-Bott contact homology}

Roughly speaking, the complex of Morse-Bott approach to contact homology is generated by critical points of the functions $f_L$, and the differential counts generalized holomorphic cylinders. Generalized holomorphic cylinders are a combination of holomorphic curves asymptotic to periodic orbits in the spaces $N_L$ and gradient lines in the spaces $S_L$. See \cite{Bourgeois02} for more details, \cite{Bourgeois102} for a summary of \cite{Bourgeois02}, or \cite{Bourgeois03} for a general presentation.

Consider a family of almost complex structures $J_\lambda$ adapted to 
$\alpha_{\lambda,L}$ and $S^1$-invariant on $N_{L'}$ for all $L'\leq L$. Generalized holomorphic cylinders are limits of $J_\lambda$-holomorphic curves as $\lambda\to 0$ and derive from two main phenomena. On one side, holomorphic buildings appear similarly to the non-degenerate situation: up to reparametrization, a sequence converges in $\mathcal C^\infty$-loc to a holomorphic curve with asymptotic periodic orbits in some intermediate spaces $N_L$. On the other hand, when the asymptotics of two adjacent levels in a holomorphic building differ, projections on $S_L$ grow nearer to a gradient trajectory of $f_L$: up to reparametrization, a sequence converges in $\mathcal C^\infty$-loc to a trivial cylinder over any point of the gradient trajectory. The associated compactness theorem derives from Bourgeois's thesis \cite[Chapters 3 and 4]{Bourgeois02}. One can also refer to~\cite{BEHWZ03}. In our simpler setting, Bourgeois's results lead to the following theorems.

\begin{theorem}[Bourgeois \cite{Bourgeois02}]\label{theoreme_MB_S_1_invariant_modules} Let $\pi:M\to S$ be a circle bundle over a closed oriented surface carrying an $S^1$-invariant contact form $\alpha$ transverse to the fibers. Fix $L>0$ and a Morse-Bott perturbation $f_L$ induced by a Morse function $f:S\to\bb R$. Let $J_\lambda$ be a family of $S^1$-invariant almost complex structures on $\bb R\times M$ adapted to $\alpha_{\lambda,L}$ and converging to an almost complex structure $J$ adapted to $\alpha$ as $\lambda\to 0$. Assume that $(f,g)$ is a Morse-Smale pair where $g$ is the Riemannian metric on $S$ induced by $J$ and $\alpha$.
Fix two critical points $x_+ $et $x_-$ of $f$ so that $\index(x_+)-\index(x_-)=1$ and let $\gamma_+$ and $\gamma_-$ denote the $k$-th iterates of associated simple periodic Reeb orbits. Then, for all small $\lambda$, the moduli space
$\mathcal M(\gamma_+,\gamma_-,J_\lambda)/\bb R$ is a $0$-dimensional manifold. Additionally, $\mathcal M(\gamma_+,\gamma_-,J_\lambda)/\bb R$ can be identified with the set of gradient trajectories from $x_+$ to $x_-$, the holomorphic curves are arbitrarily close to cylinders over the gradient trajectories and the orientations induced by contact homology and Morse theory are the same.
\end{theorem}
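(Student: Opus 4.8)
The plan is to establish a bijection between the moduli space $\overline{\mathcal M}(\gamma_+,\gamma_-,J_\lambda)$ and the space of negative gradient trajectories of $f$ from $x_+$ to $x_-$, and then to upgrade this bijection to an orientation-preserving diffeomorphism of $0$-dimensional manifolds. I would split the argument into four steps. First, I would recall from the Morse–Bott setup that for $\lambda$ small the periodic orbits of $\alpha_{\lambda,L}$ of period $\leq L$ are exactly the $\gamma_x$ associated to critical points $x$ of $f_{L'}$ on $S_{L'}$, with Conley–Zehnder index (in the relevant trivialisation along the fibre) differing from the Morse index of $x$ by a fixed shift, so that $\index(x_+)-\index(x_-)=1$ forces the expected Fredholm index $1$, and hence $\overline{\mathcal M}(\gamma_+,\gamma_-,J_\lambda)$ is $0$-dimensional once transversality holds.

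Second, I would analyse the limit $\lambda\to 0$ using Bourgeois' compactness theorem (\cite[Chapters 3 and 4]{Bourgeois02}, see also \cite{BEHWZ03}): any sequence $u_n\in\mathcal M(\gamma_+,\gamma_-,J_{\lambda_n})$ with $\lambda_n\to 0$ converges to a generalized holomorphic building. Because the contact form is $S^1$-invariant and transverse to the fibres, the only periodic orbits with period $\leq L$ in the unperturbed picture are the fibres over $S$, all of the same period; the Hofer energy of $u_n$ is $\mathcal A(\gamma_+)\to$ the fibre period, so no genuine holomorphic level with nonconstant underlying curve can appear (it would have to have strictly positive energy bounded away from zero while also being asymptotic to fibres, forcing it to be a branched cover of a trivial cylinder). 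Hence the building degenerates to a single Morse level: a trivial cylinder over a point moving along a negative gradient trajectory of $f$ on $S$ with respect to the metric $g$ induced by $(J,\alpha)$, from $x_+$ to $x_-$. The Morse–Smale hypothesis guarantees this gradient trajectory space is a compact $0$-manifold and that the limit is transversally cut out.

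Third, I would run the gluing construction in the reverse direction: given a rigid gradient trajectory $\ell$ from $x_+$ to $x_-$, Bourgeois' Morse–Bott gluing produces, for all sufficiently small $\lambda$, a unique $J_\lambda$-holomorphic cylinder $u_\lambda$ close to the trivial cylinder over $\ell$, asymptotic to $\gamma_+$ and $\gamma_-$; together with the compactness of Step 2 this yields the claimed bijection and the statement that the holomorphic curves are $\mathcal C^\infty$-close to cylinders over gradient trajectories. Fourth, for the orientation claim I would compare the coherent orientation on $\overline{\mathcal M}(\gamma_+,\gamma_-,J_\lambda)$ coming from the determinant line of the linearised Cauchy–Riemann operator with the Morse orientation on the gradient flow line space: under the gluing, the linearised operator splits (up to homotopy through Fredholm operators) as a direct sum of an invertible part along the fibre direction — whose contribution to the determinant line is canonically trivial because of the $S^1$-symmetry — and the linearised negative gradient flow operator on $S$, so the determinant lines are canonically isomorphic, compatibly with the sign conventions; this is exactly the content of the orientation comparison in \cite{Bourgeois02}.

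The main obstacle is Step 2–3, the interplay of compactness and gluing in the Morse–Bott regime: one must rule out bubbling and extra holomorphic levels using the sharp energy bound and $S^1$-invariance, and must show the gluing parameter can be taken uniform over the (finite) set of gradient trajectories; both are technical but are precisely what Bourgeois' thesis establishes in this setting, so here I would cite rather than reprove them. The orientation bookkeeping in Step 4 is delicate but formal once the operator splitting is in place.
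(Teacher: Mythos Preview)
Your proposal is correct and matches the paper's approach: the paper does not give an independent proof of this theorem but instead attributes it entirely to Bourgeois' thesis, pointing to the Fredholm setup in \cite[5.1.1]{Bourgeois02}, the transversality via surjectivity of the linearised operator on approximate curves \cite[Propositions~4.13, 5.14]{Bourgeois02}, the implicit function theorem \cite[Proposition~5.16]{Bourgeois02}, and the orientation comparison \cite[Proposition~7.6]{Bourgeois02}. Your four-step outline (index, compactness, gluing, orientations) is exactly the architecture of that argument, and your energy observation in Step~2---that in the $S^1$-invariant setting all Reeb orbits have the same period, so the $\d\alpha$-energy of the perturbed cylinders is $O(\lambda)$ and any limit level must be a trivial cylinder---is the geometric reason the Morse--Bott building reduces to a pure gradient trajectory; this is implicit in the paper's reference to \cite[Chapters~3 and~4]{Bourgeois02}.
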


\begin{theorem}[Bourgeois \cite{Bourgeois02}]\label{theoreme_MB_T_3_invariant_modules}
Consider the standard contact form \[\alpha=\sin( x)\d y+\cos( x)\d z\] on $T^3$. Fix $L>0$ and a Morse-Bott perturbation $f_L$ induced by a Morse function $f:S^1\to\bb R$ with two critical points. Let $J_\lambda$  be a family of almost complex structures on $\bb R\times M$ adapted to $\alpha_{\lambda,L}$, $S^1$-invariant on $N_{L'}$ for all $L'\leq L$ and converging to an almost complex structure $J$.
Fix $L'\leq L$ and let $T$ be a torus in $N_{L'}$. Let $\gamma_+$ and $\gamma_-$ denote the $k$-th iterates of two simple periodic orbits in $T$ associated to the critical points of $f$. Then for all small enough  $\lambda$, the moduli space
$\mathcal M(\gamma_+,\gamma_-,J_\lambda)/\bb R$
has exactly two elements with opposite orientations and the holomorphic curves are arbitrarily close to cylinders over gradient trajectories of $f$. In addition, if $\gamma_+$ and $\gamma_-$ are not in the same Morse-Bott torus, $\mathcal M(\gamma_+,\gamma_-,J_\lambda)/\bb R $ is empty.
\end{theorem}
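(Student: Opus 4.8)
The plan is to run the Morse-Bott machinery of \cite[Ch.~3--4]{Bourgeois02} in this elementary setting and thereby reduce the count of $J_\lambda$-holomorphic cylinders to a count of gradient flow lines on a circle. First I would set up the Morse-Bott data explicitly. The orbit space $N_{L'}$ is a disjoint union of tori $\{x=x_0\}$ on which $R_\alpha$ has slope $(\sin x_0,\cos x_0)$ of rational type and length $L'$; the quotient $S_{L'}$ by the Reeb $S^1$-action is a disjoint union of circles, and on the component $\overline T\cong S^1$ through $T$ the induced function is the given Morse function $f$ with exactly two critical points $x_+$ of index $1$ and $x_-$ of index $0$. The Morse-Bott index count then makes $\overline{\mathcal M}(\gamma_+,\gamma_-,J_\lambda)$ $0$-dimensional, and one expects its points to be $\mathcal C^\infty$-close to trivial cylinders over the negative gradient trajectories of $f$ on $\overline T$.

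Next I would analyze compactness as $\lambda\to0$. Along a sequence in $\overline{\mathcal M}(\gamma_+,\gamma_-,J_\lambda)$, the Hofer energy is uniformly bounded by $\mathcal A(\gamma_+)$, so by the SFT compactness theorem \cite{BEHWZ03} a subsequence converges to a holomorphic building; moreover the $\d\alpha$-energy $\int u^*\d\alpha_{\lambda,L}=\mathcal A_{\alpha_{\lambda,L}}(\gamma_+)-\mathcal A_{\alpha_{\lambda,L}}(\gamma_-)=\lambda L'\bigl(f(x_+)-f(x_-)\bigr)$ tends to $0$ since $\overline f_L$ is constant along each orbit. By the Morse-Bott compactness theorem of \cite[Ch.~3--4]{Bourgeois02}, a vanishing $\d\alpha$-energy forces every holomorphic level of the building to have zero energy, hence (by \cite[5.3]{BEHWZ03}) to be a trivial cylinder over a Reeb trajectory, so the building collapses to a chain of negative gradient fragments of $f$ on $S_{L'}$ from $x_+$ to $x_-$. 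Since $f$ restricted to $\overline T=S^1$ has no critical point other than $x_\pm$, such a chain is a single unbroken trajectory, and there are exactly two of them (one going each way around the circle). This already gives the last assertion: a cylinder between orbits on distinct tori would have arbitrarily small $\d\alpha$-energy for small $\lambda$ and hence image $\mathcal C^0$-close to a single Reeb orbit, contradicting that it joins two different level sets $\{x=x_0\}$; so $\overline{\mathcal M}(\gamma_+,\gamma_-,J_\lambda)=\emptyset$ in that case.

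Conversely I would establish gluing. Along a trivial cylinder over a gradient trajectory of $f$, the linearized Cauchy-Riemann operator splits into a transverse part and a part tangent to the Morse-Bott family; the transverse part is an isomorphism because $J_\lambda$ is $S^1$-invariant on $N_{L'}$ and $\gamma_\pm$ are non-degenerate in the normal directions, and the tangent part is governed by the (automatic, since $\dim\overline T=1$) Morse-Smale condition for $f$. An implicit function/gluing argument as in \cite{Bourgeois02} then produces, for every small $\lambda>0$, exactly one element of $\overline{\mathcal M}(\gamma_+,\gamma_-,J_\lambda)$ near each of the two gradient trajectories, $\mathcal C^\infty$-close to the corresponding trivial cylinder, and no others. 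Finally, comparing the coherent orientation of the moduli space with the Morse coorientation of the flow lines --- the sign bookkeeping being identical to that behind the companion Theorem~\ref{theoreme_MB_S_1_invariant_modules} --- the two glued cylinders inherit opposite signs, exactly as the two gradient arcs of $f$ cancel in the Morse complex of $S^1$.

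The main obstacle is the combination of the Morse-Bott compactness statement in Step two and the matching gluing with coherent orientations in Step three: showing that the $J_\lambda$-cylinders converge in $\mathcal C^\infty_{\mathrm{loc}}$ precisely to cylinders over gradient trajectories, with no extra bubbling or level splitting, and conversely that nothing else is glued in, is the technical heart of Bourgeois' thesis \cite[Ch.~3--4]{Bourgeois02}. Here it is substantially lightened by the facts that $S_{L'}$ is one-dimensional, that all participating orbits share the period $L'$, and that $J_\lambda$ is $S^1$-invariant on the $N_{L'}$, so that the whole analysis reduces to the normal Cauchy-Riemann operator of a single trivial cylinder.
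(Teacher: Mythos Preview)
Your plan is correct and matches the paper's own treatment: the paper does not give an independent proof either but simply records that both theorems follow from Bourgeois' thesis, pointing to the Fredholm setup \cite[5.1.1]{Bourgeois02}, surjectivity of the linearized operator and the implicit function theorem \cite[Prop.~4.13, 5.14, 5.16]{Bourgeois02}, and the orientation comparison \cite[Prop.~7.6]{Bourgeois02} --- exactly the compactness/gluing/sign package you outline. One small simplification: for the last assertion you do not need the $\d\alpha$-energy argument at all, since for $\alpha=\sin(x)\d y+\cos(x)\d z$ distinct tori carry Reeb orbits in distinct free homotopy classes of $T^3$, so no cylinder can connect them.
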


\begin{remark}
This theorem generalizes to contact forms $\sin(n x)\d y+\cos(n x)\d z$ and $f(x)\d y+g(x)\d z$ if $f$ and $g$ are increasing and decreasing on the same sets as $ x\mapsto \sin(n x)$ and $x\mapsto \cos(n x)$.
\end{remark}
These theorems derive from Bourgeois's work and Hypothesis~\ref{H1} is always satisfied. Indeed, the solutions of the Cauchy-Riemann equations is the $0$-set of a Fredholm section in a Banach bundle (described in \cite[5.1.1]{Bourgeois02}) and thus a $3$-manifold. To achieve transversality of this section, Bourgeois proves that the linearized Cauchy-Riemann operator is surjective on its $0$-set by studying its surjectivity for curves close to holomorphic curves (the curves are defined in \cite[5.3.2]{Bourgeois02}, the surjectivity is proved in \cite[Proposition 4.13 and 5.14]{Bourgeois02}) and then using an implicit function theorem \cite[Proposition 5.16]{Bourgeois02}. To obtain the desired moduli space, we quotient the space of solutions by the biholomorphisms of $\bb R\times S^1$ and the $\bb R$-action. The orientation issues are studied in  \cite[Proposition 7.6]{Bourgeois02}.

\begin{corollary}[Bourgeois \cite{Bourgeois02}]\label{corolary_bundle}
Let $M$ be an oriented circle bundle over a closed oriented surface $S$ carrying an $S^1$-invariant contact structure $\xi$ which is transverse to the fibers. Let $f$ denote the homotopy class of the fiber. Then, for all $k>0$, there exists a contact form $\alpha$ such that
\[HC_*^{[f^k]}(M,\alpha,\mathbb Q)=H_*(S,\mathbb Q).\]
The cylindrical contact homology is trivial in all other homotopy classes.
\end{corollary}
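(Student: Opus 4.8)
The plan is to compute the cylindrical contact homology class by class by Morse--Bott methods, starting from the $S^1$-invariant contact form whose Reeb flow rotates the fibers, and then to read off the growth rate. Since $\xi$ is $S^1$-invariant and transverse to the fibers, fix an $S^1$-invariant contact form $\alpha_0$ for $\xi$ whose Reeb vector field is tangent to the fibers. Its flow is the fiberwise rotation, so $M$ is foliated by closed Reeb orbits of one common period $T_0$, and $\alpha_0$ is of Morse--Bott type with $\sigma(\alpha_0)=\{kT_0:k\ge1\}$, $N_{kT_0}=M$ and $S_{kT_0}=S$ for every $k\ge1$; in the situations of Figure \ref{figure_classification_fibres_cercles} the fiber is non-contractible, so $\alpha_0$ is hypertight and the cylindrical complex is defined. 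Given $k>0$, fix a cutoff $L>kT_0$, an $S^1$-invariant adapted almost complex structure $J$ (so that the induced metric $g$ on $S$ is fixed), a Morse function $f:S\to\mathbb R$ with $(f,g)$ Morse--Smale, and a family $(J_\lambda)$ of $S^1$-invariant adapted almost complex structures converging to $J$. Form the Morse--Bott perturbation $\alpha=\alpha_{\lambda,L}=(1+\lambda\overline{f}_L)\alpha_0$ of Section \ref{section_Morse_Bott}. For $\lambda$ small, the periodic orbits of $\alpha$ of period at most $L$ are exactly the orbits $\gamma_x^m$, one for each $x\in\mathrm{Crit}(f)$ and each $m$ with $mT_0\le L$; each $\gamma_x^m$ is non-degenerate and is the $m$-fold cover of a simple orbit near the fiber through $x$, hence lies in the free homotopy class $f^m$.

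Next I would compute $HC_*^{f^k}$. A holomorphic cylinder contributing to the cylindrical differential sweeps out a free homotopy between its two asymptotic orbits, so $\partial$ preserves the splitting $C_*(M,\alpha)=\bigoplus_{m\ge1}C_*^{f^m}(M,\alpha)$, and $C_*^{f^k}(M,\alpha)$ is freely generated by $\{\gamma_x^k:x\in\mathrm{Crit}(f)\}$ — following \cite{Bourgeois02} one checks that every $\gamma_x^k$ is good, so none is discarded — with grading $\index(x)$ up to a global shift depending on $k$ and the Euler class, suppressed as in the statement. By Theorem \ref{theoreme_MB_S_1_invariant_modules}, for $\lambda$ small enough and critical points $x_\pm$ with $\index(x_+)-\index(x_-)=1$, the moduli space $\overline{\mathcal M}(\gamma_{x_+}^k,\gamma_{x_-}^k,J_\lambda)$ is identified, with matching orientations, with the set of negative gradient trajectories of $f$ from $x_+$ to $x_-$. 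Hence $\partial$ agrees with the Morse differential of $(f,g)$ up to the overall nonzero factor $1/\kappa(\gamma_{x_-}^k)=1/k$, which does not change the homology; in particular $\partial^2=0$, inherited from $\partial_{\mathrm{Morse}}^2=0$ and not relying on Hypothesis H since the transversality behind Theorem \ref{theoreme_MB_S_1_invariant_modules} is geometric, and
\[
HC_*^{f^k}(M,\alpha,\mathbb Q)\;\cong\;H_*(S,\mathbb Q).
\]
For a class $[a]$ not of the form $f^m$, the form $\alpha$ (with $L$ large and $\lambda$ small) has no periodic orbit in $[a]$, so $C_*^{[a]}(M,\alpha)=0$ and the homology vanishes there.

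For the growth rate I would invoke Hypothesis H, so that $HC_*^{[a]}(M,\xi)$ is an invariant and is computed, class by class, by the forms above: it equals $H_*(S;\mathbb Q)$ for $[a]=f^k$ with $k\ge1$ and $0$ otherwise, and the generators of $HC_*^{f^k}$ have action close to $kT_0$. Therefore the number of homotopy classes of action at most $L$ carrying a non-trivial summand is asymptotic to $(\dim_{\mathbb Q}H_*(S;\mathbb Q))\,L/T_0$, which is linear in $L$ — modulo the usual care with the order of the limits $\lambda\to0$ and $L\to\infty$, handled as in \cite{Bourgeois02} and Section \ref{section_croissance}. Hence the growth rate of contact homology is linear.

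The main obstacle is the identification of the differential in the non-primitive class $f^k$: one must exclude holomorphic cylinders between orbits over different iterates or with an unexpected index drop, obtain $\partial^2=0$ without Hypothesis H, and verify that every $\gamma_x^k$ is good. All of this is provided by Bourgeois' Morse--Bott machinery (Theorem \ref{theoreme_MB_S_1_invariant_modules} together with the transversality results of \cite{Bourgeois02}); what remains is to assemble it, to record the homotopy-class splitting $C_*=\bigoplus_{m\ge1}C_*^{f^m}$, and to carry out the elementary count in the last step.
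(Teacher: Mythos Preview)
Your proposal is correct and follows exactly the route the paper intends: the corollary is stated without proof as an immediate consequence of Theorem \ref{theoreme_MB_S_1_invariant_modules} (Bourgeois' Morse--Bott identification of the moduli spaces with gradient trajectories), and you have simply unpacked that derivation---the $S^1$-invariant form, the Morse--Bott perturbation, the identification of the differential in each class $f^k$ with the Morse differential on $S$, and the linear count of contributing classes. The only caveats you flag yourself (goodness of $\gamma_x^k$, the diagonal limit $\lambda\to 0$, $L\to\infty$ for the growth rate) are precisely the points the paper defers to \cite{Bourgeois02} and to the filtration machinery of Section \ref{section_croissance}.
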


\begin{corollary}\label{proposition_homologie_T_3}
Fix $n\in\mathbb N^*$.
Let $\alpha_n=\sin(nx)\d y+\cos(nx)\d z $ be the standard contact form on $T^3$. Let $c_y$ and $c_z$ denote the free homotopy classes associated to $\{0\}\times\{0\}\times S^1$ and $\{0\}\times S^1\times \{0\}$. Let $\eta=c_y^{n_y} c_z^{n_{\vphantom{y}z}}$ be a non-trivial homotopy class.
Then there exists a contact form $\alpha'_n$ such that
\[HC_*^{[\eta]}(T^3,\alpha'_n,\mathbb Q)=\bigoplus_{i=1}^n H_*(S^1,\bb Q).\]
The cylindrical contact homology is trivial in all other homotopy classes.
\end{corollary}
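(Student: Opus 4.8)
The plan is to read off the Reeb dynamics of $\alpha_n$, apply the Morse--Bott perturbation scheme of Section~\ref{section_Morse_Bott}, and use Theorem~\ref{theoreme_MB_T_3_invariant_modules} to compute the differential. Write $T^3=(\bb R/2\pi\bb Z)^3$ with coordinates $(x,y,z)$; the Reeb vector field of $\alpha_n$ is $R_{\alpha_n}=\sin(nx)\,\partial_y+\cos(nx)\,\partial_z$, so its flow preserves each torus $\{x=x_0\}$ and translates it in the constant direction $(\sin(nx_0),\cos(nx_0))$. Thus $\{x=x_0\}$ is foliated by Reeb periodic orbits exactly when $(\sin(nx_0),\cos(nx_0))$ is rationally dependent, and then the orbits on it lie in the classes $(c_y^p c_z^q)^k$, $k\ge 1$, where $(p,q)$ is the primitive integer vector positively proportional to $(\sin(nx_0),\cos(nx_0))$. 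Writing $a=c_y^{n_y}c_z^{n_z}=(c_y^p c_z^q)^m$ with $(p,q)$ primitive and $m=\gcd(n_y,n_z)$, the tori carrying an orbit in the class $a$ are the $\{x=x_0\}$ with $(\sin(nx_0),\cos(nx_0))=(p,q)/\sqrt{p^2+q^2}$; since $x\mapsto(\sin(nx),\cos(nx))$ covers the unit circle $n$ times there are exactly $n$ of them, say $T_1,\dots,T_n$, each carrying a single circle of orbits in the class $a$ (the $m$-fold cover of its primitive orbit), and the elementary computation $\mathcal A(\gamma)=\int_\gamma\alpha_n$ (the integrand being identically $1$ along such an orbit) shows that all these orbits have the common action $2\pi m\sqrt{p^2+q^2}=2\pi\|(n_y,n_z)\|$. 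Finally no Reeb orbit winds in the $x$-direction, so every periodic orbit of $\alpha_n$ represents a non-trivial class of the form $c_y^{*}c_z^{*}$; in particular $\alpha_n$ is hypertight and its cylindrical complex vanishes in every class not of this form and in the trivial class.

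Next I would perturb. The form $\alpha_n$ is of Morse--Bott type, and for $L\ge 2\pi\|(n_y,n_z)\|$ I choose on each quotient circle $S_{L'}$, $L'\in\sigma(\alpha_n)\cap[0,L]$, a Morse function with one maximum and one minimum, obtaining the perturbed form $\alpha'_n=\alpha_{\lambda,L}=(1+\lambda\overline{f}_L)\alpha_n$ for $\lambda$ small, which is again hypertight. Its periodic orbits of period $\le L$ in the class $a$ are then the $2n$ non-degenerate orbits $\gamma^{(i)}_+,\gamma^{(i)}_-$, $i=1,\dots,n$, arising from the maximum and minimum on the circle $T_i/S^1$; a Conley--Zehnder computation in the spirit of \cite{Bourgeois02} shows these orbits are good and that $\gamma^{(i)}_+$ and $\gamma^{(i)}_-$ have indices differing by one. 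To compute $\partial$ on $C^a_*(T^3,\alpha'_n)$ I invoke Theorem~\ref{theoreme_MB_T_3_invariant_modules} together with the Remark extending it to the forms $\sin(nx)\d y+\cos(nx)\d z$: for orbits lying on the same torus the moduli space $\overline{\mathcal M}(\gamma^{(i)}_+,\gamma^{(i)}_-,J_\lambda)$ consists of exactly two points carrying opposite signs, so it contributes $0$ to $\partial$, while for orbits on distinct tori the moduli space is empty. As the $\gamma^{(i)}_\pm$ are the only class-$a$ orbits of $\alpha'_n$, this forces $\partial\equiv 0$, whence $HC^a_*(T^3,\alpha'_n,\bb Q)=C^a_*(T^3,\alpha'_n)\cong\bigoplus_{i=1}^n H_*(S^1,\bb Q)$, the $i$-th summand being the homology of the circle $T_i/S^1$. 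For a class with non-zero $x$-winding, or the trivial class, there is no periodic orbit and the homology vanishes.

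For the growth rate statement (under Hypothesis~H) I would run the same analysis in all classes at once: each non-zero lattice vector $(n_y,n_z)\in\bb Z^2$ yields a class $a$ with $HC^a_*\cong\bb Q^{2n}$ concentrated at action $\sim\|(n_y,n_z)\|$, and every other class contributes nothing, so the total rank of $\bigoplus_a HC^a_*$ in action $\le L$ is $2n$ times the number of non-zero lattice points in a disk of radius proportional to $L$, which grows like a constant times $L^2$. Patching the finite-action computations for the forms $\alpha_{\lambda,L}$ as $L\to\infty$ by means of the continuation maps of Theorem~\ref{theoreme_Phi_0_1} identifies this with an invariant of $\xi$ --- this is precisely where Hypothesis~H is used --- and following the formalism of Section~\ref{section_croissance} and \cite{BourgeoisColin05} one concludes that the growth rate of contact homology is quadratic. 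The step I expect to be the main obstacle is not the pseudo-holomorphic count, which Theorem~\ref{theoreme_MB_T_3_invariant_modules} supplies outright (in particular bypassing the multiple-cover transversality issue when $a$ is not primitive), but rather the bookkeeping that makes the growth-rate statement meaningful: the Morse--Bott perturbation $\alpha_{\lambda,L}$ depends on the action cutoff $L$, so one must carefully glue the finite-action pieces using the continuation maps and the invariance granted by Hypothesis~H, while also verifying hypertightness of the perturbed forms and the uniformity of the parity bookkeeping across the $n$ tori.
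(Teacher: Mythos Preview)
Your proposal is correct and follows exactly the route the paper intends: the corollary is stated without proof, as an immediate consequence of Theorem~\ref{theoreme_MB_T_3_invariant_modules} (and the Remark extending it to $\alpha_n$), and you have supplied precisely the missing details---the enumeration of the $n$ tori in each class $a$, the Morse--Bott perturbation, and the vanishing of $\partial$ from the two oppositely-signed cylinders per torus. The bookkeeping worries you flag at the end (hypertightness and absence of long orbits in class $a$ for the perturbed form, and the gluing of the finite-action computations via continuation maps for the growth-rate claim) are exactly the points the paper handles later in the analogous walled setting (Lemmas~\ref{perturbation_morse} and~\ref{OP_et_points_critiques_cas_cloisonné}); for $T^3$ the argument is strictly easier since there is no boundary to worry about.
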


Note that contact homology distinguishes between the contact structures $\ker(\alpha_n)$.
Following Corollaries~\ref{corolary_bundle} and \ref{proposition_homologie_T_3} and assuming Hypothesis~\ref{H2}, we obtain that the growth rate of contact homology is linear in the circle bundle case and quadratic for $T^3$. In this paper, we will consider the case where the contact manifold is obtained by gluing together pieces from these two examples.
We now turn to the definition of the growth rate of contact homology.
\section{Growth rate of contact homology}\label{section_croissance}

\subsection{Algebraic setting}

The \emph{growth rate} of a function $f :\mathbb R_+\to \mathbb R_+$ is said to be \emph{polynomial of order  $\leq n$} if there exists $a>0$ such that $f(x)\leq ax^n$ for all ~$x\in\mathbb R_+$. It is said to be \emph{exponential} if there exist $a>0$ and $b>0$ such that $f(x)\geq a\exp(bx)$ for all~$x\in\mathbb R_+$. More generally, we can define the \emph{growth rate type} of a function: two non-deceasing functions $f :\mathbb R_+\to \mathbb R_+$ and $g :\mathbb R_+\to \mathbb R_+$ have the same growth rate type if there exists $C>0$ such that
\[h\left(\frac{x}{C}\right)\leq g(x)\leq h(Cx)\]
for all~$x\in\mathbb R_+$ (see for instance \cite{delaHarpe00}). We call two such function \emph{equivalent} and denote by  $\Gamma(f)$ the associated equivalence class. With this definition, if $f$ is equivalent to a polynomial of degree $n$ its growth rate is polynomial of order $n$ and if $f$ is equivalent to an exponential, its growth rate is exponential. Note that our definition gives us more precise informations that the growth rate of symplectic homology~\cite{McLean10} given by the formula
\[\limsup_{x\to\infty} \frac{\log\big(\max(f(x),1)\big)}{\log(x)}\] and commonly used in other parts of topology~\cite{KatokHasselblatt95}.

The following algebraic preliminaries are similar to \cite{McLean10}. 
A \emph{filtered directed system} is a family of vector spaces $(E_x)_{x\in[0,\infty)}$ such that for all $x_1\leq x_2$, there exists a linear map $\phi_{x_1,x_2}: E_{x_1}\longrightarrow E_{x_2}$ such that 
\begin{enumerate}
  \item  $\phi_{x_1,x_1}=\Id$  for all $x_1\geq 0$,
  \item $\phi_{x_1,x_3}=\phi_{x_2,x_3}\circ\phi_{x_1,x_2}$ for all $0\leq x_1\leq x_2 \leq x_3$.
\end{enumerate}
A filtered directed system admits a direct limit $E=\lim_{x\to \infty} E_x$. By definition, there exist maps $\phi_x:E_x\to E$ such that the following diagram commutes for all $0\leq x_1\leq x_2$.
\begin{center}
 \begin{tikzpicture}[shorten >=1pt,node distance=2cm,auto]

  \node      (A)                       {$E_{x_1}$};
  \node      (B) [below right of=A]  {$E$};
  \node      (C) [above right  of=B] {$E_{x_2}$};

  \path[->] (A) edge              node        {$\phi_{x_1,x_2}$} (C)
                  edge            node [swap] {$\phi_{x_1} $ } (B)
            (C) edge              node        {$\phi_{x_2} $ } (B);
 \end{tikzpicture}
\end{center}
In what follows, we will assume that $E_x$ is a finite dimensional space for all $x\geq 0$.
\begin{definition}
The \emph{growth rate} $\Gamma((E_x)) $ of $(E_x)$ is the growth rate of $x\mapsto\rank(\phi_x)$.
\end{definition} 

A \emph{morphism} of filtered directed systems from $(E_x)_{x\in[0,\infty)}$ to $(F_x)_{x\in[0,\infty)}$ consists of a positive number $C$ and a family of linear maps $\Phi_x :E_x\longrightarrow F_{Cx}$ such that the following diagram commutes for all $0\leq x_1\leq x_2$
\begin{center}
 \begin{tikzpicture}[shorten >=1pt,auto]
  \node      (A)                       {$E_{x_1}$};
  \node      (B) [right=of A]         {$F_{Cx_1}$};
  \node      (C) [below=of A]         {$E_{x_2}$};
  \node      (D) [right=of C]         {$F_{Cx_2}$};

  \path[->] (A) edge              node [swap] {$\phi^E_{x_1,x_2}$} (C)
                edge              node        {$\Phi_{x_1} $ } (B)
             (C)   edge              node        {$\Phi_{x_2} $ } (D)
            (B) edge              node        {$\phi^F_{Cx_1,Cx_2}$} (D);
 \end{tikzpicture}
\end{center}
Two systems $(E_x)$ and $(F_x)$ are \emph{isomorphic} if there exists a morphism $(C,\Phi)$ from $(E_x)$ to $(F_x)$ and a morphism $(C',\Psi)$ from $(F_x)$ to $(E_x)$ such that, for all $x\geq 0$, \[\Psi_{Cx}\circ\Phi_x=\phi^E_{x,CC'x} \text{ and } \Phi_{C'x}\circ\Psi_x=\phi^F_{x,CC'x}.\]

\begin{lemma}\label{lemme_croissance}
Two isomorphic filtered directed systems have the same growth rate.
\end{lemma}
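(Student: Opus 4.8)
The plan is to reduce the statement to a two–sided comparison of the non-decreasing functions $x\mapsto\rank(\phi^E_x)$ and $x\mapsto\rank(\phi^F_x)$. First I would record that these functions are indeed non-decreasing: for $x_1\le x_2$ the cocone identity $\phi^E_{x_1}=\phi^E_{x_2}\circ\phi^E_{x_1,x_2}$ gives $\im(\phi^E_{x_1})\subseteq\im(\phi^E_{x_2})$, hence $\rank(\phi^E_{x_1})\le\rank(\phi^E_{x_2})$, and likewise for $F$. So it makes sense to ask whether $g(x):=\rank(\phi^E_x)$ and $h(x):=\rank(\phi^F_x)$ are equivalent in the sense recalled above, and by definition $\Gamma((E_x))=\Gamma((F_x))$ exactly when they are.

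Next I would promote the two morphisms witnessing the isomorphism to maps between the direct limits. Given a morphism $(C,\Phi)$ from $(E_x)$ to $(F_x)$, the family $\bigl(\phi^F_{Cx}\circ\Phi_x:E_x\to F\bigr)_{x\ge 0}$ is compatible with the system $(E_x)$: for $x_1\le x_2$ the commuting square of the morphism gives $\Phi_{x_2}\circ\phi^E_{x_1,x_2}=\phi^F_{Cx_1,Cx_2}\circ\Phi_{x_1}$, and post-composing with $\phi^F_{Cx_2}$ yields $\phi^F_{Cx_2}\circ\Phi_{x_2}\circ\phi^E_{x_1,x_2}=\phi^F_{Cx_1}\circ\Phi_{x_1}$. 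The universal property of the direct limit then provides a linear map $\Phi_\infty:E\to F$ with $\Phi_\infty\circ\phi^E_x=\phi^F_{Cx}\circ\Phi_x$ for all $x$; applying this to $(C',\Psi)$ yields $\Psi_\infty:F\to E$ with $\Psi_\infty\circ\phi^F_y=\phi^E_{C'y}\circ\Psi_y$ for all $y$.

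Then I would use the isomorphism relation $\Psi_{Cx}\circ\Phi_x=\phi^E_{x,CC'x}$. Post-composing it with $\phi^E_{CC'x}$ and invoking $\phi^E_{CC'x}\circ\phi^E_{x,CC'x}=\phi^E_x$ together with $\phi^E_{CC'x}\circ\Psi_{Cx}=\Psi_\infty\circ\phi^F_{Cx}$ (the previous paragraph with $y=Cx$) gives $\phi^E_x=\Psi_\infty\circ\phi^F_{Cx}\circ\Phi_x$, whence $\rank(\phi^E_x)\le\rank(\phi^F_{Cx})$ for all $x\ge 0$. The symmetric manipulation with $\Phi_{C'x}\circ\Psi_x=\phi^F_{x,CC'x}$ gives $\rank(\phi^F_x)\le\rank(\phi^E_{C'x})$ for all $x\ge 0$. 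Setting $D:=\max(C,C')$ and using the monotonicity from the first step, these become $\rank(\phi^F_{x/D})\le\rank(\phi^E_x)\le\rank(\phi^F_{Dx})$ for all $x$, which is exactly the relation defining equivalence of $g$ and $h$; hence $\Gamma((E_x))=\Gamma((F_x))$.

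The only point requiring care is the passage to the direct limit and the compatibility of the two induced maps with the isomorphism identities; once $\Phi_\infty$ and $\Psi_\infty$ are in place, everything reduces to bookkeeping with the cocone relations and monotonicity. I note that one could bypass the direct limit altogether, using that $\rank(\phi^E_x)=\min_{y\ge x}\rank(\phi^E_{x,y})$ because the $E_x$ are finite-dimensional and the ranks $\rank(\phi^E_{x,y})$ are non-increasing in $y$; but the limit formulation keeps the argument shortest.
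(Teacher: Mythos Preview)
Your argument is correct and follows essentially the same route as the paper: both produce the induced maps on the direct limits (the paper's $u,v$ are your $\Phi_\infty,\Psi_\infty$) and use the factorisation $\phi^E_x=\Psi_\infty\circ\phi^F_{Cx}\circ\Phi_x$ to obtain the rank inequalities. Your write-up is simply more explicit than the paper's terse diagram, in particular in spelling out the monotonicity step needed to pass to a single constant $D=\max(C,C')$.
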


\begin{proof}
Consider two filtered directed systems $(E_x)$ and $(F_x)$.
By definition, the following diagram
\begin{center}
 \begin{tikzpicture}[shorten >=1pt,auto]
\useasboundingbox (-3,-3.7) rectangle (2.5,0.8);
  \node   (A)                      {$E_{x_1}$};
  \node   (B) [right=of A]         {$\lim E$};
  \node   (C) [below=of A]         {$F_{Cx_1}$};
  \node   (D) [below=of B]         {$\lim F$};
  \node   (E) [below=of C]         {$E_{CC'x_1}$};
  \node   (F) [below=of D]         {$\lim E$}; 
  \node  (F)  [below=of D]         {$\lim E$};
   \node  (G) at(-3,-1.5)        {$\Id$};
   
  \path[->] (A) edge              node [swap] {} (C)
                edge              node  [swap]      {$\phi_{x_1}$} (B)
                edge[bend left,out=-70,in=-120] node[swap]{$\phi_{x,CC'x}$}(E)
            (C) edge              node [swap] { } (E)
                edge              node  [swap]      {$\psi_{Cx_1}$} (D)
            (B) edge              node        {$u$} (D)
            (D) edge              node        {$v$} (F)
            (E) edge              node        {} (F);
            
 \draw   [->] (B) .. controls (-4,3) and (-4,-6) .. (F);
       
 \end{tikzpicture}
\end{center}
 commutes. Thus $\rank(\phi_{x_1})\leq\rank(\psi_{Cx_1})$. Similarly $\rank(\psi_{x_1})\leq\rank(\phi_{C'x_1})$.
\end{proof}

\subsection{Action filtration}

Let $M$ be a compact manifold and $\alpha$ be a hypertight non-degenerate contact form on $M$. Fix $L>0$ and let $C_{\leq L}^\t{cyl}(M,\alpha)$ be the $\bb Q$-vector space generated by the good periodic Reeb orbits with period smaller than $L$. This is a finite dimensional vector space. Under Hypothesis H, since the differential decreases the action, $(C_{\leq L}^\t{cyl}(M,\alpha),\partial_{\leq L})_{L>0}$ is a chain complex. We denote by $(HC_{\leq L}^\t{cyl}(M,\alpha,J))_{L>0}$ the associated homology. The inclusion \[i: C_{\leq L}^\t{cyl}(M,\alpha)\longrightarrow C_{\leq L'}^\t{cyl}(M,\alpha)\] induces a linear map in homology for all $L'\geq L$. Similarly, given a set of free homotopy classes $\Lambda$, for all $L>0$ we define a chain complex $(C_{\leq L}^{\Lambda}(M,\alpha),\partial_{\leq L})$ and a homology $HC_{\leq L}^\Lambda(M,\alpha,J)$.
\begin{fact}
The families $(HC_{\leq L}^\t{cyl}(M,\alpha,J))_{L>0}$ and $(HC_{\leq L}^\Lambda(M,\alpha,J))_{L>0}$ are filtered directed systems whose morphisms are induced by inclusions.
Furthermore
\begin{align*}
\lim_{\to} HC_{\leq L}^\t{cyl}(M,\alpha,J)&=HC_*^\t{cyl}(M,\alpha,J)\\
\lim_{\to}HC_{\leq L}^\Lambda(M,\alpha,J)&=HC_*^\Lambda(M,\alpha,J).
\end{align*}
\end{fact}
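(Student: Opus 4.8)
The plan is to verify the two claimed assertions in turn: that the action-filtered homologies form filtered directed systems with inclusion-induced morphisms, and that their direct limit is the full (cylindrical or $\Lambda$-restricted) contact homology.

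First I would set up the filtered directed system structure. The key point is that the contact homology differential strictly decreases the action $\mathcal A(\gamma)=\int_\gamma\alpha$: if $\gamma'$ appears in $\partial\gamma$ with nonzero coefficient, there is a finite-energy holomorphic cylinder positively asymptotic to $\gamma$ and negatively asymptotic to $\gamma'$, and Stokes' theorem (as recalled in the excerpt, $E(u)=\mathcal A(\gamma)$ and $E(u)\ge 0$ with positive contribution from the negative end) gives $\mathcal A(\gamma')\le\mathcal A(\gamma)$, with equality only for trivial cylinders which do not contribute to $\partial$. Hence $C^{\mathrm{cyl}}_{\le L}(M,\alpha)$ is a subcomplex of $C^{\mathrm{cyl}}_{\le L'}(M,\alpha)$ for $L\le L'$, and the inclusion is a chain map; it induces $i_{L,L'}\colon HC^{\mathrm{cyl}}_{\le L}(M,\alpha,J)\to HC^{\mathrm{cyl}}_{\le L'}(M,\alpha,J)$. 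The axioms $\phi_{x_1,x_1}=\Id$ and $\phi_{x_1,x_3}=\phi_{x_2,x_3}\circ\phi_{x_1,x_2}$ follow immediately from functoriality of homology applied to the obvious inclusions of subcomplexes. Finite-dimensionality of each $C^{\mathrm{cyl}}_{\le L}(M,\alpha)$ holds because $\alpha$ is non-degenerate and hypertight on a closed manifold, so periodic orbits are isolated and only finitely many have action $\le L$ (the action spectrum is discrete and proper). The same argument verbatim handles the $\Lambda$-restricted complexes, since restricting to orbits in free homotopy classes in $\Lambda$ is compatible with the action filtration and with $\partial$.

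Second I would identify the direct limit. Since $C^{\mathrm{cyl}}_*(M,\alpha)=\bigcup_{L>0}C^{\mathrm{cyl}}_{\le L}(M,\alpha)$ as a chain complex — every generator, being a single periodic orbit of some finite action, lies in $C^{\mathrm{cyl}}_{\le L}$ for $L$ large, and $\partial$ restricted to such a generator already lands in the same $C^{\mathrm{cyl}}_{\le L}$ by the action-decreasing property — the full complex is the filtered colimit of its subcomplexes. Homology commutes with filtered (directed) colimits of chain complexes of $\mathbb Q$-vector spaces: a cycle in $C^{\mathrm{cyl}}_*$ is a cycle in some $C^{\mathrm{cyl}}_{\le L}$, and a boundary in $C^{\mathrm{cyl}}_*$ is already a boundary in some $C^{\mathrm{cyl}}_{\le L'}$. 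Therefore $\lim_{\to}HC^{\mathrm{cyl}}_{\le L}(M,\alpha,J)=HC^{\mathrm{cyl}}_*(M,\alpha,J)$, and likewise in the $\Lambda$-restricted case.

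I do not expect any serious obstacle here: the statement is essentially the standard observation that an action-filtered Floer-type complex yields a persistence module whose colimit recovers the unfiltered homology. The only point requiring a little care is the strictness of the action decrease under $\partial$ — one must note that a holomorphic cylinder with $E(u)=0$ is a trivial cylinder (by the energy-zero statement recalled from \cite{BEHWZ03}) and so contributes nothing to the differential, which is why genuinely distinct orbits related by $\partial$ have strictly (or at least weakly, which suffices) smaller action; weak inequality $\mathcal A(\gamma')\le\mathcal A(\gamma)$ is in fact all that is needed for $C^{\mathrm{cyl}}_{\le L}$ to be a subcomplex. For the $\Lambda$-restricted version one additionally uses that $\partial$ preserves free homotopy classes, so restriction to $\Lambda$ is well-defined on the chain level, which the excerpt has already set up in the definition of partial cylindrical homology.
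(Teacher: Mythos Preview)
Your proposal is correct. The paper does not actually prove this statement: it is presented as a \textbf{Fact} with no argument given, relying on the preceding sentence ``Since the differential decreases the action, $(C_{\leq L}^{\mathrm{cyl}}(M,\alpha),\partial_{\leq L})_{L>0}$ is a chain complex'' and on the standard observation that homology commutes with filtered colimits. Your write-up simply spells out these two points in detail---the action-decreasing property of $\partial$ via Stokes' theorem, functoriality of homology giving the directed-system axioms, and the colimit identification via the exhaustion $C^{\mathrm{cyl}}_*(M,\alpha)=\bigcup_{L>0}C^{\mathrm{cyl}}_{\le L}(M,\alpha)$---so there is nothing to compare: you have supplied the routine verification the paper omits.
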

Let $M$ be a compact manifold and $\alpha$ be a non-degenerate contact form on $M$ such that
$(C_*(M,\alpha),\partial)$ admits an augmentation $\epsilon$. Then $\partial^\epsilon _1$ decreases the action on $A^\epsilon(M,\alpha)$ and we can define a filtered directed system.
\begin{definition}
The \emph{growth rate} of contact homology is the growth rate of the associated filtered directed system.
\end{definition}
\begin{remark}\label{remark_growth}
As
$\rank(\phi_{L})\leq\dim HC_{\leq L}^{\t{cyl}}(M,\alpha,J)\leq\dim C_{\leq L}^{\t{cyl}}(M,\alpha)$,
if the growth rate of contact homology is exponential, the number of periodic Reeb orbits grows exponentially with the period.
\end{remark}

\subsection{Invariance of the growth rate of contact homology}

\begin{fact}\label{fait_filtration}
The maps from Section~\ref{section_def_HC} restrict to maps denoted 
\begin{align*}
\psi_{\leq L}((\alpha_1,J_1),(c\alpha_0,J_0^c))\\
\Psi_{\leq L}((\alpha_1,J_1),(c\alpha_0,J_0^c))
\end{align*}
in the filtered case.
In addition $\theta(\alpha, J,c)$ and $\Theta(\alpha, J,c)$ restrict to maps
\begin{align*}
\theta_{\leq L}(\alpha, J,c) :&(A_{\leq L}(M,\alpha),\partial_J)\to (A_{\leq cL}(M,c\alpha),\partial_{J^c})\\
\Theta_{\leq L}(\alpha, J,c) :&HC_{\leq L}(M,\alpha,J)\to HC_{\leq cL}(M,c\alpha,J^c).
\end{align*}
Analogous restrictions exist in the cylindrical and linearized situations.
\end{fact}

\begin{fact}\label{fait_filtration_1_2}
Let $\phi_1, \phi_2 :(A(M,\alpha),\partial_{J})\to(A(M,\alpha_0),\partial_{J_0})$ be two homotopic chain maps and $\epsilon_0$ be an augmentation on $(A(M,\alpha_0),\partial_{J_0})$. Let $\epsilon_1$ and $\epsilon_2$ denote the pull-back augmentations by $\phi_1$ and $\phi_2$. The map $\phi(\epsilon_1,\epsilon_2)$ from Theorem~\ref{theorem_augmentation} induces a map
\begin{equation*}
\Phi_{\leq L}(\epsilon_1,\epsilon_2) :HC_{\leq L}^{\epsilon_1}(M,\alpha,J)\to HC^{\epsilon_2}_{\leq L}(M,\alpha,J).
\end{equation*}
\end{fact}

\begin{fact}\label{fait_filtration_1}
Let $\epsilon_0$ be an augmentation on $(A(M,\alpha),\partial_{J})$
In addition to the properties from Theorem \ref{theoreme_Phi_0_1} and Proposition~\ref{theoreme_Phi_0_1_1}, the maps defined in Fact~\ref{fait_filtration} satisfy the following properties. 
\begin{enumerate}
  \item For all $0<c<1$,
  \begin{equation*}
  \Theta_{\leq L}\left(c\alpha, J^c,\frac{1}{c}\right)\circ\Psi_{\leq L}((\alpha,J),(c\alpha,  J ^c))
  \end{equation*}
  is the map induced by the inclusion $HC_{\leq L}(\alpha,J)\to HC_{\leq \frac{L}{c}}(\alpha,J)$.
  \item  If
  $\phi_1=\psi((\alpha_1,J_1),(c\alpha,J^c))\circ \psi((\alpha,J),(\alpha_1,J_1))$ and
  $\phi_2=\theta(\alpha,J,c)$
  then
  \begin{equation*}
  \Phi_{\leq \frac{L}{c}}(\epsilon_2,\epsilon_1)\circ\Theta^{\epsilon_2}_{\leq L}\left(c\alpha, J^c,\frac{1}{c}\right)\circ\Psi^{\epsilon_0}_{\leq L}((\alpha,J),(c\alpha,  J ^c))
  \end{equation*}
  is the morphism induced by the inclusion $HC^{\epsilon_1}_{\leq L}(\alpha,J)\to HC^{\epsilon_1}_{\leq \frac{L}{c}}(\alpha,J)$ where $\epsilon_1$ and $\epsilon_2$ denote the pull-back augmentations by $\phi_1$ and $\phi_2$.
\end{enumerate}
\end{fact}

\begin{proposition}\label{proposition_fds}
 Let $\alpha_0$ and $\alpha_1$ be two hypertight contact forms on a compact manifold $M$. Assume that $\alpha_0$ and $\alpha_1$ are homotopic through a family of contact forms. Under Hypothesis H, the two filtered directed systems $(HC_{\leq L}^\t{cyl}(M,\alpha_0))_{L\geq 0}$ and $(HC_{\leq L}^\t{cyl}(M,\alpha_1))_{L\geq 0}$ are isomorphic.
\end{proposition}

\begin{proof}
The morphisms between  $HC_{\leq L}^\t{cyl}(M,\alpha_1)$ and $HC_{\leq L}^\t{cyl}(M,\alpha_0)$ are
\begin{gather*}
\phi_L:HC_{\leq L}^\t{cyl}(M,\alpha_1)\to HC_{\leq \frac{L}{c}}^\t{cyl}(M,\alpha_0)\\
\phi_L= \Theta^\t{cyl}_{\leq L}\left(c\alpha_0,J_0^c,\frac{1}{c}\right)\circ\Psi^\t{cyl}_{\leq L}\left((\alpha_1,J_1), (c\alpha_0,J_0^c)\right)
\end{gather*} 
and
\begin{gather*}
\phi'_L:HC_{\leq L}^\t{cyl}(M,\alpha_0)\to HC_{\leq \frac{L}{c'}}^\t{cyl}(M,\alpha_1)\\
\phi'_L= \Psi^\t{cyl}_{\leq \frac{L}{c'}}\left(\left(\frac{\alpha_0}{c'},J_0^\frac{1}{c'}\right), (\alpha_1,J_1)\right)\circ\Theta^\t{cyl}_{\leq L}\left(\alpha_0,J_0,\frac{1}{c'}\right).
\end{gather*}
These morphisms give an isomorphism by Fact \ref{fait_filtration_1}.
\end{proof}

\begin{corollary}\label{proposition_invariance_croissance_cylindrique}
 Let $\alpha_0$ and $\alpha_1$ be two hypertight contact forms on a compact manifold $M$. Assume that $\alpha_0$ and $\alpha_1$ are homotopic through a family of contact forms. Under Hypothesis H, the associated cylindrical contact homologies have the same growth rate.
\end{corollary}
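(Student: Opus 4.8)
The plan is to deduce Corollary~\ref{proposition_invariance_croissance_cylindrique} directly from the preceding proposition together with the lemma asserting that isomorphic filtered directed systems have the same growth rate. First I would invoke the proposition just proved: under Hypothesis~H, for homotopic hypertight contact forms $\alpha_0$ and $\alpha_1$ on the compact manifold $M$, the two filtered directed systems $\big(HC_{\leq L}^\t{cyl}(M,\alpha_0)\big)_{L>0}$ and $\big(HC_{\leq L}^\t{cyl}(M,\alpha_1)\big)_{L>0}$ are isomorphic in the sense defined in the algebraic-setting subsection, the isomorphism being furnished by the pair of morphisms $(\phi_L)$ and $(\phi'_L)$ built from the maps $\Psi^\t{cyl}$ and $\Theta^\t{cyl}$, whose composition relations are exactly the content of Fact~\ref{fait_filtration}.

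Next I would apply the lemma stating that two isomorphic filtered directive systems have the same growth rate. The only point requiring a word of care is the bookkeeping of the multiplicative constants: the morphism $(C,\Phi)$ lands in $F_{Cx}$ rather than $F_x$ (here $C=1/c$ and $C'=1/c'$ from the proportionality factors used to homotope $\alpha_1$ to $c\alpha_0$ and $\alpha_0$ to $\alpha_1/c'$), but this is harmless because the growth rate $\Gamma\big((E_x)\big)$ was defined as the growth-rate \emph{equivalence class} of $x\mapsto\rank(\phi_x)$, and rescaling the variable by a fixed positive constant does not change this equivalence class. Concretely, the commuting diagram in the proof of the lemma gives $\rank(\phi_{x})\leq\rank(\psi_{Cx})$ and $\rank(\psi_{x})\leq\rank(\phi_{C'x})$ for all $x\geq 0$, and combined with monotonicity of the ranks in the filtration parameter these two inequalities sandwich each function between dilates of the other, which is precisely the equivalence relation defining $\Gamma$.

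Therefore the growth rate of $\big(HC_{\leq L}^\t{cyl}(M,\alpha_0)\big)_{L>0}$ equals that of $\big(HC_{\leq L}^\t{cyl}(M,\alpha_1)\big)_{L>0}$; since by the Fact recorded in the action-filtration subsection these direct limits are $HC_*^\t{cyl}(M,\alpha_0,J_0)$ and $HC_*^\t{cyl}(M,\alpha_1,J_1)$ respectively, and the growth rate of contact homology was defined to be the growth rate of the associated filtered directed system, the conclusion follows. I do not expect a genuine obstacle here: the corollary is essentially a formal consequence of the proposition and the two algebraic lemmas, and the proof is a one-line citation of each. If anything needs emphasis, it is merely that one must record explicitly that the growth-rate equivalence relation is insensitive to the constants $c,c'$, so that the ``isomorphism of filtered directed systems'' provided by the proposition — which is only an isomorphism up to those constants — still yields equality of growth rates.

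\begin{proof}
By the previous proposition, the filtered directed systems $HC_{\leq L}^\t{cyl}(M,\alpha_0)$ and $HC_{\leq L}^\t{cyl}(M,\alpha_1)$ are isomorphic. The conclusion follows from the lemma asserting that isomorphic filtered directive systems have the same growth rate, together with the definition of the growth rate of contact homology as the growth rate of the associated filtered directed system.
\end{proof}
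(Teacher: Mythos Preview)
Your proposal is correct and matches the paper's approach exactly: the paper states this result as a corollary with no proof, since it follows immediately from the preceding proposition (isomorphism of the filtered directed systems) together with the lemma that isomorphic filtered directive systems have the same growth rate. Your written proof is precisely the one-line citation the paper leaves implicit.
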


\begin{proposition}\label{proposition_invariance_croissance_partielle}
Let $\alpha_0$ and $\alpha_1$ be two hypertight contact forms on a compact manifold $M$. Assume that $\alpha_0$ and $\alpha_1$ are homotopic through a family of contact forms. Let $\Lambda$ be a set of primitive free homotopy classes of $M$. Then the associated cylindrical partial contact homologies have the same growth rate.
\end{proposition}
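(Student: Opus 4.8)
The plan is to mimic, in the partial setting, the proof of Corollary~\ref{proposition_invariance_croissance_cylindrique}: one produces mutually inverse-up-to-shift morphisms of filtered directed systems between $(HC^\Lambda_{\leq L}(M,\alpha_0))_{L>0}$ and $(HC^\Lambda_{\leq L}(M,\alpha_1))_{L>0}$ and invokes the lemma that two isomorphic filtered directed systems have the same growth rate. The only genuinely new point is that restricting to primitive free homotopy classes removes the need for Hypothesis~H. As a first step I would note that every map used in that argument — the scaling isomorphisms $\theta^{\mathrm{cyl}}(\alpha,J,c)$, $\Theta^{\mathrm{cyl}}$ and the cobordism maps $\psi^{\mathrm{cyl}}$, $\Psi^{\mathrm{cyl}}$, together with their filtered analogues from Fact~\ref{fait_filtration} — is defined by a signed count of holomorphic cylinders, either in a symplectization or in a symplectic cobordism interpolating between two symplectizations. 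Such a cylinder gives a homotopy between its two asymptotic orbits, so its positive and negative asymptotics lie in the same free homotopy class of $M$ (and the scaling diffeomorphism $(\tau,x)\mapsto(c\tau,x)$ does not change free homotopy classes), whence all these maps preserve the subspaces generated by Reeb orbits in $\Lambda$ and restrict to morphisms $\theta^\Lambda,\Theta^\Lambda,\psi^\Lambda,\Psi^\Lambda$ of the partial complexes and the partial filtered systems; the composition identities of Theorem~\ref{theoreme_Phi_0_1} and of Fact~\ref{fait_filtration} restrict along with them.

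Second, I would justify that no abstract perturbation is needed. Because $\Lambda$ consists of primitive classes, every Reeb orbit representing a class of $\Lambda$ is simple (in particular there are no bad orbits to worry about) and every holomorphic cylinder with both asymptotics in $\Lambda$ is somewhere injective. Moreover $\alpha_0$ and $\alpha_1$ are hypertight and the symplectizations and cobordisms are exact, so SFT compactness rules out sphere and plane bubbling: a sequence of index-$\le 1$ such cylinders converges to a broken configuration which is a finite chain of cylinders, each still asymptotic to orbits in $\Lambda$ and hence somewhere injective. By Dragnev~\cite{Dragnev04} a generic choice of the auxiliary almost complex structures — $S^1$-invariant and adapted on the symplectizations, compatible and interpolating on the cobordisms, with the corresponding genericity for the one-parameter families of cobordisms witnessing the composition relations — makes every such moduli space of dimension $0$ or $1$ regular. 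This is precisely the mechanism by which $HC^\Lambda_*(M,\alpha,J)$ was seen above to be well defined and independent of $J$ and of the hypertight non-degenerate form, now applied to the cobordism maps and chain homotopies as well.

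With these ingredients the argument is formal and identical to that of the preceding proposition. Fixing $c,c'>0$ and suitable interpolating data, set
\[
\phi_L=\Theta^\Lambda_{\leq L}\!\left(c\alpha_0,J_0^c,\tfrac1c\right)\circ\Psi^\Lambda_{\leq L}\!\left((\alpha_1,J_1),(c\alpha_0,J_0^c)\right),\qquad
\phi'_L=\Psi^\Lambda_{\leq \frac{L}{c'}}\!\left(\left(\tfrac{\alpha_0}{c'},J_0^{1/c'}\right),(\alpha_1,J_1)\right)\circ\Theta^\Lambda_{\leq L}\!\left(\alpha_0,J_0,\tfrac1{c'}\right).
\]
By the restricted Fact~\ref{fait_filtration} these are morphisms of filtered directed systems whose two composites are the maps induced by the inclusions $C^\Lambda_{\leq L}\hookrightarrow C^\Lambda_{\leq L/(cc')}$, so $(HC^\Lambda_{\leq L}(M,\alpha_0))_{L}$ and $(HC^\Lambda_{\leq L}(M,\alpha_1))_{L}$ are isomorphic filtered directed systems and therefore have the same growth rate.

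I expect the main obstacle to be the transversality and compactness bookkeeping of the second step: one must verify that \emph{all} moduli spaces and parametrized families entering the definitions of $\psi^\Lambda,\Psi^\Lambda$ and the chain homotopies proving Theorem~\ref{theoreme_Phi_0_1}\,(1)--(4) and Fact~\ref{fait_filtration} — not merely the differentials — stay within somewhere injective cylinders when the asymptotics are primitive, and that the required genericity can be arranged simultaneously for the finitely many auxiliary almost complex structures and families in play (a routine Baire-category argument, but one that has to be carried out carefully). Everything else reduces to the formal statements already proved in the excerpt.
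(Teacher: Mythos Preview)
The paper states Proposition~\ref{proposition_invariance_croissance_partielle} without proof; it is placed between Corollary~\ref{proposition_invariance_croissance_cylindrique} and Proposition~\ref{proposition_invariance_croissance_linearisee} with the evident implication that the argument is the same as for the preceding proposition, except that Dragnev's transversality result for somewhere injective curves replaces Hypothesis~H (this is the mechanism the paper already invokes when defining $HC^\Lambda_*$ for primitive $\Lambda$). Your proposal spells out exactly this implicit argument: restrict $\theta,\Theta,\psi,\Psi$ and Fact~\ref{fait_filtration} to the $\Lambda$-subcomplexes, observe that primitivity of the asymptotics forces all relevant cylinders to be somewhere injective so that generic regularity holds without abstract perturbations, and then run the formal isomorphism-of-filtered-systems argument verbatim.

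Your identification of the one genuine point of care --- that Dragnev's result must be applied not only to the differentials but to the cobordism maps and to the one-parameter families giving the chain homotopies in Theorem~\ref{theoreme_Phi_0_1} --- is correct and is precisely what the paper leaves to the reader. There is no gap in your outline; it is the intended proof, written out in more detail than the paper provides.
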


\begin{proof}
The restrictions to the primitive classes of the morphisms defined in the proof of Proposition \ref{proposition_fds} give an isomorphism between filtered directed systems. Apply Lemma \ref{lemme_croissance} to obtain the desired result.
\end{proof}

\begin{proposition}\label{proposition_invariance_croissance_linearisee}
 Let $\alpha_0$ and $\alpha_1$ be two isotopic contact forms, $J_0$ and $J_1$ be two adapted almost complex structures such that $(A_*(\alpha_0),\partial_{J_0})$ has an augmentation $\epsilon_0$ and $\psi((\alpha_1,J_1),(\alpha_0,J_0))$ exists. Let $\epsilon_1$ be the pull-back augmentation of $\epsilon_0$ (see Section \ref{section_lch}). Then, under Hypothesis H, the two filtered directed systems $(HC_{\leq L}^{\epsilon_1}(\alpha_1,J_1))_{L\geq 0}$ and $(HC_{\leq L}^{\epsilon_0}(\alpha_0,J_0))_{L\geq 0}$ are isomorphic. Thus, the growth rates of linearized contact homology are the same.
\end{proposition}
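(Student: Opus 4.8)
The plan is to transcribe the proof of the cylindrical case (the proposition preceding Corollary~\ref{proposition_invariance_croissance_cylindrique}), systematically replacing the maps $\psi^{\t{cyl}},\Theta^{\t{cyl}}$ by their linearized counterparts $\Psi^{\epsilon},\Theta^{\epsilon},\theta^{\epsilon}$ and carrying along the pull-back augmentation at every stage. Since $\psi((\alpha_1,J_1),(\alpha_0,J_0))$ exists by hypothesis, the pull-back augmentation $\epsilon_1=\epsilon_0\circ\psi((\alpha_1,J_1),(\alpha_0,J_0))$ is defined, so the filtered directed system $HC^{\epsilon_1}_{\leq L}(\alpha_1,J_1)$ makes sense. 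As in the cylindrical argument, I would first fix $c'>0$ large enough that $\psi((\alpha_0/c',J_0^{1/c'}),(\alpha_1,J_1))$ exists (rescaling makes the action of $\alpha_0/c'$ small, so a monotone homotopy to $\alpha_1$ is available), and then $c\in(0,1)$ small enough that $\psi((\alpha_1,J_1),(c\alpha_0,J_0^c))$ exists and $cc'\leq 1$. These are precisely the chain maps used in the cylindrical proof, and each of them, together with the rescaling isomorphisms $\theta$, produces a pull-back augmentation on its source complex.

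Next I would write down the two candidate morphisms of filtered directed systems, exactly mirroring the cylindrical formulas but passing to homology of the linearized complexes. In one direction,
\[\phi_L=\Phi_{\leq L/c}(\epsilon_2,\epsilon_1)\circ\Theta^{\epsilon_2}_{\leq L}\bigl(c\alpha_0,J_0^c,\tfrac1c\bigr)\circ\Psi^{\epsilon_0}_{\leq L}\bigl((\alpha_1,J_1),(c\alpha_0,J_0^c)\bigr),\]
where $\epsilon_2$ is the pull-back augmentation on $A_*(\alpha_1)$ coming from the composed chain map, and $\Phi_{\leq L/c}(\epsilon_2,\epsilon_1)$ is the filtered refinement of the Bourgeois--Ekholm--Eliashberg comparison isomorphism between the homologies of two homotopic augmentations; this is available because the chain map producing $\epsilon_2$ is homotopic to $\psi((\alpha_1,J_1),(\alpha_0,J_0))$ by the composition properties of Theorem~\ref{theoreme_Phi_0_1}(1),(4), and pull-backs of homotopic chain maps are homotopic augmentations. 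In the other direction,
\[\phi'_L=\Phi\circ\Psi^{\epsilon}_{\leq L/c'}\bigl((\alpha_0/c',J_0^{1/c'}),(\alpha_1,J_1)\bigr)\circ\Theta^{\epsilon}_{\leq L}\bigl(\alpha_0,J_0,\tfrac1{c'}\bigr),\]
with the analogous bookkeeping, and one checks from Fact~\ref{fait_filtration} that $(1/c,\phi)$ and $(1/c',\phi')$ are morphisms of filtered directed systems (all constituent maps commute with the inclusion-induced structure maps).

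Finally, to see that these two morphisms witness an isomorphism of filtered directed systems in the sense of the earlier definition, I would invoke Fact~\ref{fait_filtration}(2), which states precisely that $\Phi_{\leq L/c}(\epsilon_2,\epsilon_1)\circ\Theta^{\epsilon_2}_{\leq L}\circ\Psi^{\epsilon_0}_{\leq L}$ is the morphism induced by the inclusion $HC^{\epsilon_1}_{\leq L}(\alpha_1,J_1)\to HC^{\epsilon_1}_{\leq L/c}(\alpha_1,J_1)$; the mirror statement, obtained by exchanging the roles of $\alpha_0$ and $\alpha_1$ and using $\Theta(\alpha_0,J_0,1/c')$, gives the other composition as the inclusion-induced map on $HC^{\epsilon_0}(\alpha_0,J_0)$. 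Because $cc'\leq 1$, the target filtration levels are as required, so $\phi'_{L/c}\circ\phi_L$ and $\phi_{L/c'}\circ\phi'_L$ are the structure maps of the two systems, which is exactly the isomorphism condition. The lemma that isomorphic filtered directed systems have the same growth rate then gives the claim.

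The main obstacle is precisely the augmentation bookkeeping: every application of $\psi$, $\theta$ or $\Theta$ replaces the ambient augmentation by a pull-back, and one must verify that after composing the whole chain of maps the augmentation one lands on is homotopic---not merely equal---to $\epsilon_0$ (respectively $\epsilon_1$), so that the comparison isomorphism $\Phi$ is both available and compatible with the action filtration. This is the only genuinely new ingredient compared with the cylindrical proof, and it is handled by the homotopy statements for composed chain maps in Theorem~\ref{theoreme_Phi_0_1} together with functoriality of pull-back augmentations; all the rest is a line-by-line transcription of the earlier argument.
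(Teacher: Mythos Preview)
Your overall plan---build two morphisms of filtered directed systems using the linearized versions of $\Psi$, $\Theta$ and the augmentation--comparison map $\Phi$, then invoke Fact~\ref{fait_filtration} to show the two compositions are the inclusion maps---is exactly the paper's strategy. But two points in your execution do not match and in fact do not typecheck.

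First, the paper exploits the hypothesis more directly than you do: since $\psi((\alpha_1,J_1),(\alpha_0,J_0))$ is assumed to exist, the paper simply takes
\[
\phi_L=\Psi^{\epsilon_0}_{\leq L}\bigl((\alpha_1,J_1),(\alpha_0,J_0)\bigr):HC^{\epsilon_1}_{\leq L}(\alpha_1,J_1)\longrightarrow HC^{\epsilon_0}_{\leq L}(\alpha_0,J_0)
\]
with constant $C=1$. No rescaling, no augmentation comparison. Only the other morphism $\phi'_L$ needs the rescaling $\alpha_0\mapsto \alpha_0/c$ and the comparison isomorphism $\Phi_{\leq L/c}(\epsilon_0^c,\epsilon_0')$ between two pull-back augmentations on $A_*(\alpha_0/c)$. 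Your extra rescaling by $c$ in the $\phi_L$ direction is harmless in principle but creates the bookkeeping confusion below.

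Second, your formula for $\phi_L$ does not typecheck. You declare $\epsilon_2$ to be ``the pull-back augmentation on $A_*(\alpha_1)$'', so $\Phi_{\leq L/c}(\epsilon_2,\epsilon_1)$ is a map on $HC(\alpha_1)$; but you precompose it with $\Theta^{\epsilon_2}_{\leq L}(c\alpha_0,J_0^c,1/c)\circ\Psi^{\epsilon_0}_{\leq L}$, which lands in $HC(\alpha_0)$. The comparison map must live on the target complex, not the source. Relatedly, your reading of Fact~\ref{fait_filtration}(2) is off: that statement concerns a round trip $\alpha\to\alpha_1\to c\alpha$ and asserts that the \emph{composed} map equals the inclusion on $HC^{\epsilon_1}(\alpha)$. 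It is precisely the tool for showing $\phi'_{L/c}\circ\phi_L$ (respectively $\phi_{L/c'}\circ\phi'_L$) is an inclusion map, not for identifying $\phi_L$ itself with an inclusion. Once you take $\phi_L=\Psi^{\epsilon_0}_{\leq L}$ and place the $\Phi$ comparison inside $\phi'_L$ (between two augmentations on $A_*(\alpha_0/c)$, as the paper does), Fact~\ref{fait_filtration}(2) applies verbatim and the argument closes.
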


\begin{proof}Consider the morphisms
\begin{gather*}
\phi_L:HC_{\leq L}^{\epsilon_1}(M,\alpha_1,J_1)\to HC_{\leq L}^{\epsilon_0}(M,\alpha_0,J_0)\\
\phi_L= \Psi^{\epsilon_0}_{\leq L}\left((\alpha_1,J_1), (\alpha_0,J_0)\right)
\end{gather*} 
and
\begin{gather*}
\phi'_L:HC_{\leq L}^{\epsilon_0}(M,\alpha_0,J_0)\to HC_{\leq \frac{L}{c}}^{\epsilon_1}(M,\alpha_1,J_1)\\
\phi'_L= \Psi^{\epsilon_1}_{\leq \frac{L}{c}}\left(\left(\frac{\alpha_0}{c},J_0^\frac{1}{c}\right), (\alpha_1,J_1)\right)\circ\Phi_{\leq\frac{L}{c}}(\epsilon_0^c,\epsilon'_0)\circ\Theta^{\epsilon_0^c}_{\leq L}\left(\alpha_0,J_0,\frac{1}{c}\right)
\end{gather*}
where $\epsilon^c_0$ is the pull back augmentation of $\epsilon_0$ by $\theta\left(\frac{1}{c}\alpha_0,J^\frac{1}{c}_0,c\right)$ and $\epsilon'_0$ is the pull back by $\psi((\alpha_1,J_1),(\alpha_0,J_0))\circ \psi\left(\left(\frac{1}{c}\alpha_0,J^\frac{1}{c}_0,c\right),(\alpha_1,J_1)\right)$.
These morphisms give an isomorphism by Fact \ref{fait_filtration_1}.
\end{proof}

\section{Positivity of intersection and tori foliated by Reeb orbits}\label{section_positivity_intersection}

Introduced by Gromov \cite{Gromov85} and McDuff \cite{McDuff94}, positivity of intersection states that, in dimension $4$, two distinct pseudo-holomorphic curves $C$ and $C'$ have a finite number of intersection points and that each of these points contributes positively to the algebraic intersection number $C\cdot C'$. In this text, we will only consider the simplest form of positivity of intersection: let $M$ be a $4$-dimensional manifold, $C$ and $C'$ be two $J$-holomorphic curves and $p\in M$ so that $C$ and $C'$ intersect transversely at $p$. Consider $v\in T_p C$ and $v'\in T_p C'$ two non-zero tangent vectors. Then $(v,Jv,v',Jv')$ is an oriented basis of $T_p M$ ($J$ orients $T_p M$). In the contact world, positivity of intersection results in the following lemma.

\begin{lemma}
Let $(M,\xi)$ be a contact manifold, $\alpha$ be a contact form and $J$ be an adapted almost complex structure on the symplectization. Consider $U$ an open subset of $\mathbb C$, $u: U\to \mathbb R\times M$ a $J$-holomorphic curve and $z\in U$ such that $\d u_M(z)$ is injective and transverse to $R_\alpha(u_M(z))$. Then, $R_\alpha(u_M(z))$ is positively transverse to $\d u_M(z)$.
\end{lemma}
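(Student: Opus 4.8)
The plan is to work locally near $p$ and exploit the compatibility conditions on $J$ together with positivity of intersection against a well-chosen auxiliary holomorphic curve, namely a piece of a trivial cylinder. Since $R(f(p))$ is transverse to $\operatorname{im}(\d f_p)$ and $\d f_p$ is injective, the image of $u$ near $p$ is a smooth $J$-holomorphic surface $C$ in $\mathbb R\times M$, and the vector $\frac{\partial}{\partial\tau}$ together with $R_\alpha = J\frac{\partial}{\partial\tau}$ spans a $J$-invariant plane; the condition that $\d f_p$ is transverse to $R(f(p))$ translates (using $J\frac{\partial}{\partial\tau}=R_\alpha$ and $J\xi=\xi$) into the statement that $\frac{\partial}{\partial\tau}\notin\operatorname{im}(\d u_p)$, so by Proposition \ref{proposition_tau_im_du} (or rather the local analogue via Theorem \ref{theoreme_zeros_isoles}) $C$ is transverse at $u(p)$ to the trivial cylinder $Z$ over the Reeb orbit through $f(p)$, or at least to the local $J$-holomorphic disc tangent to $\operatorname{Vect}(\frac{\partial}{\partial\tau},R_\alpha)$.

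First I would set $v$ to be a nonzero vector in $T_pU$ with $\d f_p(v)\neq 0$; write $w = \d u_p(v)\in T_{u(p)}(\mathbb R\times M)$, so $(w, Jw)$ is a basis of $T_{u(p)}C$. Next I take the second curve to be the (local) $J$-holomorphic disc $C'$ through $u(p)$ whose tangent plane is $\operatorname{Vect}\big(\frac{\partial}{\partial\tau}, R_\alpha(f(p))\big)$ — this plane is $J$-invariant by conditions (2) and (3) in the definition of an adapted almost complex structure, and one builds an honest local holomorphic curve with this tangent plane, e.g. the trivial cylinder over the Reeb orbit through $f(p)$ suitably positioned in the $\tau$-direction. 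Then $(\frac{\partial}{\partial\tau}, J\frac{\partial}{\partial\tau}) = (\frac{\partial}{\partial\tau}, R_\alpha(f(p)))$ is a basis of $T_{u(p)}C'$. The transversality hypothesis on $\d f_p$ versus $R(f(p))$, pushed forward, guarantees $C$ and $C'$ meet transversely at $u(p)$, so positivity of intersection in the $4$-manifold $\mathbb R\times M$ applies.

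The core step is then to unwind what positivity of intersection says: $\big(w, Jw, \frac{\partial}{\partial\tau}, R_\alpha(f(p))\big)$ is a positively oriented basis of $T_{u(p)}(\mathbb R\times M)$ for the orientation determined by $J$ (equivalently by $\d(e^\tau\alpha)^2$). I would then project to $T_{f(p)}M$ along $\frac{\partial}{\partial\tau}$: the ordered triple $\big(\d f_p(v), \pi_*(Jw), R_\alpha(f(p))\big)$ — where $\pi$ is the projection $\mathbb R\times M\to M$ — is a positively oriented basis of $T_{f(p)}M$, and one checks $\pi_*(Jw)$ spans the same line as a second tangent vector to $f$ near $p$, so in fact $\big(\d f_p(v), \d f_p(jv), R_\alpha(f(p))\big)$ is a direct basis, which is precisely the assertion that $R(f(p))$ is positively transverse to $\operatorname{im}(\d f_p)$. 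The sign-bookkeeping here — relating the $J$-orientation of $\mathbb R\times M$, the co-orientation of $\xi$, and the induced orientation on $M$, and making sure the $\frac{\partial}{\partial\tau}$-component of $Jw$ does not interfere — is the main obstacle, and I would dispatch it by choosing an adapted frame $(e_1,e_2=Je_1)$ of $\xi$ at $f(p)$ and writing all four vectors $w, Jw, \frac{\partial}{\partial\tau}, R_\alpha$ in the frame $(\frac{\partial}{\partial\tau}, R_\alpha, e_1, e_2)$, reducing the claim to a determinant computation.
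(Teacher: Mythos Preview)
Your approach is essentially the same as the paper's: construct a piece of the trivial cylinder over the Reeb trajectory through $f(p)$ as an auxiliary $J$-holomorphic curve, check it meets $u$ transversely at $u(p)$, and invoke positivity of intersection. The paper's proof is terser---it simply states that positivity of intersection gives the result once the projection to $M$ is smooth---while you spell out the orientation bookkeeping via a frame and determinant; but the substance is identical.
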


\begin{proof}
Let $\gamma : [-\epsilon,\epsilon]\to M$ be an arc in a Reeb trajectory such that $\gamma(0)=u_M(z)$. Consider the holomorphic curve
\[\begin{array}{cclc}
&v:\mathbb R\times [-\epsilon,\epsilon] &\longrightarrow & \mathbb R\times M\\
&(s,t)&\longmapsto& (s+u_{\mathbb R}(z),\gamma(t)).
\end{array}\]
The holomorphic curves $u$ and $v$ intersect transversely at $u(z)$ and $\left(\frac{\partial}{\partial \tau},R_\alpha(u_M(z))\right)$ is an oriented basis for the tangent plane to $v$ at $u(z)$. The projection of $u$ to $M$ is smooth as $\d u_M(z)$ is injective. Positivity of intersection gives the desired result.
\end{proof}
The hypothesis ``$\d u_M(z)$ injective and transverse to $R_\alpha(u_M(z))$'' is generic (see Theorem~\ref{theoreme_zeros_isoles} and Proposition \ref{proposition_tau_im_du}). 
We will use positivity of intersection in the following situation. Let $(M,\xi=\ker(\alpha))$ be a contact manifold with a chart $I\times S^1\times S^1$ where $I$ is an interval and coordinates $(x,y,z)$ such that $\alpha=f(x)\d y+g(x)\d z$. Assume that, for all $x\in I$, the tori $\{x\}\times S^1\times S^1=T_x$ are incompressible in $M$. A torus $T$ is \emph{incompressible} in $M$ if the map $\pi_1(T)\to\pi_1(M)$ is injective. Consider $u:\mathbb R\times S^1\to\mathbb R\times M$ a pseudo-holomorphic cylinder with finite energy and asymptotic to the periodic Reeb orbits $\gamma_+$ and $\gamma_-$ in $M$. Assume that $u$ intersects $\mathbb R\times T_x$ for all $x\in I$.

\begin{lemma}\label{lemme_u_-1}
There exists a nonempty open interval $I_1\subset I$ such that for all $x_0\in I_1$  \[u^{-1}\left(u(\mathbb R\times S^1)\cap(\mathbb R\times T_{x_0})\right)\] is a disjoint union of smooth circles homotopic to $\{*\}\times S^1$. 
\end{lemma}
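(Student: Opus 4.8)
The overall strategy is to combine positivity of intersection with Hofer's asymptotic analysis to control how the cylinder $u$ meets the incompressible tori $\mathbb R\times T_x$. First I would set up the projection $\pi_{yz}: M\supset J\times S^1\times S^1 \to S^1\times S^1$ in the chart where $\alpha = f(x)\d\theta + g(x)\d z$, and observe that the Reeb vector field $R_\alpha$ is tangent to each torus $T_x$ and directs a linear foliation there. Since $u$ has finite energy and is asymptotic to $\gamma_\pm$, the asymptotic orbits lie in such tori; and since the $T_x$ are incompressible, the free homotopy class of the asymptotic orbits forces the restriction $u|_{u^{-1}(\mathbb R\times T_{x_0})}$ to consist of loops in a fixed, nonzero homology class on $T_{x_0}$. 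The key point is that the generic hypothesis of the preceding lemma applies: by Theorem~\ref{theoreme_zeros_isoles} and Proposition~\ref{proposition_tau_im_du}, the set of points where $\d f$ fails to be injective, or where $\im(\d f)$ contains $R_\alpha$, is discrete in $\mathbb R\times S^1$.

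Next I would argue that for an open interval $I\subset J$ of values $x_0$, the slice $u^{-1}(\mathbb R\times T_{x_0})$ is cut out cleanly. The function $x\circ f : \mathbb R\times S^1\to J$ is smooth; by Sard's theorem its regular values form a full-measure set, and for a regular value $x_0$ the preimage $(x\circ f)^{-1}(x_0) = u^{-1}(\mathbb R\times T_{x_0})$ is a properly embedded $1$-submanifold of $\mathbb R\times S^1$, hence a disjoint union of circles and lines. One then rules out the non-compact (line) components and the contractible circle components using positivity of intersection: a component of $u^{-1}(\mathbb R\times T_{x_0})$ bounding a disk in $\mathbb R\times S^1$, or running off to infinity, would make the local intersection of $u$ with the holomorphic ``Reeb sheet'' $v(s,t) = (s+\text{const},\gamma(t))$ (as in the proof of the previous lemma) have the wrong sign or cancel, contradicting that every transverse intersection contributes $+1$. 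Combined with the homological constraint from incompressibility, every remaining circle component must be homotopic to $\{*\}\times S^1$.

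Finally, I would promote this from ``regular value'' to ``open interval''. The regular values of $x\circ f$ in $I$ form an open set; near a regular value the submanifold $u^{-1}(\mathbb R\times T_{x_0})$ varies smoothly by the implicit function theorem, so the topological type (disjoint union of circles, each homotopic to $\{*\}\times S^1$) is locally constant. Hence the full interval $I$ works, after possibly shrinking it so that additionally $\d f$ is injective and transverse to $R_\alpha$ along these slices, which is possible precisely because the bad set is discrete.

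\textbf{Main obstacle.} The delicate step is ruling out contractible or non-compact components of the slice $u^{-1}(\mathbb R\times T_{x_0})$: one must convert the global sign information from positivity of intersection into a statement that no such component can occur, which requires carefully choosing the auxiliary holomorphic curve $v$ foliating $\mathbb R\times T_{x_0}$ by Reeb arcs and keeping track of orientations and of the behavior at the punctures, where $u$ is only asymptotically close to a trivial cylinder. Controlling the ends — ensuring that the slice is compact, i.e.\ stays away from the punctures for $x_0$ in the chosen interval — is where Hofer's exponential convergence estimates near the asymptotic orbits enter, and is the technical heart of the argument.
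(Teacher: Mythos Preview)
Your overall plan is in the right neighborhood, but two of your three steps diverge from the paper in ways that matter, and one of them is a genuine gap.

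\textbf{Compactness is easier than you suggest.} You identify ruling out non-compact components as ``the technical heart'' and invoke Hofer's exponential convergence. The paper does something much more elementary: since $u$ is asymptotic to $\gamma_\pm$, simply shrink the interval to some $I'$ whose tori $T_x$ miss both $\gamma_+$ and $\gamma_-$; then the preimage over $I'$ lies in a compact subset of $\mathbb R\times S^1$ automatically. No asymptotic estimates are needed.

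\textbf{Sard is unnecessary, and your logical order is wrong.} Once the preimage over $I'$ is compact, the bad points (where $\d u=0$ or $\frac{\partial}{\partial\tau}\in\im\d u$) are finite in number there, so their images in $I'$ are finitely many points; shrink again to avoid them. Then for every $x_0$ in the resulting interval, $\im(\d u)$ is transverse to $\mathbb R\times T_{x_0}$ directly, because $\Vect(\frac{\partial}{\partial\tau},R)$ is complementary to $\im(\d u)$ and $R$ is tangent to $T_{x_0}$. This gives the open interval in one step, without Sard and without the implicit-function-theorem patch.

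\textbf{The real gap: contractible components.} Your argument here is the weak point. You write that a contractible circle would make the intersection with an auxiliary Reeb cylinder $v$ ``have the wrong sign or cancel'', but positivity of intersection concerns isolated intersection \emph{points} between two $J$-holomorphic surfaces, whereas $u^{-1}(\mathbb R\times T_{x_0})$ is a $1$-manifold; it is not clear what cancellation you have in mind or how it excludes a small disk. The paper's argument is different and purely topological: if a component $C$ is contractible in $\mathbb R\times S^1$, then $u(C)$ is contractible in $\mathbb R\times M$, hence (by incompressibility of $T_{x_0}$) contractible in $\mathbb R\times T_{x_0}$. But on the chosen interval the projection of $u(C)$ to $T_{x_0}$ is a smooth curve \emph{transverse} to the Reeb vector field; a closed curve transverse to a linear foliation of the torus cannot be null-homotopic. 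That is the contradiction. Positivity of intersection is not used at this step at all --- it enters only in the next lemma, to determine on which side of $T_{x_0}$ the cylinder continues.
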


\begin{proof}
There exists a nonempty open interval $I_1\subset I$ such that $I_1 \times S^1\times S^1$ does not intersect $\gamma_+$ and $\gamma_-$. Thus
$u^{-1}\left(u(\mathbb R\times S^1)\cap I \times S^1\times S^1\right)$ is contained in a compact subset of $\mathbb R\times S^1$.
As the points such that $\d u(s,t)=0$ or $\frac{\partial}{\partial \tau}\in \im(\d u(s,t))$ are isolated in $\mathbb R\times S^1$, we may assume that $I_1 \times S^1\times S^1$ does not contain images of points such that $\d u(s,t)=0$ or $\frac{\partial}{\partial \tau}\in \im(\d u(s,t))$.

Consider $x_0\in I_1$ and $(s,t)\in\mathbb R\times S^1$ such that $u(s,t)\in\mathbb R\times T_{x_0}$. As $\frac{\partial}{\partial \tau}\notin \im(\d u(s,t))$ and $u$ is pseudo-holomorphic, $R_\alpha(s,t)\notin \im(\d u(s,t))$ and 
\[\Vect\left(\frac{\partial}{\partial \tau},R_\alpha(u(s,t))\right)\cap\im(\d u(s,t))=\{0\}.\] 
As $\d u(s,t)\neq 0$, it holds that
\[\Vect\left(\frac{\partial}{\partial \tau},R_\alpha(u(s,t))\right)\oplus\im(\d u(s,t))=T_{u(s,t)}(\mathbb R\times M).\] 
Thus $\im(\d u)+T(\mathbb R\times T_{x_0})=T_{u}(\mathbb R\times M)$ and, by transversality, \[u^{-1}\left(u(\mathbb R\times S^1)\cap(\mathbb R\times T_{x_0})\right)\] is a $1$-dimensional compact submanifold of $\mathbb R\times S^1$.

By contradiction, if $u^{-1}\left(u(\mathbb R\times S^1)\cap(\mathbb R\times T_{x_0})\right)$ has a contractible component $C$, then $u(C)=c$ is contractible in $\mathbb R\times M$. As $c\subset\mathbb R\times T_{x_0}$ and $T_{x_0}$ is an incompressible torus, $c$ is contractible in $ \mathbb R\times T_{x_0}$. As $\Vect\left(\frac{\partial}{\partial \tau},R_\alpha(u(s,t))\right)\cap\im(\d u(s,t))=\{0\}$, the projection of $c$ to $M$ is smooth and transverse to $R_\alpha$. Yet the torus $T_{x_0}$ is foliated by Reeb orbits. Thus $u^{-1}\left(u(\mathbb R\times S^1)\cap(\mathbb R\times T_{x_0})\right)$ has only non-contractible components and, as it is a smooth manifold, these components are homotopic to $\{*\}\times S^1$.
\end{proof}
Let $C$ be a circle given by Lemma \ref{lemme_u_-1}, then $C$ inherits the orientation of $\{*\}\times S^1$ and induces a homotopy class of $T_{x_0}$. Let $p$ be a vector tangent to $T_{x_0}$ so that the straight line in $T_{x_0}$ directed by $p$ is in the homotopy class $[C]$. If $A$ is a collar neighborhood of $C$, denote by $A_\pm$ the two connected components of $A\setminus C$ corresponding to the connected component of $\bb R\times S^1\setminus C$ asymptotic to $\{\pm\infty\}\times S^1$.

\begin{lemma}\label{lemme_cylindres_feuilletes}
If $(p,R_\alpha)$ is an oriented basis of $T_{x_0}$ and $A$ is small enough, then there exist $x_-$ and $x_+$ in $I_1$ such that \[u_M\left(A_-\right)\subset(x_-,x_0)\times S^1\times S^1 \text{ and } u_M\left(A_+\right)\subset(x_0,x_+)\times S^1\times S^1.\]
Otherwise \[u_M\left(A_-\right)\subset(x_0,x_+)\times S^1\times S^1 \text{ and } u_M\left(A_+\right)\subset(x_-,x_0)\times S^1\times S^1.\]
\end{lemma}

In other words, holomorphic cylinders cross a torus foliated by periodic Reeb orbits in just one direction.

\begin{proof}
Let $C(t)$ be a parametrization of $C$ and $c$ be the projection of $u(C)$ on $T_{x_0}$, $c$ is a smooth curve transverse to $R_\alpha$. If $\big(c'(t_0),R_\alpha(c(t_0))\big)$ is an oriented basis of $T_{x_0}$ for some $t_0$, then $\big(c'(t),R_\alpha(c(t))\big)$ is an oriented basis for all $t$. Thus $(p,R_\alpha)$ is an oriented basis $T_{x_0}$ if and only if $\big(c'(t),R_\alpha(c(t))\big)$ is an oriented basis.

The sets $u_M(A_\pm)$ are connected and therefore contained in $(x_0,x_+)\times S^1\times S^1$ or in $(x_-,x_0)\times S^1\times S^1$. Let $V$ be a normal vector to $C$ at $C(t)$  so that $(V,C'(t))$ is an oriented basis ($V$ points toward $A_+$). Consider $v=\d u_M(C(t))\cdot V$, then
$(v,c'(t),R_\alpha)$ is an oriented basis by positivity of intersection. If $(p,R_\alpha)$ is an oriented basis then the $x$ component of $v$ is positive. Conversely, if $(R_\alpha,p)$ is an oriented basis, the $x$ component of $v$ is negative.
\end{proof}

\section{Contact homology of walled contact structures}\label{section_polynomial}

In this section we prove Theorem \ref{theoreme_cloisonne}. 
The strategy of the proof is to decompose the manifold and its walled contact structure into understandable pieces and to study the contact homology of the pieces. Theorem~\ref{theoreme_cloisonne} states that the cylindrical contact homology of a walled contact structure is the sum of the cylindrical contact homologies of the components of this decomposition.

Let $\pi :M\to S$ be a circle bundle over a closed oriented surface and $\xi$ be a contact structure on $M$ walled by a curve $\Gamma$ that contains no contractible components. To prove Theorem \ref{theoreme_cloisonne} we first construct in Section \ref{section_almost_MB} a contact form ``almost Morse-Bott'' such that
\begin{enumerate}
  \item in a neighborhood of the wall that is a union of thickened tori (one for each connected component of the wall), Reeb orbits foliate all the tori;
  \item elsewhere, the Reeb vector field is tangent to the fibers.
\end{enumerate}
This contact form is not of Morse-Bott type as some spaces $N_T$ have a nonempty boundary. We study the associated periodic Reeb orbits in Section \ref{section_Rpo}.  Then, in Section \ref{section_aMB_perturbation}, we perturb the contact form from Section \ref{section_almost_MB} as in the More-Bott case and control periodic Reeb orbits. We prove the quadratic growth rate of contact homology. In Section \ref{section_control_cylinder}, we prove that there are no holomorphic cylinders between two components of the decomposition using positivity of intersection and end the proof using Morse-Bott theory.
\begin{remark}
Contact structures $\ker(\sin(nx)\d y+\cos(nx)\d z)$ on $T^3=T^2\times S^1$ with coordinates $(x,y,z)$ are walled by the curves $\{(\frac{\pi}{2n}+\frac{k\pi}{n},y),y\in S^1\}$. Theorem~\ref{theoreme_cloisonne} and Corollary~\ref{proposition_homologie_T_3} give the same contact homology.
\end{remark}

\subsection{``Almost Morse-Bott'' contact form}\label{section_almost_MB}

The following proposition results from Giroux's work \cite{Giroux01}.

\begin{proposition}\label{formeparticuliere}
Let $\xi$ be a contact structure on a circle bundle $M$ walled by a nonempty multi-curve $\Gamma$ that contains no contractible component. Then there exist a contact structure $\xi'$ isotopic to $\xi$, a defining contact form $\alpha$ for $\xi'$, and a neighborhood $U$ of the wall with local coordinates $(x,y,z)$ in $(-1,1)\times\cup_{i=1}^n S^1\times S^1$ such that:
\begin{enumerate}
 \item $\pi^{-1}(\Gamma)\simeq\{0\}\times \cup_{i=1}^n S^1 \times S^1$,
 \item $\xi'$ is walled by $\Gamma$,
 \item on a trivialization $S'\times S^1$ of $M\setminus U$, we have $\alpha=\beta+e\d z$ where $\beta$ is a $1$-form on $S'$ and $e=1$ when $\xi$ is positively transverse to the fiber and $e=-1$ when $\xi$ is negatively transverse to the fiber,
 \item $\alpha=f(x)\d y+g(x)\d z$ on $U$ where $f :[-1,1]\to\mathbb R$ is negative and strictly convex, and $g :[-1,1]\to\mathbb R$ has an inflection point at $0$, $g=-1$ on $[-1,-\frac{1}{2}]$,  $g=1$ on $[\frac{1}{2},1]$ and $g$ is increasing in between,
 \item the change of coordinates between $U$ and a neighborhood of $M\setminus U$ is a linear map $(x,y,z)\mapsto (x,y,z+ky)$. 
 \end{enumerate}
\end{proposition}

\begin{figure}[here]
\begin{equation*}
\begin{array}{cccc}
\includegraphics{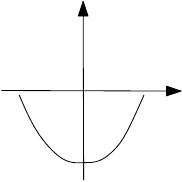} & \includegraphics{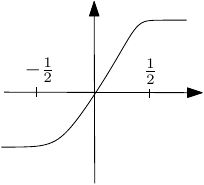}&\includegraphics{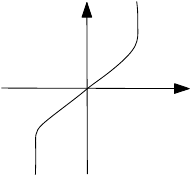}\\
 f &g& \frac{f'}{g'}
\end{array}
\end{equation*}
\caption{Maps $f$ and $g$}
\end{figure}

\begin{remark}\label{remark_R}
On $M\setminus U$, the Reeb vector field is $\pm\frac{\partial}{\partial z}$. On $U$,
\[R_\alpha=\frac{1}{f'(x)g(x)-g'(x)f(x)}\left(\begin{array}{l}
0\\-g'(x)\\f'(x)
\end{array}\right).\] The open set $U$ is a union of thickened tori foliated by Reeb orbits. 
\end{remark}

\begin{proof}
Let $(-1,1)\times \cup_{i=1}^n S^1  \times S^1$ be a chart of a neighborhood $U$ of $\pi^{-1}(\Gamma)\simeq\{0\}\times \cup_{i=1}^n S^1 \times S^1$ with coordinates $(x,y,z)$ so that $\frac{\partial}{\partial x}\in \xi$ and $S^1$ is the fiber. In this chart, any contact form is written
\[\alpha=f(x,y,z)\d y+g(x,y,z)\d z\] where $g(0,y,z)=0$ and $g(x,y,z)\neq0$ for all $x\neq0$. Orient $\Gamma$ so that $\Gamma$ is negatively transverse to $\xi$. Without loss of generality, one can assume $f(0,y,z)=-1$. 

Consider the path of contact forms
\[\alpha_s=\Big(sf(x,y,z)+(1-s)f(x,0,0)\Big)\d y+\Big(sg(x,y,z)+(1-s)g(x,0,0)\Big)\d z\] in a small neighborhood of $\{0\}\times\cup_{i=1}^n S^1  \times S^1$. For all $s\in[0,1]$, $\alpha_s$ is a contact form as $f(0,y,z)=f(0,0,0)$ and $g(0,y,z)=g(0,0,0)$. Using Moser's trick, we can find a vector field $X_s$ near $\{0\}\times \cup_{i=1}^n S^1  \times S^1$ such that
\begin{enumerate}
  \item $X_s(0,y,z)=0$,
  \item  $X_s$ is collinear to $\frac{\partial}{\partial x}$,
  \item $\ker(\phi_s^*\alpha)=\ker(\alpha_s)$ where $\phi_s$ is the flow of $X_s$.
\end{enumerate}
Extend $X_s$ to $M\times[0,1]$ using a cut-off function. Then $\phi_s$ is well defined for all $s\in[0,1]$. The contact structure associated to $\phi_s^*\alpha$ is transverse to the fibers on  $M\setminus\pi^{-1}(\Gamma)$ as $\phi_s^*\alpha=(f\circ\phi_s)\d y+(g\circ\phi_s)\d z$, $(\phi_s)_{|\pi^{-1}(\Gamma)}=\Id$ and $\alpha=\alpha_s$ on $M\setminus U$. Therefore $\xi$ is isotopic to a contact structure with a defining contact form $\alpha$ and a chart
$U'=(-1,1)\times \cup_{i=1}^n S^1  \times S^1$ near $\pi^{-1}(\Gamma)$ where $\alpha=f(x)\d y+g(x)\d z$, $g(0)=0$ and $g(x)\neq 0$ for all $x\neq 0$. By the contact condition, it holds that $g'(0)>0$ and one can assume that $g=-1$ on $[-1,\frac{1}{2}]$ and $g=1$ on $[\frac{1}{2},1]$.

For each connected component of $\Gamma$, choose $f_0$ and $g_0$ so that
\begin{enumerate}
  \item $f_0$ is negative and strictly convex,
  \item $g_0 :[-1,1]\to\mathbb R$ has an inflection point at $0$, $g_0=-1$ on $[-1,-\frac{1}{2}]$,  $g_0=1$ on $[\frac{1}{2},1]$ and $g_0$ is increasing in between,
  \item $f_0=f$ near $x=\pm1$,
  \item $f_0(x)$ (resp. $g_0(x)$) is the same for all connected components and for all $x\in\left[-\frac{3}{4},\frac{3}{4}\right]$.
\end{enumerate}
Write
$f(x)+ig(x)=\rho(x)\exp(i\theta(x))$ and $f_0(x)+ig_0(x)=\rho_0(x)\exp(i\theta_0(x))$. By the contact condition, $\theta$ and $\theta_0$ are decreasing and have the same image as $g(x)=0$ (resp. $g_0(x)=0$) if and only if $x=0$.
By Gray's stability theorem on the path
\[\Big((1-s)\rho(x)+s\rho_0(x)\Big)\exp\Big(i((1-s)\theta(x)+s\theta_0(x))\Big)\]
we obtain an isotopic contact form such that
$\alpha=f_0(x)\d y+g_0(x)\d z$
on $U'$.
Consider $U_i\subset U$ the neighborhood of the component $\Gamma_i$ with coordinates $(-\frac{1}{2},\frac{1}{2})\times S^1\times S^1$.
Let $V$ be a neighborhood of a connected component of
$M\setminus \cup_{i=1}^n U_i $. As $\Gamma\neq \emptyset$ and $S$ is connected, $V$ is a manifold with boundary and the circle bundle is trivial. Let $S'\times S^1$ be a trivialization such that the change of coordinates between $V$ and $(-1,1)\times \cup_{i=1}^n S^1\times S^1$ is linear (i.e. $(x,y,z)\mapsto (x,y,z+ky)$) in polar coordinates near the boundary. Therefore $\alpha=\beta+e\d z$ near $\partial V$. On $V$, $\alpha=\beta_z+h\d z$ and $h\neq 0$, so one can assume $h=e$. By use of Gray's theorem on the path
$\alpha_s=s\beta_z(x)+(1-s)\beta_0(x)+e\d z$ we obtain the desired contact form.
\end{proof}

We are already in position to prove the third part of Theorem~\ref{theoreme_cloisonne}.

\begin{proof}[Proof of Theorem \ref{theoreme_cloisonne}, part (\ref{C3})]
Let $\eta$ be a loop that neither is in the homotopy class of a multiple of the fiber nor projects to a multiple of a curve $\Gamma_i$. Then, the Reeb vector field associated to the contact form $\alpha$ given in Proposition \ref{formeparticuliere} does no have periodic orbit in the homotopy class $[\eta]$. Therefore, there exists a contact form~$\alpha$ such that $HC_*^{[\eta]}(M,\alpha)=0$.
\end{proof}

\subsection{$R_\alpha$ periodic orbits}\label{section_Rpo}
We first investigate the growth rate of Morse-Bott tori.
Let $\alpha$ be a contact form given in Proposition \ref{formeparticuliere}.
Consider $U_i\subset U$ the neighborhood of the component $\Gamma_i$ with coordinates $(-\frac{1}{2},\frac{1}{2})\times S^1\times S^1$. Let $W=M\setminus \cup_{i=1}^n U_i$ (see Figure \ref{figure_U}). 
\begin{figure}[here]
\includegraphics{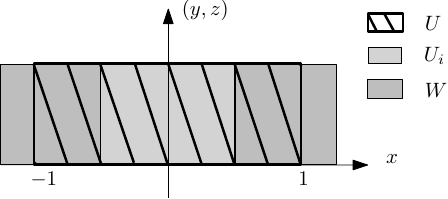}
\caption{Sets $U$, $U_i$ and $W$}\label{figure_U}
\end{figure}
On $W$, all the fibers are periodic orbits of period $1$. On $U_i$, $\alpha=f(x)\d y+g(x)\d z$ and the Reeb vector field is given by
\[R_\alpha=\frac{1}{f'g-fg'}\left(\begin{array}{c} 
 0 \\ 
 -g'\\ 
f' 
\end{array}\right).\] 
There are two cases:  $f'(x)\neq 0$ and $g'(x)\neq 0$.
If $f'(x)\neq 0$ and $-\frac{g'(x)}{f'(x)}=\frac{p}{q}$ with $\gcd(p,q)=1$, the period of the periodic Reeb orbits in $T_x=\{x\}\times S^1\times S^1$ is
\[T=\left\arrowvert\frac{(f'g-fg')q}{f'}\right\arrowvert=\left\arrowvert qg+fp\right\arrowvert.\] 
If $g'(x)\neq 0$ and $-\frac{f'(x)}{g'(x)}=\frac{q}{p}$ with  $\gcd(p,q)=1$, the period of the periodic Reeb orbits in $T_x$ is \[T=\left\arrowvert\frac{(f'g-fg')p}{g'}\right\arrowvert=\left\arrowvert qg+fp\right\arrowvert.\]
In what follows we will assume $q\geq 0$.

Recall that we write $\sigma(\alpha)$ the action spectrum, $N_L=\{p\in M, \phi_L(p)=p\}$ and $S_L=N_L/\raisebox{-0.4ex}{$S^1$}$ for all $L\in\sigma(\alpha)$.
\begin{lemma}\label{lemme_croissance_sigma}
The set $\sigma(\alpha)$ is discrete and $\#(\sigma(\alpha)\cap[0,L])$ exhibits exactly quadratic growth with $L$. 
\end{lemma}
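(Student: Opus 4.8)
The plan is to describe $\sigma(\alpha)$ explicitly from the formulas for the periods given just above the statement, and then count lattice points. First I would separate the two sources of periods: orbits lying in the region $W = M\setminus\bigcup U_i$ all have period $1$ (and its multiples are not separate elements of $\sigma(\alpha)$ since we take $\sigma$ to be a set), so they contribute only finitely many elements to any interval $[0,L]$ up to the question of multiplicity; orbits lying in a thickened torus $U_i$ have period of the form $T = |qg(x) + p f(x)|$ where $(p,q)$ is the coprime pair (with $q\geq 0$) determined by the rational slope $-g'(x)/f'(x)$ (or $f'(x)/-g'(x)$) at the torus $T_x$. The key geometric input from Proposition \ref{formeparticuliere} is that $f$ is negative and strictly convex with $f'$ changing sign, and $g$ is increasing with an inflexion point at $0$ and equal to $\pm 1$ near $x=\pm 1$; consequently the map $x\mapsto (f(x),g(x))$ traces a strictly convex arc, and the ``slope'' $-g'/f'$ (equivalently the argument $\theta(x)$ of $f+ig$) is strictly monotone, so for each coprime pair $(p,q)$ with $q\geq 0$ there is at most one torus $T_x$ in $U_i$ realizing that slope.

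Next I would pin down which pairs $(p,q)$ actually occur and what period they give. Since $f+ig = \rho e^{i\theta}$ with $\theta$ strictly decreasing over a fixed range (the range is the same for all components by condition (4) of Proposition \ref{formeparticuliere}), the realized slopes $p/q$ are exactly the rationals in a fixed interval $J_0$ of the real projective line, and for the torus realizing slope $(p,q)$ one computes $T = |qg+pf| = \rho(x)\,|q\sin\theta(x) - p\cos\theta(x)|$ wait — more simply, $T$ is comparable to $\rho(x)\sqrt{p^2+q^2}$ up to a factor bounded above and below by positive constants (depending only on $\theta$ ranging in a compact subset of where $q\sin\theta - p\cos\theta$ is bounded away from $0$), and $\rho(x)$ itself is bounded between two positive constants $\rho_{\min},\rho_{\max}$. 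Hence there are constants $0 < c_1 < c_2$ with $c_1\sqrt{p^2+q^2} \leq T \leq c_2\sqrt{p^2+q^2}$ for every realized $(p,q)$. Therefore $\sigma(\alpha)\cap[0,L]$ is contained in the (finite) set of periods coming from coprime pairs with $\sqrt{p^2+q^2}\leq L/c_1$, and contains all periods from coprime pairs with $p/q\in J_0$, $q\geq 0$, $\sqrt{p^2+q^2}\leq L/c_2$; in particular $\sigma(\alpha)$ is discrete (only finitely many elements below any bound, with no accumulation since distinct $(p,q)$ give periods differing by at least a fixed amount once we also note the period $T$ is a continuous strictly... — if needed, discreteness follows already from finiteness in each $[0,L]$).

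Finally I would do the counting. The number of coprime pairs $(p,q)$ with $q\geq 0$, $p/q$ in a fixed open interval of positive length, and $p^2+q^2\leq R^2$ is asymptotic to a positive constant times $R^2$ (standard: lattice points in an angular sector of a disk, with the density of visible points being $6/\pi^2$). Summing over the finitely many components $\Gamma_i$ multiplies this by a constant, and the finitely many ``fiber'' periods of size $1$ do not affect the order of growth. Combining the two-sided bounds $c_1\sqrt{p^2+q^2}\leq T\leq c_2\sqrt{p^2+q^2}$ with this lattice-point count gives $A_1 L^2 \leq \#(\sigma(\alpha)\cap[0,L]) \leq A_2 L^2$ for constants $0<A_1<A_2$ and all large $L$, i.e. exact quadratic growth in the sense of Definition \ref{definition_croissance_f}. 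The main obstacle I anticipate is the bookkeeping needed to justify the uniform two-sided comparison between $T$ and $\sqrt{p^2+q^2}$ — one must check that the slopes $p/q$ realized stay in a compact subinterval bounded away from the ``bad'' directions (where $qg+pf$ could be small relative to $\sqrt{p^2+q^2}$) and that $\rho(x)$ stays bounded away from $0$ and $\infty$, both of which follow from conditions (3)--(4) of Proposition \ref{formeparticuliere} but require care near $x=\pm 1$ where $f'$ vanishes; once that uniformity is in hand, the counting is routine.
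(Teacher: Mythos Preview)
Your approach is correct and is essentially the paper's own argument: both establish a two-sided comparison $T\asymp\sqrt{p^2+q^2}$ between the period and the coprime slope pair and then count lattice points, using monotonicity of the tangent direction of the convex arc $(f,g)$ to get at most one torus per slope. The paper handles your anticipated uniformity obstacle not via polar coordinates but by splitting $\mathopen]-\tfrac{1}{2},\tfrac{1}{2}\mathclose[$ into two intervals $I_1,I_2$ on which $|g'|$ (resp.\ $|f'|$) is bounded below, and then bounding $|p|$ and $|q|$ directly and separately; this is exactly what makes $\sqrt{(f')^2+(g')^2}$ uniformly positive and hence $T=|pf+qg|=|f'g-fg'|\cdot\sqrt{p^2+q^2}/\sqrt{(f')^2+(g')^2}$ uniformly comparable to $\sqrt{p^2+q^2}$. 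One small slip: it is $g'$ (not $f'$) that vanishes near the endpoints, since $g=\pm 1$ there, while $f'$ vanishes once at the interior minimum of the strictly convex $f$.
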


\begin{proof} 
We first show that it has a quadratic upper bound.
There exist $A>0$ and intervals $I_1$ and $I_2$ such that
\begin{enumerate}
  \item $I_1\cup I_2=(-\frac{1}{2},\frac{1}{2})$,
  \item $\left\vert \frac{1}{g'}\right\vert<A$ on $I_1$ and $\left\vert \frac{1}{f'}\right\vert<A$ on $I_2$,
  \item $\frac{1}{A}<f'g-fg'<A$, $\vert f'\vert<A$ and $\vert g'\vert <A$.
\end{enumerate}
Let $L>0$. There are at most $3A^6 L^2$ rational numbers $\frac{q}{p}$ such that $\left\vert\frac{(f'g-fg')p}{g'(x)}\right\vert<L$ for some $x\in I_1$ with $-\frac{f'(x)}{g'(x)}=\frac{q}{p}$. As $x\mapsto-\frac{f'(x)}{g'(x)}$ is increasing, for all rational numbers $\frac{q}{p}$ there is one $x$ such that $\frac{f'(x)}{-g'(x)}=\frac{q}{p}$. Therefore the growth rate is at most quadratic.

We now show that the growth rate is also at least quadratic: consider $B>0$ and $p$, $q$ such that $p^2+q^2\leq B$. Then, there exists $x$ such that $-\frac{f'(x)}{g'(x)}=\frac{q}{p}$. The associated torus is foliated by periodic Reeb orbits of period smaller than $A^2B$.
\end{proof}

\subsection{Periodic orbits of the perturbed contact form}\label{section_aMB_perturbation}
Let $\alpha$ be a contact form given in Proposition \ref{formeparticuliere}. We use the notation from Section~\ref{section_Rpo}.
Similar to the method we use in the Morse-Bott case, we perturb the degenerate contact form.
Fix $L>0$. For all $L'\leq L$, fix a Morse function $f_{L'}$ on $S_{L'}$ and extend it to $N_{L'}$ so that $\d f_{L'}(R_\alpha)=0$. We assume that for $L'=1$, $f_1$ does not depend on $y$ and $e\frac{\partial f_1}{\partial x}>0$ in the cylindrical coordinates $(x,y,z)$ near $\partial W$.
Extend $f_{L'}$ to $M$ by using cut-off functions in a standard way. Let $\overline{f}_L$ denote the sum of $f_{L'}$ for all $L'\leq L$ and perturb the contact form with $\overline{f}_L$. More precisely, let
\[\alpha_{\lambda,L}=(1+\lambda\overline{f}_L)\alpha.\]
Note that the diameter in the $x$-coordinate of connected components of $\dom(\overline{f}_L)$ that do not contain $W$ tends to $0$ as $L\to\infty$. Additionally, the flow of $R_{\alpha_{L,\lambda}}$ preserves $\dom (\overline{f}_L)$ and $M\setminus\dom (\overline{f}_L)$.

\begin{lemma}\label{perturbation_morse}
For all $L>0$, there exists $\lambda(L)>0$ such that for all $0<\lambda\leq\lambda(L)$, the periodic orbits of $\alpha_{\lambda,L}$ with period smaller than $L$ correspond to critical points of $f_{L'}$ on $S_{L'}$ for $L'\in\sigma(\alpha)\cap[0,L]$. These periodic orbits are non-degenerate.
\end{lemma}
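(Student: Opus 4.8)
The plan is to adapt Bourgeois' Morse--Bott perturbation argument \cite{Bourgeois02} to this ``almost Morse--Bott'' situation, the only novelty being that some spaces $N_{L'}$ have boundary (the portions of $\partial W$). First I would fix $L>0$ and recall that, by Lemma \ref{lemme_croissance_sigma}, the set $\sigma(\alpha)\cap[0,L]$ is finite, so $\overline{f}_L$ is a finite sum of functions with disjoint supports contained in small neighborhoods of the $N_{L'}$; moreover the flow of $R_{\alpha}$ (and of $R_{\alpha_{\lambda,L}}$) preserves $\dom(\overline{f}_L)$ and its complement, so the analysis splits into two regimes.

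In the region $W\setminus\dom(\overline{f}_L)$ and, more generally, away from $\bigcup_{L'\le L}\dom(f_{L'})$, the form is unchanged and the only periodic orbits of period $\le L$ are the fibers of period $1$ in $W$ and the Reeb orbits in the foliated tori $T_x$; these are exactly the periods that we perturb, so there is nothing to check there. The main computation is local near each $N_{L'}$. Here I would compute $R_{\alpha_{\lambda,L}}$ in terms of $R_\alpha$ and $\d\overline f_L$: writing $\alpha_{\lambda,L}=(1+\lambda\overline f_L)\alpha$, one gets a first-order expansion $R_{\alpha_{\lambda,L}}=R_\alpha + \lambda Y_{L'} + O(\lambda^2)$ where $Y_{L'}$ is determined by $\d f_{L'}$ and $\d\alpha$ restricted to $\xi$; since $\d f_{L'}(R_\alpha)=0$ by construction, to first order the flow stays on each $N_{L'}$ and, after passing to the quotient $S_{L'}$, the induced flow is (a reparametrisation of) the negative gradient flow of $f_{L'}$ for the metric $\d\alpha(\cdot,J\cdot)$. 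Hence for $\lambda$ small the only closed orbits of $R_{\alpha_{\lambda,L}}$ of period $\le L$ meeting a neighborhood of $N_{L'}$ project to fixed points of this flow, i.e. critical points of $f_{L'}$ on $S_{L'}$; non-degeneracy follows because the linearised return map is governed by the Hessian of $f_{L'}$ at such a critical point, which is invertible since $f_{L'}$ (equivalently $h$, and the function $f_1$ on $W$) is Morse. Two points need the specific structure here: first, in the tori $T_x$ the function is $f_{L'}(x,y,z)=h(qy-pz)$, constant along the Reeb direction $-g'\partial_y+f'\partial_z$, so it genuinely descends to a Morse function of one variable on $S_{L'}=S^1$; second, on $W$ the condition $\epsilon\,\partial f_1/\partial x>0$ near $\partial W$ in cylindrical coordinates guarantees that the perturbed Reeb vector field points transversally ``inward'' along $\partial W$, so no new orbits are created hugging the boundary and no orbit escapes through $\partial W$ — this is precisely the role of condition (2) in the choice of $f_{L'}$ and is what replaces the closedness of $N_{L'}$ in Bourgeois' setup.

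The last step is the standard compactness/uniformity argument: I would argue by contradiction that if no such $\Lambda$ existed there would be a sequence $\lambda_n\to0$ and closed $R_{\alpha_{\lambda_n,L}}$-orbits $\gamma_n$ of period $\le L$ not of the expected form; by Arzel\`a--Ascoli (periods bounded, hence lengths bounded, and $\alpha_{\lambda_n,L}\to\alpha$ in $C^\infty$) a subsequence converges to a closed $R_\alpha$-orbit of period $\le L$, which lies in some $N_{L'}$; one then localises near $N_{L'}$ and applies the implicit function theorem / normal form above to conclude that for large $n$ the orbit $\gamma_n$ must after all project to a critical point of $f_{L'}$, a contradiction. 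I expect the main obstacle to be the boundary behaviour on $W$: one must check carefully that the perturbation $f_1$, extended by a cut-off, does not spoil the contact condition and that the transversality of $R_{\alpha_{\lambda,L}}$ to $\partial W$ is uniform in $\lambda$ small, so that the ``almost Morse--Bott'' space $N_1$ behaves, for the purpose of counting short orbits, exactly like a closed Morse--Bott manifold; everything else is a direct transcription of \cite[\S 2]{Bourgeois02}.
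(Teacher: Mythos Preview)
Your overall strategy matches the paper's: away from $\partial W$ you invoke Bourgeois' Morse--Bott argument, and you correctly isolate a collar of $\partial W$ as the only place requiring a separate check, with condition~(2) on $f_1$ as the key input. The Arzel\`a--Ascoli step and the torus computation are fine.

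But your boundary mechanism is wrong. The perturbed Reeb vector field is \emph{not} transverse to $\partial W$. Near $\partial W$, in the $W$-trivialization of Proposition~\ref{formeparticuliere}, the perturbed form is $(1+\lambda\overline f_L)\big(f_W(x)\d y+g(x)\d z\big)$ with $\overline f_L$ depending only on $x$ (this is exactly condition~(2) on $f_1$); hence it is of the type $F(x)\d y+G(x)\d z$, and any contact form of this type has Reeb vector field with vanishing $x$-component. So $R_{\alpha_{\lambda,L}}$ is tangent to every torus $\{x=\text{const}\}$, including $\partial W$: there is no ``inward'' direction and no orbit crosses $\partial W$ at all, perturbed or not.

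The paper's argument is instead the following explicit computation. In these coordinates
\[
R_{\alpha_{\lambda,L}}=\frac{1}{(f_W'g-f_Wg')(1+\lambda\overline f_L)^2}
\begin{pmatrix}0\\ -g'(1+\lambda\overline f_L)-\lambda g\,\overline f_L'\\ f_W'(1+\lambda\overline f_L)+\lambda f_W\,\overline f_L'\end{pmatrix}.
\]
Near $\partial W$ one has $g'=0$ and $g=\epsilon$, so the $y$-component reduces to $-\lambda\epsilon\,\partial_x f_1$, which is nonzero precisely because $\epsilon\,\partial_x f_1>0$, and has magnitude of order~$\lambda$. A closed orbit trapped in this collar must therefore make at least one full turn in the $y$-direction, which costs period of order $1/\lambda\gg L$ once $\lambda$ is small. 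Thus the role of condition~(2) is not to push orbits off $\partial W$, but to give the Reeb flow a slow nonvanishing $y$-drift along each torus near $\partial W$ that forbids it from closing up in time~$\le L$. Your gradient-flow heuristic on $S_1$ does not capture this either: the first-order correction to the Reeb field lies in the $\partial_y$ direction, tangent to $\partial S_1$, not normal to it.
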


\begin{proof}
Outside a neighborhood of $\partial W$, Morse-Bott theory applies directly (see \cite[Lemma 2.3]{Bourgeois02}). In a neighborhood of $\partial W$, in the trivializing chart of $W$ with coordinates  $(x,y,z)$, the contact form is written
\[\alpha=(f(x)+kg(x))\d y+g(x)\d z=f_W(x)\d y+g(x)\d z\] 
as the change of coordinates is linear (Proposition \ref{formeparticuliere}).
As $\overline{f}_L$ only depends on $x$, we have
\[R_{\alpha_{L,\lambda}}=\frac{1}{(f_W'g-f_Wg')(1+\lambda \overline{f}_L)^2}
\left(\begin{array}{c}
0\\
 -g'(1+\lambda \overline{f}_L)-\lambda g \overline{f}_L'\\
f_W'(1+\lambda \overline{f}_L)+\lambda f_W \overline{f}_L'
\end{array} \right).\]
In a small neighborhood of $\partial W$ and for $\lambda$ small enough, the $y$ coordinate of the Reeb vector field is as small as desired and does not vanish. Therefore there is no periodic Reeb orbit with period smaller than $L$.
\end{proof}

\begin{lemma}\label{OP_et_points_critiques_cas_cloisonné}\label{noncontractile}
Let $\eta$ be a loop that is a multiple of the fiber or projects to a multiple of a connected component of $\Gamma$. Then, there exist $L_0>0$ and $L\mapsto \lambda(L)$ positive and decreasing such that for all $L\geq L_0$ and $\lambda\leq\lambda(L)$
\begin{enumerate}
  \item the periodic Reeb orbits of $\alpha_{L,\lambda}$ homotopic to $\eta$ have a period smaller than~$L$,
  \item $\alpha_{L,\lambda}$ is hypertight.
\end{enumerate}
In addition, there exist arbitrarily small non-degenerate and hypertight perturbations of $\alpha_{L,\lambda}$.
\end{lemma}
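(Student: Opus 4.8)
The plan is to read the free homotopy classes of all closed Reeb orbits of $\alpha_{L,\lambda}$ off the normal form of Proposition~\ref{formeparticuliere} together with Bourgeois' description of the Morse--Bott perturbation, and then exploit that the relevant fundamental groups are torsion free. First some $\pi_1$--bookkeeping: since $S$ is closed oriented with $\chi(S)\le 0$, $\pi_1(S)$ is torsion free and every essential simple closed curve represents a primitive element, so each $\Gamma_i$ is primitive and non-torsion; moreover the fibre is central (see the footnote to Theorem~\ref{theoreme_cloisonne}) and generates the kernel $\cong\mathbb Z$ of $\pi_1(M)\twoheadrightarrow\pi_1(S)$ (here $\pi_2(S)=0$), so $[\t{fiber}]$ has infinite order. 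Next, the classification of closed orbits: the flow of $R_{\alpha_{L,\lambda}}$ preserves $\dom\overline f_L$ and its complement, preserves every torus $\{x=c\}\times\Gamma_i\times S^1$ of $U$, and carries the undeformed field $\pm\partial_z$ on $W\setminus\dom\overline f_L$ and on the flat parts of $U$; by Bourgeois' computation the perturbation replaces $R_\alpha$, near $N_{L'}$, by a reparametrization of $R_\alpha$ plus a drift that is gradient--like for $-f_{L'}$ on $S_{L'}$, and near $\partial W$ has non-vanishing $y$--component (the condition $\epsilon\,\partial f_1/\partial x>0$, cf.\ the proof of Lemma~\ref{perturbation_morse}). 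A monotone drift cannot close up, so every closed orbit of $\alpha_{L,\lambda}$ is either a (possibly multiply covered) orbit sitting at a critical point of some $f_{L'}$, $L'\in\sigma(\alpha)\cap[0,L]$ --- a cover of a fibre over a critical point of $f_1$, or a cover of the simple $(p_0,q_0)$--orbit of a rational torus $T_{x_0}\subset U_i$ over a critical point of $h$ on $T_{x_0}/S^1$ --- or an orbit contained in some torus $T_x\subset U_i$ with $x\notin\dom\overline f_L$, which forces its period $>L$. In every case its class is $[\t{fiber}]^m$ or $[\Gamma_i]^{mp_0}[\t{fiber}]^{mq_0}$ with $m\ge1$, where $(p_0,q_0)$ is the primitive slope $-g'/f'$ at the torus, nonzero away from the flat parts.

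For hypertightness (item~2) apply the classification to the trivial class: in $\pi_1(S)$ a class $[\t{fiber}]^m$ with $m\neq0$ is nontrivial since $[\t{fiber}]$ has infinite order, and $[\Gamma_i]^{mp_0}[\t{fiber}]^{mq_0}$ maps to $[\Gamma_i]^{mp_0}\neq1$ when $mp_0\neq0$ (as $\Gamma_i$ is non-torsion) and equals $[\t{fiber}]^{mq_0}\neq1$ when $p_0=0$; since $(p_0,q_0)\neq(0,0)$, no closed orbit is contractible. For the period bound (item~1), let $[\gamma]=[a]$. If $[a]=[\t{fiber}]^k$, $k\neq0$, matching classes in $\pi_1(S)$ forces $p_0=0$, so $\gamma$ is a $|k|$--fold cover of a fibre and $\mathcal A(\gamma)\le 2|k|$ for $\lambda$ small. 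If $[a]=[\t{fiber}]^k[\Gamma_{j_0}]^{k'}$ with $k'\neq0$, then $\gamma\subset U_i$ with $[\Gamma_i]^{mp_0}=[\Gamma_{j_0}]^{k'}$ in $\pi_1(S)$; primitivity of essential simple closed curves gives $[\Gamma_i]=[\Gamma_{j_0}]^{\pm1}$, $mp_0=\pm k'$, $mq_0=\pm k$, so $m\mid\gcd(k,k')$ and $(p_0,q_0)$ is determined, and by monotonicity of the slope $-g'/f'$ (Lemma~\ref{lemme_croissance_sigma}) each relevant $U_i$ carries finitely many tori of that slope; hence finitely many candidates, with $\mathcal A(\gamma)=m\,|q_0\,g(x_0)+f(x_0)\,p_0|\le |k|+C|k'|$ for a constant $C$ depending only on $f$. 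Thus $\mathcal A(\gamma)\le L_0-1$ with $L_0$ depending only on $[a]$ and the fixed data $f,g$; taking $\lambda(L):=\min_{L_0\le L'\le L}\Lambda(L')$ non-increasing (with $\Lambda$ from Lemma~\ref{perturbation_morse}), for $L\ge L_0$ and $\lambda\le\lambda(L)$ every closed orbit in the class of $a$ has period $<L$.

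For the last assertion, iterate the Morse--Bott construction to all periods: pick Morse functions $f_{L''}$ on $S_{L''}$ for all $L''\in\sigma(\alpha)$ of the prescribed shape, with $\mathcal C^\infty$--norms $\le1$, and set $\alpha'=\bigl(1+\lambda\overline f_L+\sum_{L''>L}\mu_{L''}f_{L''}\bigr)\alpha$ with $\mu_{L''}>0$ decaying fast enough (e.g.\ $\mu_{L''}=\delta\,2^{-\#(\sigma(\alpha)\cap[0,L''])}$) that the series converges in $\mathcal C^\infty$, that $\alpha'$ is $\delta$--close to $\alpha_{L,\lambda}$, and that every tail is $\mathcal C^2$--small enough to preserve non-degeneracy of the finitely many already non-degenerate orbits of each bounded period (non-degeneracy being an open condition). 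Then all remaining Morse--Bott families --- the unperturbed tori and fibre families --- have been split, so $\alpha'$ is non-degenerate, and the previous arguments apply verbatim: every closed orbit of $\alpha'$ sits at a critical point of some $f_{L'}$, hence lies in a class $[\t{fiber}]^m$ or $[\Gamma_i]^{mp_0}[\t{fiber}]^{mq_0}$, none of which is trivial, so $\alpha'$ is hypertight.

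The main obstacle is the ``gradient--like'' input used throughout: that the drift of the perturbed Reeb field transverse to each Morse--Bott family is the negative gradient flow of $f_{L'}$ on $S_{L'}$, which forbids closed orbits off the critical set and thereby confines closed orbits of \emph{every} period --- not only those of period $\le L$ handled in Lemma~\ref{perturbation_morse} --- to the controlled homotopy classes. Establishing this for arbitrarily large periods, and near the non-empty boundary of the spaces $N_{L'}$ produced by the wall $\pi^{-1}(\Gamma)$, is where the special choices in Proposition~\ref{formeparticuliere} and in Section~\ref{section_aMB_perturbation} (notably the sign condition $\epsilon\,\partial f_1/\partial x>0$ near $\partial W$, which keeps orbits from crossing between $W$ and $U$) are needed; in the simple geometries at hand ($S_{L'}$ a circle or a surface with boundary) this is a routine extension of Bourgeois' local analysis \cite{Bourgeois02}.
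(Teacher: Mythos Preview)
Your approach differs from the paper's and contains a genuine gap that you yourself flag but then dismiss too quickly.

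First, a concrete error: the perturbed flow does \emph{not} preserve every torus $\{x=c\}\times\Gamma_i\times S^1$ in $U$. Near a rational torus $T_{x_0}$ with slope $(p_0,q_0)$, the perturbation is $F=\chi(x)h(q_0y-p_0z)$, and a direct computation of $R_{\alpha_{L,\lambda}}$ gives an $x$-component proportional to $\lambda\chi(x)h'(q_0y-p_0z)\bigl(g(x)q_0+f(x)p_0\bigr)$, which is nonzero wherever $h'\neq 0$ since $gq_0+fp_0=\pm L'\neq 0$. So orbits genuinely drift in~$x$.

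More seriously, your central claim --- that \emph{every} closed orbit of $\alpha_{L,\lambda}$, of any period, sits at a critical point of some $f_{L'}$ or on an unperturbed torus --- is exactly what Bourgeois' local analysis does \emph{not} give. Lemma~\ref{perturbation_morse} controls only orbits of period $\le L$; for larger periods the perturbed flow on each thickened torus is a near-integrable system, and there is no reason to exclude further closed orbits arising from resonances between the drift in $(x,s)$ and the motion along $V=p_0\partial_y+q_0\partial_z$. These orbits need not lie in the slope class $(p_0,q_0)$, so even your homotopy bookkeeping is unjustified for them. Calling this ``routine'' begs the question the lemma is meant to answer.

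The paper sidesteps all of this. It never attempts to classify long orbits; instead it fixes an open cover $\{W'_i\}\cup\{U'_j\}$ on which the $z$-component (resp.\ $y$-component) of $R_\alpha$ is bounded below by some $\epsilon>0$, and shrinks $\lambda(L)$ so that the same bound holds for $R_{\alpha_{L,\lambda}}$. Any orbit $\gamma$ of period $>L$ is confined to $\dom(\overline f_L)$ or its complement (this invariance you used correctly), and the choice of $L_0$ forces each such component to lie in a single $W'_i$ or $U'_j$. Then $\gamma$ winds at least $\epsilon L>\max_i|k_i|$ times around the fibre (in $W'_i$) or at least $\epsilon L>\max_j|k'_j|$ times around $\Gamma_j$ (in $U'_j$); either way it cannot represent $[a]$ and cannot be contractible. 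This crude winding argument needs nothing about the perturbed dynamics beyond a $C^0$ bound on one component of the Reeb field. The non-degenerate hypertight perturbation is then obtained by perturbing relative to irrational boundary tori of the $W'_i$ and $U'_j$, so that the winding argument still applies --- again avoiding any global description of closed orbits.
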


\begin{proof}
Let $R_y$ and $R_z$ denote the $y$ and $z$-coordinates of the Reeb vector field.
Let $W_1,\dots, W_m$ be the connected components of $W$. Consider 
\[\bigcup_{i=1}^m W'_i\cup\bigcup_{j=1}^n U'_j\]
an open covering of $M$ such that $W'_i\cap \pi^{-1}(\Gamma)=\emptyset$ for all $i=1\dots m$ and $U'_j\cap W=\emptyset$ for all $j=1\dots n$ (see Figure \ref{figure_U_1}). 
\begin{figure}[here]
\includegraphics{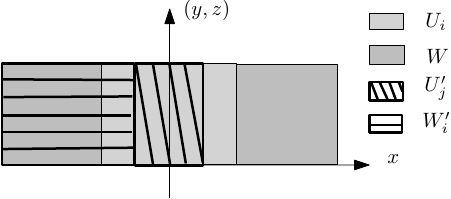}
\caption{Sets $U'_j$ and $W'_i$}\label{figure_U_1}
\end{figure}
There exists $\epsilon>0$ such that, in the trivialization of $W'_i$ induced by Proposition \ref{formeparticuliere}, $\vert R_z\vert>\epsilon$ and, in the trivialization of $U'_j$ induced by Proposition \ref{formeparticuliere}, $\vert R_y\vert>\epsilon$. If there exists a loop $\eta'$ in $W_i$ (resp. $U_j$) such that $[\eta']=[\eta]$, let $k_i$ (resp. $k'_j$) denote the multiplicity of the fiber (resp. $\Gamma_j$) in the decomposition of $\eta'$ in the associated trivialization. Else, set $k_i=1$ (resp. $k'_j=1$).
Consider $L_0>0$ such that
\begin{enumerate}
 \item\label{cond'_1} $L_0>\displaystyle{\max_{1\leq i\leq m, 1\leq j\leq n}}(\{|k_i|,|k_j'|\})\cdot\frac{1}{\epsilon}$,
 \item\label{cond'_2} periodic orbits of $R_\alpha$ homotopic to $\eta$ have period smaller than $L_0$,
 \item\label{cond'_3} for all $L''\geq L_0$ , the connected components of $\dom(f_{L''})$ are contained either in a $U'_j$ or in a $W'_i$.
\end{enumerate}
Let $L\geq L_0$.
By Lemma \ref{perturbation_morse}, there exists $\lambda(L)$ such that for all $\lambda\leq\lambda(L)$, all periodic Reeb orbits of $\alpha_{L,\lambda}$ with period smaller than $L$ are non-degenerate and such that $\epsilon$ is a lower bound for the $z$-component of $R_{\alpha_{L,\lambda}}$ in $W'_i$ and for the $y$-component of $R_{\alpha_{L,\lambda}}$ in $U'_j$.

Let $\gamma$ be a $\alpha_{L,\lambda}$-periodic Reeb orbit with period greater than $L$. Then, either $\gamma\subset\dom(f_L)$ or $\gamma\subset M\setminus\dom(f_L)$. If $\gamma\subset\big(M\setminus\dom(f_L)\big)$ then $\gamma$ is not homotopic to $\eta$ by condition~\eqref{cond'_2}. If $\gamma\subset\dom(f_L)$, by condition~\eqref{cond'_3}, either $\gamma\subset \big( \dom(f_L)\cap W'_i\big)$ or $\gamma\subset ( \dom(f_L)\cap U'_j\big)$. If $\gamma\subset \big( \dom(f_L)\cap W'_i\big)$, then $\gamma$ covers the fiber at least $\pm\epsilon L$ times and hence $\gamma$ covers the fiber at least $|k_i|+1$ or  $-|k_i|-1$ times by condition~\eqref{cond'_1}. If $\gamma\subset ( \dom(f_L)\cap U'_j\big)$ then it covers $\Gamma_j$ at least $|k'_j|+1$ or  $-|k'_j|-1$ times. Consequently, $\gamma$ is not homotopic to $\eta$ and is non-contractible as $\Gamma_j$ is not contractible and the fiber is not a torsion element. By Lemma \ref{perturbation_morse}, $\alpha_{L,\lambda}$ is hypertight.

We now focus on the existence of non-degenerate, hypertight perturbations of $\alpha_{L,\lambda}$. We may assume that the boundaries of $U'_j$ and $W'_i$ are tori $x=\text{cst}$ with dense Reeb orbits. Choose a small non-degenerate perturbation $\alpha'$ of $\alpha_{L,\lambda}$ that preserves the 
boundaries of $U'_j$ and $W'_i$ and such that $\epsilon$ is a lower bound for the $z$-component of $R_{\alpha'}$ in $W'_i$ and for the $y$-component of $R_{\alpha'}$ in $U'_j$. If $\gamma$ is a periodic Reeb orbit, then $\gamma$ is contained either in a $U'_j$ or in a $W'_i$. As in the previous
paragraph, $\gamma$ is non-contractible.
\end{proof}

\begin{lemma}
Under Hypothesis H, the growth rate of contact homology is (at most) quadratic.
\end{lemma}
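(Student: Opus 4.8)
Since $\alpha$ is hypertight, Proposition~\ref{proposition_trivial_augmentation} identifies linearized contact homology for the trivial augmentation with cylindrical contact homology, so it suffices to bound the growth rate of the latter. By Remark~\ref{remark_growth}, for any hypertight non-degenerate contact form $\beta$ the rank of the map $\phi_L$ of the associated filtered directed system is at most $\dim C^{\t{cyl}}_{\le L}(M,\beta)$, and by Corollary~\ref{proposition_invariance_croissance_cylindrique} this growth rate does not depend on $\beta$. Hence it is enough to produce one hypertight non-degenerate contact form $\beta$, homotopic to $\alpha$, whose number of good Reeb periodic orbits of period at most $L$ is at most $aL^2$ for a fixed constant $a$ and all $L>0$.

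For $\beta$ I would use a \emph{global} Morse--Bott perturbation of the almost Morse--Bott form $\alpha$ of Proposition~\ref{formeparticuliere}: set $\beta=(1+\lambda\overline f_\infty)\alpha$ with $\overline f_\infty=\sum_{L'\in\sigma(\alpha)}f_{L'}$, the functions $f_{L'}$ chosen as in Section~\ref{section_aMB_perturbation} but with supports shrinking onto $N_{L'}$ and $C^1$-norms decaying fast enough, as $L'\to\infty$, that $\overline f_\infty$ is smooth and one fixed small $\lambda>0$ is admissible at every action level. Arguing as in the proofs of Lemmas~\ref{perturbation_morse} and~\ref{noncontractile}: the Reeb periodic orbits of $\beta$ correspond to critical points of the Morse functions $f_{L'}$ on the spaces $S_{L'}$; no orbit is created near $\partial W$, where the sign condition on $\frac{\partial f_1}{\partial x}$ keeps the $y$-component of the Reeb vector field away from zero; and $\beta$ is hypertight because an orbit contained in $\dom(\overline f_\infty)\cap W'_i$ (resp.\ in $\dom(\overline f_\infty)\cap U'_j$) must wind many times around the fiber (resp.\ around $\Gamma_j$), hence cannot be contractible. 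If the tori $x=\mathrm{cst}$ bounding the relevant charts carry dense Reeb orbits, a further small non-degenerate hypertight perturbation preserving these tori is applied, exactly as in Lemma~\ref{noncontractile}.

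It then remains to count. Apart from $O(L)$ orbits carried by the fibers over $W$ (a fixed Morse function on the base of $W$ contributes boundedly many simple orbits of period about $1$, whose $k$-th multiples have period about $k$), every good Reeb orbit of $\beta$ of period at most $L$ is determined by a torus $T_x$ foliated by Reeb orbits of primitive period $T_0(x)$, one of the boundedly many critical points of the Morse function $h$ on $T_x/S^1\cong S^1$, and a multiplicity $k$ with $kT_0(x)\le L$. In the coordinates of Proposition~\ref{formeparticuliere} the slope function is monotone on each of the two subintervals of Lemma~\ref{lemme_croissance_sigma}, so (up to boundedly many tori per pair) such a $T_x$ is indexed by a coprime pair $(p,q)$ with $T_0(x)=|qg(x)+pf(x)|$, a quantity comparable to $\|(p,q)\|$ and bounded below by a positive constant. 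The number of such orbits is therefore at most a constant times the sum, over coprime pairs $(p,q)$ with $\|(p,q)\|\le CL$ for a suitable $C$, of the terms $L/\|(p,q)\|$, which is $O(L^2)$ by the standard lattice-point estimate (this is a quantitative form of Lemma~\ref{lemme_croissance_sigma}). Hence $\dim C^{\t{cyl}}_{\le L}(M,\beta)\le aL^2$ for all $L$, and by the reduction of the first paragraph the growth rate of contact homology is polynomial of order at most two.

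I expect the main obstacle to be the construction of the global perturbation $\beta$: one must check that the shrinking supports and decaying norms of the $f_{L'}$ can be chosen so that a \emph{single} $\lambda$ is admissible at all action levels, that $\beta$ is genuinely non-degenerate and hypertight despite the ``almost'' Morse--Bott defect near $\partial W$, and that the mutual overlaps of the supports of the $f_{L'}$ create no spurious short Reeb orbits. Granting this, the counting step is routine, amounting to the lattice-point refinement of Lemma~\ref{lemme_croissance_sigma} indicated above.
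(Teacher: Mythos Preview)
Your reduction in the first paragraph is fine, and your counting in the third is essentially the quantitative form of Lemma~\ref{lemme_croissance_sigma}. The genuine issue is the one you flag yourself: the global perturbation $\beta=(1+\lambda\overline f_\infty)\alpha$ is very hard to build, and may not exist with the properties you need. In the chart $U_i$ the slope map $x\mapsto -g'(x)/f'(x)$ is monotone and takes every rational value, so the tori $T_x$ carrying periodic Reeb orbits are \emph{dense} in $x$. Hence the supports of the $f_{L'}$ cannot be made pairwise disjoint; on a given torus $T_{x_0}$ of large primitive period your perturbation is a sum of contributions $h_k(q_ky-p_kz)$ coming from nearby tori of much smaller period, and the Morse--Bott identification of periodic orbits with critical points (Lemma~\ref{perturbation_morse}, which quotes \cite[Lemma~2.3]{Bourgeois02}) no longer applies in any obvious way. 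More seriously, that identification is only stated for orbits of period at most $L$ and for $\lambda\le\Lambda(L)$, with $\Lambda(L)\to 0$; you would need a statement valid at all action levels for a single form, and this is not available.

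The paper sidesteps the whole issue by \emph{not} building a single form. It fixes one non-degenerate hypertight $\alpha'$ and, for a sequence $L_i\to\infty$, uses the non-degenerate hypertight perturbations $\alpha'_{L_i,\lambda_i}$ produced by Lemma~\ref{noncontractile}. These are only controlled up to action $L_i$, but that is enough: the crucial observation is that the conformal factor between $\alpha'$ and each $\alpha'_{L_i,\lambda_i}$ is bounded by a constant $D$ \emph{independent of $i$} (since $\lambda_i\to 0$ and the further perturbation is small). Invariance of the filtered system (Corollary~\ref{proposition_invariance_croissance_cylindrique}, together with the uniform estimate from \cite[\S 10]{ColinHonda08}) then gives a constant $C(D)$ with $\rank(\psi_L)\le\rank(\psi^i_{C(D)L})$ for all $i$; choosing $i$ with $L_i\ge C(D)L$ bounds the right-hand side by $\#C^{\t{cyl}}_{\le C(D)L}(M,\alpha'_{L_i,\lambda_i})\le C\cdot\#(\sigma(\alpha)\cap[0,C(D)L+1])$, which is quadratic. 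So the diagonal trick---compare to a \emph{moving} reference form with uniformly bounded conformal factor---is what replaces your global perturbation, and it is both easier and rigorous.
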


\begin{proof}
Let $\alpha'$ be a non-degenerate and hypertight contact form (given for instance by Lemma \ref{noncontractile}). Let $\alpha_{L_i,\lambda_i}$ be a sequence of contact forms with $L_i\to\infty$ such that $L_i\notin\sigma(\alpha)$ and $\lambda_i\leq\lambda(L_i)$ for all $i\in\bb N^*$. Perturb $\alpha_{L_i,\lambda_i}$ to obtain a non-degenerate hypertight form $\alpha'_{L_i,\lambda_i}$ (Lemma \ref{noncontractile}). For $\lambda_i$ small enough and for small perturbations, the periodic Reeb orbits of $\alpha'_{L_i,\lambda_i}$ with period smaller than $L_i$ are in bijection with the periodic Reeb orbits of $\alpha_{L_i,\lambda_i}$ with period smaller than $L_i$ and the difference between their period and the period of the associated $R_\alpha$ periodic orbits is bounded by $\frac{1}{2}$. Thus, there exists $C>0$ such that for all $i\in \bb N^*$ and for all $L\leq L_i$ 
\[\#C_{\leq L}^\t{cyl}(M,\alpha'_{L_i,\lambda_i})\leq C\#\left(\sigma(\alpha)\cap[0,L+1]\right).\]
Let  $\alpha'=f_{L_i,\lambda_i}\alpha'_{L_i,\lambda_i}$. As the $\alpha'_{L_i,\lambda_i}$ are perturbations of $\alpha$, there exists $D>0$ such that
\[\frac{1}{D}<\displaystyle{\sup_{p\in M}}\left\{f_{L_i,\lambda_i}(p),\frac{1}{f_{L_i,\lambda_i}(p)}\right\}<D.\]
By invariance of cylindrical contact homology (Corollary \ref{proposition_invariance_croissance_cylindrique}) and by \cite[Section 10]{ColinHonda08}, there exists $C(D)$ such that, for all $L>0$ and for all $i$, \[\rank(\psi_L)\leq\rank(\psi^i_{C(D)L})\] where $\psi^i_L : HC_{\leq L}^\t{cyl}(M,\alpha'_{L_i,\lambda_i})\to HC^\t{cyl}(M,\alpha'_{L_i,\lambda_i})$ and $\psi_L : HC_{\leq L}^\t{cyl}(M,\alpha')\to HC^\t{cyl}(M,\alpha')$ are the maps defining the direct limit. Hence, \[\rank(\psi_L)\leq C\#\left(\sigma(\alpha)\cap[0,C(D)L+1]\right)\] and $\rank(\psi_L)$ exhibits a quadratic growth.
\end{proof}

\subsection{Holomorphic cylinders and Morse-Bott theory}\label{section_control_cylinder}
Let $\eta$ be a loop that is a multiple of the fiber or projects on a multiple of a connected component of $\Gamma$.
By Lemma \ref{OP_et_points_critiques_cas_cloisonné}, there exist $L>0$ and $\lambda>0$ such that all the $R_{\alpha_{L,\lambda}}$-periodic orbits homotopic to $\eta$ are non-degenerate, associated to a critical point of $f_{L'}$ for  $L'\leq L$ and have period smaller than $L$.
Consider $V_j=W_j\cup \bigcup_{k\in K_j} U_k$ where $K_j=\{k,U_k \text{ is adjacent to } W_j \}$. Then $V_j$ is a trivial circle bundle. Extend the trivialization from Proposition \ref{formeparticuliere} in $V_j\simeq S'_j\times S^1$. In these coordinates, $\alpha=\left(f(x)+mg(x)\right)\d y+g(x)\d z$ and the Reeb vector field is positively collinear to
\[\left(\begin{array}{c}
 0 \\
 -g'\\
f'+mg'
\end{array}\right).\]
Note that the $y$-coordinate is negative in $V_j\setminus W_j$.

\begin{lemma}\label{controle_cylindre_cloisonne_I}
Let $u: \mathbb R\times S^1\to \mathbb R\times M$ be a holomorphic cylinder negatively asymptotic to $\gamma\in W_j$. Then $u_M(\mathbb R\times S^1)\subset W_j$.
\end{lemma}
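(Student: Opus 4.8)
The plan is to use the positivity-of-intersection results of Section~\ref{section_positivity_intersection} to show that the holomorphic cylinder $u$ cannot escape the piece $W_j$ through any of the foliated tori that bound $W_j'$. More precisely, $W_j$ is separated from the rest of $M$ by a finite collection of tori $T_{x_0}=\{x_0\}\times\Gamma_k\times S^1$ (one in each adjacent $U_k$, chosen with $x_0$ near $+\tfrac12$ so that $f'(x_0)\neq 0$) which are foliated by Reeb periodic orbits. By Lemma~\ref{lemme_u_-1} and Lemma~\ref{lemme_cylindres_feuilletes}, a holomorphic cylinder with finite energy crosses such a foliated torus ``in only one direction'': on a collar of any component $C$ of $u^{-1}(\mathbb R\times T_{x_0})$, one of the two sides of $C$ in $\mathbb R\times S^1$ maps into $\{x<x_0\}$ and the other into $\{x>x_0\}$, with the side determined by the comparison between the direct basis $(p,R)$ and $(R,p)$ on $T_{x_0}$.

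The key step is to pin down that ``one direction'' using the asymptotics. Since $u$ is negatively asymptotic to $\gamma\subset W_j$, the negative end of the cylinder lies in $W_j$, i.e.\ on the $\{x>x_0\}$ side of each separating torus $T_{x_0}$ (with the convention that $W_j$ corresponds to $x$ near $+1$ in the $U_k$-coordinates). First I would observe that if $f_u(\mathbb R\times S^1)$ is not contained in $W_j$, then $u$ must actually cross one of the separating tori: the image meets $\{x=x_0\}$ for some separating $T_{x_0}$, and by shrinking the interval (Lemma~\ref{lemme_u_-1}) we may assume it meets it transversally along smooth circles homotopic to $\{*\}\times S^1$, each of which must be non-contractible in $\mathbb R\times S^1$ and hence separates the cylinder into a neighborhood of $-\infty$ and a neighborhood of $+\infty$. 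The component $C$ adjacent to the negative end sees, on its $A_-$ side (the side toward $-\infty$, containing the asymptotic orbit $\gamma$), points with $x>x_0$, since $\gamma\subset W_j$. By Lemma~\ref{lemme_cylindres_feuilletes} this forces the orientation situation in which $f_u(A_+)\subset\{x<x_0\}$ — so the ``outward'' side of every such circle goes further into the $U_k$ region, i.e.\ toward smaller~$x$, never back.

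Now the positive end of the cylinder must be asymptotic to some periodic orbit $\gamma^+$ homotopic into the class $[a]$. Following the cylinder from the negative end outward through the innermost separating circle $C$, the component $A_+$ lands in the region $x<x_0$ of the adjacent $U_k$. But then, to have finite energy and to close up toward a periodic orbit, $u$ would have to either stay in that thin shell $\{x<x_0\}\cap U_k$ — impossible, since the only periodic orbits there are multiples of $\Gamma_k$-type orbits incompatible with being asymptotic into $W_j$-adjacent classes by the homotopy constraints of Lemma~\ref{OP_et_points_critiques_cas_cloisonné} — or cross a further foliated torus at a smaller value of $x$, at which the same orientation analysis again forces motion only toward still smaller $x$, eventually exiting $U_k$ on the far side into $W_{j'}$ or contradicting finiteness of energy. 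I expect the main obstacle to be making this ``monotone in $x$'' argument fully rigorous across possibly several nested foliated tori with possibly many intersection circles: one has to check that the one-directional crossing of Lemma~\ref{lemme_cylindres_feuilletes} is consistent and compatible for all the circles in $u^{-1}(\mathbb R\times T_{x_0})$ simultaneously (they all inherit the same orientation of $\{*\}\times S^1$, so the sign in Lemma~\ref{lemme_cylindres_feuilletes} is uniform), and that the asymptotic orbit at the positive end, which by Lemma~\ref{OP_et_points_critiques_cas_cloisonné} lies in $\dom(f_{L'})$ inside a single piece, cannot be reached from $W_j$ once the cylinder has been pushed out through $T_{x_0}$ — this is where the homotopy-class bookkeeping from the proof of Lemma~\ref{OP_et_points_critiques_cas_cloisonné} (the $W_j$ and $U_k$ trivializations and the fiber/$\Gamma_k$ multiplicities) does the real work. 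Combining the one-directional crossing with the fact that both asymptotic orbits of a cylinder homotopic to $a$ lie in the same piece then yields $f_u(\mathbb R\times S^1)\subset W_j$.
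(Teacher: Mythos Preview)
Your proposal assembles the right tools (Lemmas~\ref{lemme_u_-1} and~\ref{lemme_cylindres_feuilletes}) but misses the direct contradiction and instead embarks on an unnecessary and incomplete ``tracking forward'' argument about the positive end.

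The point you are not using is that the vector $p$ in Lemma~\ref{lemme_cylindres_feuilletes} is \emph{determined} by the homotopy class of $\gamma$: the circle $C$ is homotopic to the core of $\mathbb R\times S^1$, so $f_u(C)$ is freely homotopic to $\gamma$, and hence $p$ is the direction of $\gamma$ in $T_{x_0}$. Since $\gamma\subset W_j$ is a power of the fiber, in the $W'_j$-trivialization one has $p=(0,1)$ if $W_j\subset W_+$ and $p=(0,-1)$ if $W_j\subset W_-$. Now the Reeb vector on $T_{x_0}\subset W'_j\setminus W_j$ has negative $y$-component (this is the observation just before the lemma), so one checks directly whether $(p,R)$ is a direct basis. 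Lemma~\ref{lemme_cylindres_feuilletes} then \emph{prescribes} which side $f_u(A_-)$ lies on, and in both cases this is the side \emph{away} from $W_j$. But $A_-$ is the side of $C$ toward $-\infty$, hence toward $\gamma\in W_j$; choosing $C$ to bound the component asymptotic to $-\infty$, one has $f_u(A_-)$ on the $W_j$ side. Contradiction, at a single torus, with no reference to $\gamma^+$.

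Your inversion of Lemma~\ref{lemme_cylindres_feuilletes} (``$A_-$ lands on the $W_j$ side, so this forces the orientation situation in which $A_+$ lands on the other side'') treats the $(p,R)$ orientation as something to be read off from where $A_-$ happens to land. It is not: it is fixed by the geometry, and the contradiction is precisely that the side Lemma~\ref{lemme_cylindres_feuilletes} assigns to $A_-$ disagrees with the side dictated by the negative asymptotic. Your subsequent argument about where $\gamma^+$ can live, and the monotone-in-$x$ tracking across further tori, is therefore superfluous; and as you yourself note, it is also not rigorous as written (you would in any case need the same $(p,R)$ computation at each torus to make it work).
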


\begin{proof}
We prove the lemma by contradiction. Assume $u_M(\mathbb R\times S^1)\cap U_l\neq\emptyset$ for $l\in K_j$, then there exists an open interval $I$ such that, in $I\times S^1\times S^1\subset U_l$,
\begin{enumerate}
  \item $\alpha=f_1(x)\d\theta+g_1(x)\d z$ in the trivialization of $V_j$;
  \item $u_M(\mathbb R\times S^1)\cap\{x\}\times S^1\times S^1 \neq\emptyset$ for all $x\in I$.
\end{enumerate}
By Lemma \ref{lemme_u_-1}, for all $x_0\in I$,
\[u^{-1}\left(u(\mathbb R\times S^1)\cap\mathbb R\times T_{x_0}\right)\]
is a finite union of smooth circles homotopic $\{*\}\times S^1$. 
For all $l\in K_j$, choose $x_0\in I$ and cut $\mathbb R\times S^1$ along the associated circles.
Choose the connected component asymptotic to $-\infty\times S^1$. Let $C$ denote the oriented boundary of this component and choose a collar neighborhood $A=\overline{A_+}\cup\overline{A_-}$ of $C$ as in Lemma \ref{lemme_cylindres_feuilletes}: $A_\pm$ are open annuli in the connected component of $\mathbb R\times S^1\setminus C$ asymptotic to $\pm\infty\times S^1$. Let $W_+$ (resp $W_-$) denote the union of the connected components of $W$ such that the Reeb vector field is positively (resp negatively) tangent to the fiber.
 
If $\gamma\subset W_+$, the line in $T_{x_0}$ tangent to $p=(0,1)$ is in the homotopy class of $\gamma$. Hence $(p,R_\alpha)$ is an oriented basis. By Lemma \ref{lemme_cylindres_feuilletes},
$u_M\left(A_-\right)\subset(x_-,x_0)\times S^1\times S^1$.
\begin{center}
 \begin{tikzpicture}
  \draw [->](0,0) -- (4,0);
  \draw (2,-0.25) -- (2,0.6);
  \draw node at (0,0.4) {$W_-$}; 
  \draw node at (4,0.4) {$W_+$};
  \draw node at (2,-0.4) {$x_0$}; 
  \draw node at (1.3,.4) {$u_M(A_-)$}; 
  \draw node at (2.7,.4) {$u_M(A_+)$}; 
  \draw node at (4,-0.4) {$x$};
\end{tikzpicture}
\end{center}
Yet $u_M\left(A_-\right)\subset(x_0,x_+)\times S^1\times S^1$ as $u$ is negatively asymptotic to $\gamma$. This leads to a contradiction.

If $\gamma\subset W_-$, the line tangent to $p=(0,-1)$ in $T_{x_0}$ is in the homotopy class of $\gamma$ and $(p,R_\alpha)$ is not an oriented basis. Thus
$u_M\left(A_-\right)\subset(x_0,x_+)\times S^1\times S^1$. 
\begin{center}
 \begin{tikzpicture}
  \draw [->](0,0) -- (4,0);
  \draw (2,-0.25) -- (2,0.6);
  \draw node at (0,0.4) {$W_-$}; 
  \draw node at (4,0.4) {$W_+$};
  \draw node at (2,-0.4) {$x_0$}; 
  \draw node at (1.3,.4) {$u_M(A_+)$}; 
  \draw node at (2.7,.4) {$u_M(A_-)$}; 
  \draw node at (4,-0.4) {$x$};
\end{tikzpicture}
\end{center}
This leads to a contradiction as $u$ is negatively asymptotic to $\gamma$.
\end{proof}

\begin{lemma}\label{controle_cylindre_cloisonne_II}
Let $u : \mathbb R\times S^1\to \mathbb R\times M$ a holomorphic cylinder negatively asymptotic to $\gamma\in U_j$. Then $u_M(\mathbb R\times S^1)\subset U_j$ and $u_M(\mathbb R\times S^1)\subset\dom(\overline{f_L})$
\end{lemma}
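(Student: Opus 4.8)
The plan is to follow the scheme of the proof of Lemma~\ref{controle_cylindre_cloisonne_I}, the only change being that $\gamma$ now lies on a torus $T_{x_0'}$ inside the thickened‑torus piece $U_j\simeq\mathopen]-\frac{1}{2},\frac{1}{2}\mathclose[\times\Gamma_j\times S^1$, where $\alpha=f(x)\,\d y+g(x)\,\d z$ with $g'>0$, rather than in a fibered piece $W_j$. For the first assertion I would argue by contradiction: if $f_u$ left $U_j$ it would reach one of the boundary tori $x=\pm\frac{1}{2}$, hence meet $\mathbb R\times T_x$ for every $x$ in a nonempty interval $I$ lying between $x_0'$ and that boundary, and I would pick $x_0\in I$ with rational Reeb slope, so that $T_{x_0}$ is foliated by Reeb orbits (such $x_0$ are dense). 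Lemmas~\ref{lemme_u_-1} and~\ref{lemme_cylindres_feuilletes} then apply exactly as in Lemma~\ref{controle_cylindre_cloisonne_I}: cutting $\mathbb R\times S^1$ along $u^{-1}\bigl(u(\mathbb R\times S^1)\cap(\mathbb R\times T_{x_0})\bigr)$, keeping the component $D$ carrying the negative puncture, and taking a collar $A=\overline{A_+}\cup\overline{A_-}$ of $C=\partial D$, one gets that $f_u(A_-)$ lies on one definite side of $T_{x_0}$.

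The point is to compute that side and clash with the asymptotics. Since $C$ is homotopic to $\{*\}\times S^1$, the loop $u(C)$ is freely homotopic to $\gamma$ in $M$; as $T_{x_0}$ is incompressible, $u(C)$ is homotopic to $\gamma$ \emph{inside} $T_{x_0}$, so the direction $p$ is that of the Reeb orbits on $T_{x_0'}$, i.e.\ of $(-g'(x_0'),f'(x_0'))$ in the $(y,z)$ coordinates. The Reeb direction on $T_{x_0}$ is that of $(-g'(x_0),f'(x_0))$, so $p\wedge R$ has the same sign as
\[g'(x_0)\,g'(x_0')\left(\frac{f'(x_0)}{-g'(x_0)}-\frac{f'(x_0')}{-g'(x_0')}\right),\]
which, since $g'>0$ on $U_j$, is the sign of $\mu(x_0)-\mu(x_0')$ with $\mu(x)=f'(x)/(-g'(x))$. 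By the computation recorded in the proof of Lemma~\ref{lemme_croissance_sigma}, $\mu$ is strictly monotone on $U_j$, so this sign is the same for every $x_0\in I$ (all of which lie on one side of $x_0'$). A short check then shows that this is exactly the sign for which Lemma~\ref{lemme_cylindres_feuilletes} forces $f_u(A_-)$ onto the side of $T_{x_0}$ \emph{not} containing $x_0'$, contradicting that $u$ is negatively asymptotic to $\gamma\subset T_{x_0'}$. Hence $f_u(\mathbb R\times S^1)\subset U_j$.

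For the second assertion I would use the first one. Once $f_u(\mathbb R\times S^1)\subset U_j$, both ends of the cylinder are Reeb orbits of $\alpha_{\lambda,L}$ lying in $U_j$ and freely homotopic to $\gamma$; by Lemma~\ref{OP_et_points_critiques_cas_cloisonné} they correspond to critical points of the Morse functions on the spaces $S_{L'}$, and since $\mu$ is injective they both sit on (the perturbation of) $T_{x_0'}\subset N_T$. If $f_u$ left the thin neighbourhood $\dom(\overline{f_T})$ of $N_T$, it would cross a torus $T_{x_0}$ of rational slope with $x_0$ just outside the $x$‑range of $\dom(\overline{f_T})$ but still inside $U_j$, and such a $T_{x_0}$ is still foliated by $\alpha_{\lambda,L}$‑Reeb orbits since the perturbation is supported in $\dom(\overline{f_T})$. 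Applying Lemmas~\ref{lemme_u_-1} and~\ref{lemme_cylindres_feuilletes} at this torus, Lemma~\ref{lemme_cylindres_feuilletes} puts $f_u(A_-)$ and $f_u(A_+)$ on opposite sides of $T_{x_0}$, whereas both punctures being asymptotic to orbits on the $x_0'$‑side of $T_{x_0}$ forces both $f_u(A_-)$ and $f_u(A_+)$ onto that same side — a contradiction. Hence $f_u(\mathbb R\times S^1)\subset\dom(\overline{f_T})$.

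The main obstacle, as in Lemma~\ref{controle_cylindre_cloisonne_I}, is the orientation bookkeeping in the first assertion: one must verify, in each position of $x_0'$ relative to $0$ and to the boundary component through which $f_u$ tries to escape, that the constant sign of $p\wedge R$ delivered by the monotonicity of $\mu$ is the one that genuinely clashes with Lemma~\ref{lemme_cylindres_feuilletes} and is not instead consistent with it. Everything else is a mechanical reuse of Lemmas~\ref{lemme_u_-1} and~\ref{lemme_cylindres_feuilletes} together with the incompressibility of the tori $T_x$.
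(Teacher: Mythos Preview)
Your approach is the same as the paper's: argue by contradiction, cut at a torus $T_{x_1}$ via Lemma~\ref{lemme_u_-1}, and use Lemma~\ref{lemme_cylindres_feuilletes} together with the monotone rotation of the Reeb direction to force $f_u(A_-)$ onto the side of $T_{x_1}$ not containing $\gamma$. Two remarks on the execution.

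First, the paper does not separate the two assertions. Writing $\gamma\subset T_{x_0}$, the single contradiction argument works at \emph{any} $x_1\neq x_0$ in $\left]-\tfrac12,\tfrac12\right[$ (there is no need to restrict to rational slopes: Lemmas~\ref{lemme_u_-1} and~\ref{lemme_cylindres_feuilletes} only require the Reeb field to be tangent to $T_{x_1}$). The paper just records the two cases $x_1>x_0$ and $x_1<x_0$, obtaining in both that $f_u(A_-)$ lies on the side of $T_{x_1}$ away from $x_0$, which is impossible since the component carrying the negative puncture does not recross $T_{x_1}$ and contains $\gamma\subset T_{x_0}$. This simultaneously proves $f_u(\mathbb R\times S^1)\subset U_j$ and $f_u(\mathbb R\times S^1)\subset\dom(\overline{f_T})$. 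Your first paragraph already contains this argument; your third paragraph is then superfluous. Incidentally, the paper phrases the orientation check as ``$f'$ is increasing'' (from the strict convexity of $f$), which together with $g'>0$ on $U_j$ is equivalent to your monotonicity of $\mu=f'/(-g')$.

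Second, the alternative argument you give for the second assertion has a small gap. From Lemma~\ref{lemme_cylindres_feuilletes} you correctly get $f_u(A_-)$ and $f_u(A_+)$ on opposite sides of $T_{x_0}$, but ``both punctures lie on the $x_0'$-side'' does \emph{not} force $f_u(A_+)$ onto that side: the collar $A_+$ sits in a component of $\mathbb R\times S^1\setminus C$ that may well recross $T_{x_0}$ before reaching the positive puncture. Only the $A_-$ half is safe, because the component carrying the negative puncture has boundary exactly $C$ and hence stays on one side of $T_{x_0}$. That half, combined with the sign computation of your first paragraph, already gives the contradiction --- which is precisely the paper's one-shot argument.
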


\begin{proof}
Consider $x_0$ such that $\gamma\in T_{x_0}$ in the trivialization $(-\frac{1}{2},\frac{1}{2})\times S^1\times S^1$ of $U_j$.  We prove the lemma by contradiction. Thus there exists an open interval $I\subset (-\frac{1}{2},\frac{1}{2})$ such that $\alpha=f(x)\d y+g(x)\d z$ and $f(\mathbb R\times S^1)\cap\{x\}\times S^1\times S^1 \neq\emptyset$ for all $x\in I$. By Lemma \ref{lemme_u_-1}, for all $x_1\in I$,
\[u^{-1}\left(u(\mathbb R\times S^1)\cap\mathbb R\times T_{x_1}\right)\]
is a finite union of smooth circles homotopic to $\{*\}\times S^1$. Cut $\mathbb R\times S^1$ along these circles and denote by $C$ the oriented boundary of the component asymptotic to $-\infty\times S^1$. 
Let $p$ be a vector tangent to $T_{x_0}$ so that the straight line in $T_{x_0}$ directed by $p$ is in the homotopy class $[\gamma]$. Consider a collar neighborhood $A=\overline{A_+}\cup\overline{A_-}$ of $C$ as in Lemma \ref{lemme_cylindres_feuilletes}: $A_\pm$ are open annuli in the connected component of $\mathbb R\times S^1\setminus C$ asymptotic to $\pm\infty\times S^1$.
 
If $x_1>x_0$ then $(p,R_\alpha)$ is not an oriented basis ($f'$ is increasing) and $u_M\left(A_-\right)\subset(x_1,x_+)\times S^1\times S^1$ by Lemma \ref{lemme_cylindres_feuilletes}. If $x_1<x_0$ then $(p,R_\alpha)$ is an oriented basis and $u_M\left(A_-\right)\subset(x_-,x_1)\times S^1\times S^1$.
\begin{center}
 \begin{tikzpicture}
  \draw [->](0,0) -- (8,0);
  \draw (2,-0.25) -- (2,0.6); 
  \draw (4,-0.25) -- (4,0.25);
  \draw (6,-0.25) -- (6,0.6);
  \draw node at (8,-0.4) {$x$};

  \draw node at (2,-0.4) {$x_1$}; 
  \draw node at (1.3,.4) {$u_M(A_-)$}; 
  \draw node at (2.7,.4) {$u_M(A_+)$}; 

  \draw node at (6,-0.4) {$x_1$}; 
  \draw node at (5.3,.4) {$u_M(A_+)$}; 
  \draw node at (6.7,.4) {$u_M(A_-)$}; 

  \draw node at (4,-0.4) {$x_0$};
\end{tikzpicture}
\end{center}
This leads to a contradiction as $u$ is negatively asymptotic to $\gamma$.
\end{proof}

\begin{lemma}\label{prolongementW_jU_i}
For all $j=1\dots m$, there exists a contact closed manifold without boundary $(\tilde{W}_j,\tilde{\alpha})$ extending $(W_j,\alpha)$ such that $\tilde{\alpha}$ is of Morse-Bott type. For all $i=1\dots n$, there exists a contact closed manifold without boundary $(\tilde{U}_i,\tilde{\alpha})$ extending $(U_i,\alpha)$ such that $\tilde{\alpha}$ is of Morse-Bott type. 
\end{lemma}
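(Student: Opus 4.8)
The plan is to attach to each boundary torus an explicit $S^1$-invariant model, to check that the resulting closed form is contact and of Morse--Bott type, and not to keep track of the diffeomorphism type of the closed manifolds one obtains.

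\textbf{The components $W_j$.} By Proposition~\ref{formeparticuliere}, $W_j$ is a trivial circle bundle $S'_j\times S^1$ over a compact surface with boundary, carrying the $S^1$-invariant form $\alpha=\beta+\epsilon\,\d z$, whose Reeb vector field is $\epsilon\frac{\partial}{\partial z}$, and such that near each boundary torus $\alpha=F(s)\,\d y+\epsilon\,\d z$ in collar coordinates with $F$ monotone. I would cap each boundary circle of $S'_j$ by a disc, that is, glue a solid torus $D^2\times S^1$ to each boundary torus of $W_j$, and equip it with $\tilde\alpha=u(\rho)\,\d y+v(\rho)\,\d z$ (with $(\rho,y)$ polar coordinates on $D^2$). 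The planar path $\rho\mapsto(u(\rho),v(\rho))$ is chosen to agree with the boundary data of $\alpha$ for $\rho$ near $1$; to satisfy $u(\rho)=O(\rho^2)$ and $v\equiv v_0\neq 0$ for $\rho$ near $0$, so that $\tilde\alpha$ is smooth along the core circle; and so that $u'v-uv'$ never vanishes for $\rho>0$ while $\rho^{-1}(u'v-uv')$ extends smoothly and without zero across $\rho=0$ --- that is, the path misses the origin and turns around it monotonically. One can moreover take the path so that $v$ is locally constant outside finitely many subintervals of $\mathopen]0,1\mathclose[$ on which $u'/v'$ is strictly monotone; then the Reeb field of $\tilde\alpha$ is a locally constant multiple of $\frac{\partial}{\partial z}$ away from those subintervals, and on them behaves as in the thickened-torus models underlying Lemma~\ref{lemme_croissance_sigma}. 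Hence $\sigma(\tilde\alpha)$ is discrete, every $N_L$ is a smooth closed submanifold (a union of fibres, of finitely many tori, or all of $\tilde W_j$), and $\d\tilde\alpha$ has locally constant rank on it: $\tilde\alpha$ is of Morse--Bott type.

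\textbf{The components $U_i$.} Since $\Gamma_i$ is a circle, $U_i=\mathopen]-\tfrac12,\tfrac12\mathclose[\times S^1\times S^1$ is a thickened torus with $\alpha=f(x)\,\d y+g(x)\,\d z$, exactly the model of the standard example on $T^3$. As $f<0$, the arc $x\mapsto(f(x),g(x))$ stays in a half-plane, and the contact condition $f'g-fg'\neq 0$ says it turns monotonically. I would extend it to a smooth periodic path $x\mapsto(\tilde f(x),\tilde g(x))$ that winds exactly once around the origin, still turns monotonically, and has velocity direction piecewise monotone with finitely many pieces. Then $\tilde\alpha=\tilde f(x)\,\d y+\tilde g(x)\,\d z$ on $\tilde U_i=S^1\times S^1\times S^1$ extends $\alpha$ and is contact; a torus $\{x=x_0\}$ is foliated by Reeb orbits precisely when $\tilde f'(x_0)$ and $\tilde g'(x_0)$ are rationally dependent, the corresponding periods form a discrete set as in Lemma~\ref{lemme_croissance_sigma}, and the $N_L$ are unions of such tori, so $\tilde\alpha$ is of Morse--Bott type (this is the situation of Theorem~\ref{theoreme_MB_T_3_invariant_modules} and the remark following it). The winding number is taken equal to $1$ so that, in the application to Theorem~\ref{theoreme_cloisonne}, $(\tilde U_i,\tilde\alpha)$ contributes a single copy of $H_*(S^1)$ to contact homology.

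Apart from the routine checks, the work lies in producing, on each solid torus glued to a $W_j$, a planar path that simultaneously realises the prescribed boundary $1$-jet, is smooth at the core, satisfies the monotone-winding contact condition, and keeps the Reeb dynamics tame enough for the Morse--Bott conditions --- in particular for $\sigma(\tilde\alpha)$ to be discrete. This is elementary but requires some care; the analogous extension for the $U_i$ is the classical construction used for the forms $\ker(\sin(nx)\,\d y+\cos(nx)\,\d z)$.
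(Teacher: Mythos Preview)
Your treatment of the $U_i$ is correct and coincides with the paper's: extend the arc $(f,g)$ to a periodic path, obtaining a contact form of standard type on $T^3$.

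For the $W_j$, however, there is a real gap. Since $\alpha=F(x)\,\d y+\epsilon\,\d z$ in a collar of $\partial W_j$, smooth matching forces $v\equiv\epsilon$ on a collar of $\rho=1$ in your cap. The period-$1$ locus $N_1=\{p:\phi_1(p)=p\}$ therefore contains $W_j$ together with this collar. If $v$ leaves the value $\epsilon$ anywhere in the cap, $N_1$ acquires a boundary torus at the first such $\rho$, and $\tilde\alpha$ fails condition~(2) of the Morse--Bott definition --- the obstacle is not discreteness of $\sigma$, as you suggest, but that $N_1$ is not a closed submanifold. So one is forced to keep $v\equiv\epsilon$ on the whole cap; then $\tilde\alpha=u(\rho)\,\d y+\epsilon\,\d z$ with $\epsilon u'>0$ and $u(0)=0$, hence $\epsilon F(1)>0$. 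This sign is not guaranteed: by Stokes, any $1$-form $\tilde\beta$ on $D^2$ with $\epsilon\,\d\tilde\beta>0$ satisfies $\epsilon\int_{\partial D^2}\tilde\beta>0$, whereas the boundary data give $\int_{\partial D^2}\tilde\beta=F(1)$, and nothing in Proposition~\ref{formeparticuliere} fixes the sign of $\epsilon F(1)$.

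The paper resolves this by gluing along a shear $(x,y,z)\mapsto(x,y,z+ey)$, which replaces $F$ by $F+e\epsilon$; choosing $e$ makes $\epsilon(F+e\epsilon)>0$, after which one extends by a form $\beta'+\epsilon\,\d z$ over a surface $S'$. The Reeb field is then $\epsilon\,\partial_z$ on all of $\tilde W_j$, every $N_k$ equals $\tilde W_j$, and the form is Morse--Bott in the simplest possible way --- which also makes Theorem~\ref{theoreme_MB_S_1_invariant_modules} apply directly in the proof of Theorem~\ref{theoreme_cloisonne}. Your solid-torus idea can be repaired analogously by gluing with a sheared framing.
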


\begin{proof}
In the trivialization $W_j\simeq S_j\times S^1$, the contact form is $\alpha=\beta+e\d z$ and, near $\partial W_j$, there exist coordinates $(x,y,z)\in [0,1]\times S^1\times S^1$ such that $\{1\}\times S^1\times S^1\subset \partial W_j$ and $\alpha=f(x)\d y+e\d z$. 
Let $S'$ be an oriented compact surface such that $\partial S'$ and $\partial S_j$ have the same number of connected components. Choose a pairing between these components and glue a neighborhood of each component of $\partial W_j$ to a neighborhood of the associated component of $\partial S'\times S^1$ with the diffeomorphism $\phi : (x,y,z)\mapsto (x,y,z+ky)$ where $k\in\mathbb Z$. Let $\tilde{W}_j$ denote the resulting manifold. Near $\partial S'\times S^1$,\[\phi^*\alpha=\big(f(x)+ke\big)\d y+e\d z=\tilde{\beta}_k+e\d z.\] For each component, choose $k$ so that $e\tilde{\beta}_k$ is positive on $\partial S'$. There exists a $1$-form $\beta'$ on $S'$ such that $e\d\beta'>0$ and $\tilde{\beta}_k=\beta'$ near the boundary. The contact form $\beta'+e \d z$ extends $\phi^*\alpha$ and the induced form $\tilde{\alpha}$ on $\tilde{W}_j$ is of Morse-Bott type.

On $U_j=A\times S^1$, the contact form is written $\alpha=f(x)\d y+g(x)\d z$. Extend $f$ and $g$ to maps $\tilde f$ and $\tilde g$ on $S^1$ so that $\tilde\alpha=\tilde f(x)\d y+\tilde g(x)\d z$ is a contact form on $T^3$. The form $\tilde\alpha$ is of Morse-Bott type.
\end{proof}

\begin{proof}[Proof of Theorem \ref{theoreme_cloisonne}]
It remains to compute contact homology for $\eta$ satisfying condition \eqref{C1} or \eqref{C2}. By Lemma \ref{OP_et_points_critiques_cas_cloisonné}, there exist $L>0$ and $\lambda>0$ such that all the $R_{\alpha_{L,\lambda}}$-periodic orbits homotopic to $\eta$ have a period smaller than $L$, are non-degenerate and associated to a critical point of $f_{L'}$ for $L'\leq L$. 

Extend $\overline{f}_L$ to the contact manifolds $\tilde{W}_j$ and $\tilde{U}_i$ (Lemma \ref{prolongementW_jU_i}) to get a Morse-Bott perturbation. Let $(\lambda_l)$ be a decreasing sequence such that $\lambda_l\in(0,\lambda]$ and $\lim_{l\to\infty}\lambda_l=0$. Choose almost complex structures $J_{\lambda_l}$ adapted to $(M,\alpha_{L,\lambda_l})$ and $\tilde{J}_{\lambda_l}$ adapted to the union of $(\tilde{W}_j,\tilde{\alpha}_{L,\lambda_l})$ for $j=1\dots m$ and $(\tilde{U}_i,\tilde{\alpha}_{L,\lambda_l})$ for $i=1\dots n$ so that
\begin{enumerate}
  \item $J_{\lambda_l}=\tilde{J}_{\lambda_l}$ on $\bb R\times W_j$ and $\bb R\times U_i$;
  \item $J_{\lambda_l}$ and $\tilde{J}_{\lambda_l}$ are $S^1$-invariant on $\bb R\times N_{L'}$ for all $L'\leq L$;
  \item $(f_{L'},g_{L'})$ (resp $(\tilde f_{L'},\tilde g_{L'})$) is Morse-Smale on $S_{L'}$ where  $g_{L'}$ (resp $\tilde g_{L'}$) is the metric induced by $J_{\lambda_l}$ (resp $\tilde{J}_{\lambda_l}$) for all $L'\leq L$.
\end{enumerate}

By Theorems \ref{theoreme_MB_S_1_invariant_modules} and \ref{theoreme_MB_T_3_invariant_modules}, for all $j=1\dots m$ (resp $i=1\dots n$) and for $l$ big enough, $\tilde{J}_{\lambda_l}$-holomorphic cylinders asymptotic periodic Reeb orbits in $W_j$ (resp. $U_i$) are contained in $W_j$ (resp. $U_i$) as gradient lines between two points in $S_j$ are contained in $S_j$. 

By Lemmas \ref{controle_cylindre_cloisonne_I} and \ref{controle_cylindre_cloisonne_II}, for all $j=1\dots m$ (resp $i=1\dots n$) and for $l$ big enough, $J_{\lambda_l}$-holomorphic cylinders asymptotic periodic Reeb orbits in $W_j$ (resp. $U_i$) are contained in $W_j$ (resp. $U_i$). Therefore the differential of contact homology is well defined and can be identified with the differential in the Morse-Bott case and thus with the differential in Morse homology. Hence
\begin{enumerate}
 \item if $[\eta]=[\text{fiber}]^k$ with $\pm k>0$, \[HC_*^{[\eta]}(M,\alpha,\mathbb Q)=\displaystyle{\bigoplus_{W_j\subset W_\pm}} H^{\t{Morse}}_*(W_j, (f_1,g_1),\mathbb Q)\]
 \item if $[\pi(\eta)]=[\Gamma_j]^{k'}$ with $k'\neq 0$, \[HC_*^{[\eta]}(M,\alpha,\mathbb Q)=\displaystyle{\bigoplus_{i\in I}} H^{\t{Morse}}_*(S^1, (f_L,g_L),\mathbb Q)\]
\end{enumerate}
where $I=\{i,[\pi(\Gamma_i)]=[\pi(\Gamma_j)]\}$ (we do not consider the graduation in the identifications).
\end{proof}
In their study of algebraic torsion in SFT, Latschev and Wendl \cite{LatschevWendl10} used similar methods to understand holomorphic curves.

\section{Hyperbolicity and exponential growth rate}\label{section_hyperbolic}

In this section we prove Theorem \ref{theoreme_hyperbolique}. This result hinges on the exponential growth of contact homology for a specific family of contact structures (Theorem \ref{theoreme_hyperbolique_homologie}). The invariance of contact homology leads to the exponential growth of $N_L(\alpha)$ for all non-degenerate contact forms. For a general non-degenerate contact form, the proof depends on Hypothesis H.

Let $M$ be a $3$-manifold which can be cut along a nonempty family of incompressible tori $T_1,\dots T_N$ into irreducible manifolds including a hyperbolic component that fibers on the circle. We construct contact forms on each irreducible components and add torsion near the incompressible tori $T_k$ for $k=1\dots N$ (Section \ref{section_equation_pA}). We compute the growth rate of contact homology by controlling the holomorphic cylinders that intersect the tori $T_k$ for $k=1\dots N$ (Section \ref{section_HC_pA}). The study of periodic orbits and contact homology in the hyperbolic component hinges on properties of periodic points of pseudo-Anosov automorphisms recalled in Section \ref{proprietes_pA}.

\subsection{Periodic points of pseudo-Anosov automorphisms}\label{proprietes_pA}
See \cite{CassonBleiler88,FLP91} for a precise definition of  pseudo-Anosov automorphisms. Here, we will only use the properties of pseudo-Anosov automorphisms described in Theorems \ref{theoreme_croissance_pA}, \ref{theoreme_Nielsen_pA}, \ref{theoreme_invariance_Nielsen} and \ref{theoreme_pA_boundary}.
Let $S$ be a compact orientable surface. An automorphism $\psi : S\to S$ is said to be \emph{pseudo-Anosov} if there exist two measured foliations $(\mathscr F^s,\mu^s)$ and $(\mathscr F^u,\mu^u)$ such that $\psi(\mathscr F^u,\mu^u)=(\mathscr F^u,\lambda^{-1}\mu^u)$ and $\psi(\mathscr F^s,\mu^s)=(\mathscr F^s,\lambda\mu^s)$ for a positive real number $\lambda$. In this section, we assume that $S$ has no boundary.

\begin{theorem}[{see \cite[Theorem 11.1]{ColinHonda08}}]\label{theoreme_croissance_pA}
The number of simple $k$-periodic points of a pseudo-Anosov automorphism on $S$ exhibits an exponential growth with $k$.
\end{theorem}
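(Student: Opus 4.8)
\medskip
\noindent\textbf{Proof plan.}
The plan is to reduce the statement to a count in symbolic dynamics by means of a Markov partition, and then to pass from the points fixed by $\psi^{k}$ to the points whose least period is exactly $k$. First I would invoke Thurston's construction of Markov partitions for pseudo-Anosov maps (see \cite{FLP91}): there is a finite family of ``rectangles'' $R_{1},\dots,R_{m}$, each a product of a sub-arc of a leaf of $\mathscr F^{u}$ with a sub-arc of a leaf of $\mathscr F^{s}$, with pairwise disjoint interiors, covering $S$, and adapted to $\psi$ in the Markovian sense. This produces a transition matrix $A\in M_{m}(\{0,1\})$ and a continuous surjection $\pi\colon\Sigma_{A}\to S$ from the subshift of finite type $(\Sigma_{A},\sigma)$ with $\pi\circ\sigma=\psi\circ\pi$; moreover $\pi$ is finite-to-one with fibres of cardinality bounded by some fixed $N$, the only non-injectivity occurring along the $\psi$-orbit of the grid $\partial R=\bigcup_{i}\partial R_{i}$.

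Next I would control the symbolic count. A pseudo-Anosov map is topologically mixing, so $A$ may be taken primitive; by Perron--Frobenius its spectral radius is a simple dominant eigenvalue, which equals the dilatation $\lambda$ because $\sigma$ and $\psi$ have the same topological entropy $\log\lambda$. Hence $\#\operatorname{Fix}(\sigma^{k})=\operatorname{tr}(A^{k})$ satisfies $c_{1}\lambda^{k}\le\operatorname{tr}(A^{k})\le C_{1}\lambda^{k}$ for all $k$ and suitable $0<c_{1}\le C_{1}$. Since $\pi\circ\sigma^{k}=\psi^{k}\circ\pi$ with $\pi$ onto and at most $N$-to-one, the set $\pi(\operatorname{Fix}(\sigma^{k}))$ consists of $\psi^{k}$-fixed points and has at least $\operatorname{tr}(A^{k})/N$ elements; conversely every $\psi^{k}$-fixed point arises this way up to the bounded ambiguity carried by the grid orbit. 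This would give constants with
\[ \frac{c_{1}}{N}\,\lambda^{k}\ \le\ \#\operatorname{Fix}(\psi^{k})\ \le\ C_{2}\,\lambda^{k}\qquad(k\ge 1). \]
Then I would pass to simple periodic points: any $\psi^{k}$-fixed point that is not simple of period $k$ is fixed by $\psi^{d}$ for a proper divisor $d\mid k$, so $d\le k/2$, whence the number $P_{k}$ of simple $k$-periodic points obeys
\[ P_{k}\ \ge\ \#\operatorname{Fix}(\psi^{k})-\sum_{d\mid k,\ d<k}\#\operatorname{Fix}(\psi^{d})\ \ge\ \frac{c_{1}}{N}\lambda^{k}-\sum_{d\le k/2}C_{2}\lambda^{d}\ \ge\ \frac{c_{1}}{N}\lambda^{k}-C_{3}\,\lambda^{k/2}, \]
which grows exponentially in $k$ since $\lambda>1$.

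The main obstacle is the second step: the coding $\pi$ is only a semiconjugacy and fails to be injective along the $\psi$-orbit of $\partial R$, so one must check that the $\psi$-periodic points lying on that orbit contribute only sub-dominantly, so that $\#\operatorname{Fix}(\psi^{k})$ genuinely tracks $\operatorname{tr}(A^{k})$ up to a bounded factor. This is exactly the analysis of periodic orbits of pseudo-Anosov maps carried out by Fathi and Shub in \cite{FLP91}, which one may also cite directly for the asymptotics $\#\operatorname{Fix}(\psi^{k})\sim\lambda^{k}$. An alternative route avoiding Markov partitions is Nielsen--Lefschetz theory: for pseudo-Anosov $\psi$ every fixed point class of $\psi^{k}$ is an essential singleton, so $\#\operatorname{Fix}(\psi^{k})$ equals the Nielsen number $N(\psi^{k})$, and after lifting to the orientation double cover of $S$ --- where the invariant foliations, and hence the action on $H_{1}$, become orientable with spectral radius $\lambda$ --- one gets $|L(\psi^{k})|\sim\lambda^{k}$ and the same exponential lower bound, after which the reduction to simple periodic points is identical.
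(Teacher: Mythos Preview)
Your approach is correct and matches the paper's: the paper does not give a detailed proof of this theorem but simply states that it ``follows from the construction of a Markov partition on $S$'' and cites \cite[\S11]{ColinHonda08}. Your sketch via the symbolic coding, Perron--Frobenius, and the inclusion--exclusion passage from $\#\operatorname{Fix}(\psi^{k})$ to simple $k$-periodic points is the standard way to flesh this out, and your alternative Nielsen--Lefschetz route is also sound (and in fact closely related to the ingredients the paper invokes in Theorems~\ref{theoreme_Nielsen_pA}--\ref{theoreme_invariance_Nielsen}).
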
  

This theorem follows from the construction of a Markov partition on $S$ (see \cite[Section 11]{ColinHonda08}).
Nielsen classes are used to transfer properties of periodic points of a pseudo-Anosov map to properties of periodic points of homotopic diffeomorphisms.
\begin{definition} 
Let $h : S\to S$ be an automorphism. Two fixed points $x$ and $y$ are \emph{in the same Nielsen class} if there exists a continuous map $\delta :[0,1]\to S$ such that $\delta(0)=x$, $\delta(1)=y$ and $h(\delta)$ is homotopic to $\delta$. 
Let $h_t : S\to S, t\in[0,1]$ be a homotopy of automorphism of $S$. Fixed points $x_0$ of $h_0$ and $x_1$ of $h_1$ are \emph{in the same Nielsen class} if there exists a continuous map $\delta :[0,1]\to S$ such that $\delta(0)=x$, $\delta(1)=y$ and $ h_\cdot(\delta(\cdot))$ is homotopic to $\delta$. 
\end{definition} 
One can refer to \cite{Felshtyn00} for more information on Nielsen classes. These definitions extend naturally to periodic points. Two periodic points are in the same Nielsen class of a diffeomorphism $h$ if and only if the induced periodic orbits of the vertical vector field in the mapping torus $(S\times \mathbb R)/h$ are homotopic. Nielsen classes of a pseudo-Anosov automorphisms are very special.

\begin{theorem}[Thurston, Handel {\cite[Lemma 2.1]{Handel85}}]\label{theoreme_Nielsen_pA} 
All the periodic points of a pseudo-Anosov automorphism on $S$ are in different Nielsen classes.
\end{theorem}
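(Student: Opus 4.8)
The plan is to pass to the universal cover $\pi\colon\tilde S\to S$ and reduce everything to the statement that a lift of a pseudo-Anosov map has at most one fixed point; I would prove the contrapositive of the theorem, namely that two periodic points in the same Nielsen class coincide. Fix $k\geq1$ and write $h=\psi^{k}$: this is again pseudo-Anosov, with the same invariant foliations $\mathscr F^{u},\mathscr F^{s}$ and with dilatation factor $\lambda^{k}>1$, and a point of period $k$ is a fixed point of $h$. Suppose $x$ and $y$ are fixed points of $h$ lying in the same Nielsen class, witnessed by a path $\delta$ from $x$ to $y$ with $h\circ\delta$ homotopic to $\delta$ rel endpoints. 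Lift $\delta$ to a path $\tilde\delta$ in $\tilde S$ starting at a point $\tilde x$ over $x$, set $\tilde y=\tilde\delta(1)$, and let $\tilde h$ be the lift of $h$ with $\tilde h(\tilde x)=\tilde x$ (such a lift exists because $h(x)=x$). Then $\tilde h\circ\tilde\delta$ is precisely the lift of $h\circ\delta$ issuing from $\tilde x$, so lifting the homotopy $h\circ\delta\simeq\delta$ rel endpoints forces $\tilde h(\tilde y)=\tilde y$. Hence it suffices to show that $\tilde h$ has at most one fixed point in $\tilde S$: this gives $\tilde x=\tilde y$, whence $x=y$. (The same reduction applied to a common power handles periodic points of a priori different periods, consistently with the dictionary recalled above between Nielsen classes of $\psi^{k}$ and free homotopy classes of vertical periodic orbits in the suspension.)

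To bound $\mathrm{Fix}(\tilde h)$ I would use the lifted invariant measured foliations $\tilde{\mathscr F}^{u},\tilde{\mathscr F}^{s}$ and their transverse measures $\tilde\mu^{u},\tilde\mu^{s}$. For $p,q\in\tilde S$ let $I^{u}(p,q)$ and $I^{s}(p,q)$ denote the $\tilde\mu^{u}$- and $\tilde\mu^{s}$-mass of a path from $p$ to $q$; since $\tilde S$ is simply connected and a pseudo-Anosov foliation has no closed leaves, these numbers are well defined (independent of the path). The relation $\psi(\mathscr F^{u},\mu^{u})=(\mathscr F^{u},\lambda^{-1}\mu^{u})$ gives $h^{*}\mu^{u}=\lambda^{k}\mu^{u}$, which lifts to $\tilde h^{*}\tilde\mu^{u}=\lambda^{k}\tilde\mu^{u}$, and similarly $\tilde h^{*}\tilde\mu^{s}=\lambda^{-k}\tilde\mu^{s}$. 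If $\tilde h$ fixes both $\tilde x$ and $\tilde y$, pushing a path between them forward by $\tilde h$ yields
\[
I^{u}(\tilde x,\tilde y)=\lambda^{k}\,I^{u}(\tilde x,\tilde y),\qquad I^{s}(\tilde x,\tilde y)=\lambda^{-k}\,I^{s}(\tilde x,\tilde y),
\]
and since $\lambda^{k}\neq1$ we obtain $I^{u}(\tilde x,\tilde y)=I^{s}(\tilde x,\tilde y)=0$.

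It then remains to deduce $\tilde x=\tilde y$ from $I^{u}(\tilde x,\tilde y)=I^{s}(\tilde x,\tilde y)=0$. Because the transverse measure of a pseudo-Anosov foliation has full support (every nontrivial transverse arc has positive mass), vanishing of $I^{u}$ means that $\tilde x$ and $\tilde y$ lie on one and the same (possibly singular) leaf $L^{u}$ of $\tilde{\mathscr F}^{u}$, and vanishing of $I^{s}$ means they lie on a single leaf $L^{s}$ of $\tilde{\mathscr F}^{s}$. If $\tilde x\neq\tilde y$, then a subarc of $L^{u}$ and a subarc of $L^{s}$ joining $\tilde x$ to $\tilde y$ would cobound a bigon in the simply connected surface $\tilde S$, which is impossible for the transverse pair of foliations of a pseudo-Anosov map (no bigons; see \cite{FLP91,CassonBleiler88}). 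Hence $\tilde x=\tilde y$, and the theorem follows. In this argument everything except the last step is formal bookkeeping (lifting and the scaling identity); the one substantial input, and the place where the pseudo-Anosov hypothesis is genuinely used, is the no-bigon property of the invariant foliations, which is exactly what prevents two distinct fixed points of $\tilde h$ from sitting simultaneously on a common stable and a common unstable leaf.
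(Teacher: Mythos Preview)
The paper does not prove this theorem: Section~\ref{proprietes_pA} explicitly \emph{recalls} the relevant properties of pseudo-Anosov maps as background and refers to \cite{FLP91,CassonBleiler88,Felshtyn00} for details. There is therefore no ``paper's own proof'' to compare against, and what you have written is a self-contained argument for a result the author is quoting. Your overall strategy --- reduce to showing that any lift $\tilde h$ of $\psi^k$ to the universal cover has at most one fixed point, and detect this via the lifted transverse measures --- is the standard one and is correct in outline.

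There is, however, one genuine slip. You assert that $I^{u}(p,q)$, defined as ``the $\tilde\mu^{u}$-mass of a path from $p$ to $q$'', is \emph{independent of the path} because $\tilde S$ is simply connected. This is false: the transverse measure of an arc is $\int_\gamma |\omega|$ in a local model, and wiggling $\gamma$ back and forth across leaves increases this integral without bound, regardless of the topology of $\tilde S$. What simple connectivity buys you is the \emph{cohomological} integral $\int_\gamma \omega$ when the foliation is transversely oriented, which is a different (signed) quantity. The fix is painless: define $I^{u}(p,q)=\inf_{\gamma}\int_\gamma d\tilde\mu^{u}$ over all paths from $p$ to $q$. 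This is a pseudo-metric, $\tilde h$ is a bijection on paths, and the scaling $\tilde h^{*}\tilde\mu^{u}=\lambda^{k}\tilde\mu^{u}$ still yields $I^{u}(\tilde x,\tilde y)=\lambda^{k}I^{u}(\tilde x,\tilde y)$, hence $I^{u}(\tilde x,\tilde y)=0$; likewise for $I^{s}$. With the infimum definition, $I^{u}(\tilde x,\tilde y)=0$ really does force $\tilde x$ and $\tilde y$ to lie on the same (possibly singular) leaf of $\tilde{\mathscr F}^{u}$ (this is where full support and the tree structure of the leaf space in $\tilde S$ enter), and your concluding no-bigon step --- a stable and an unstable leaf in $\tilde S$ meet in at most one point --- then finishes the proof. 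You should also note that near singularities a ``leaf'' must be read as a singular leaf (a finite union of separatrices), and the no-bigon/CAT(0) argument still applies; your citations to \cite{FLP91,CassonBleiler88} cover this.
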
 
A $k$-periodic point $x$ of $h : S\to S$ is \emph{non-degenerate} if $1$ is not an eigenvalue of $\d h^k(x)$.
For a non-degenerate $k$-periodic point, let $\epsilon_{h^k}(x)$ denote the sign of $\det (\d h^k(x)-\Id)$. If all the periodic points in a given Nielsen class $c$ are non-degenerate, we define
\[\Lambda_{h^k}(c)=\sum_{x\in c}\epsilon_{h^k}(x).\] 

\begin{theorem}[\cite{Jiang83}]\label{theoreme_invariance_Nielsen} 
Let $h_0$ and $h_1$ be two homotopic automorphisms of $S$. Let $x_0$ and $x_1$ be two periodic points of $h_0$ and $h_1$ in the same Nielsen class. If the Nielsen classes $c_0$ of $x_0$ (for $h_0$) and $c_1$ of $x_1$ (for $h_1$) contain only non-degenerate points then $\Lambda_{h_0^k}(c_0)= \Lambda_{h_1^k}(c_1)$. 
\end{theorem}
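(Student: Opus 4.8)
The plan is to deduce Theorem \ref{theoreme_invariance_Nielsen} from the homotopy invariance of the Lefschetz--Nielsen fixed point index, applied to the iterates $h_0^k$ and $h_1^k$. First I would recall that for an automorphism $h\colon S\to S$ of a compact surface and a $k$-periodic Nielsen class $n$ (equivalently a fixed-point Nielsen class of $h^k$), the \emph{fixed point index} $\mathrm{ind}(h^k,n)$ is defined as the local Lefschetz index of $h^k$ summed over the fixed points in $n$; it is an integer depending only on the behavior of $h^k$ near $n$. The key classical fact (see e.g. \cite{Felshtyn00}) is that if $h_0$ and $h_1$ are homotopic, then the homotopy $h_t$ induces a bijection between those Nielsen classes of $h_0^k$ and $h_1^k$ that are ``connected through the homotopy'', and under this bijection the fixed point indices agree: $\mathrm{ind}(h_0^k,n_0)=\mathrm{ind}(h_1^k,n_1)$. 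This is exactly the statement that the Nielsen class, together with its index, is a homotopy invariant, and it is the heart of Nielsen fixed point theory; the hypothesis that $x_0$ and $x_1$ lie in the same Nielsen class for the homotopy $h_t$ is precisely what guarantees $n_0$ and $n_1$ correspond.

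Next I would identify the quantity $\Lambda_{h^k}(n)$ defined in the statement with this index in the non-degenerate case. When every periodic point $x\in n$ is non-degenerate, $1$ is not an eigenvalue of $\d h^k(x)$, so the local Lefschetz index of $h^k$ at $x$ equals $\mathrm{sign}\det(\Id-\d h^k(x))=\mathrm{sign}\det(\d h^k(x)-\Id)\cdot(-1)^{\dim S}$. Since $\dim S=2$, the sign correction is trivial and the local index is exactly $\epsilon_{h^k}(x)$. Summing over $x\in n$ gives $\mathrm{ind}(h^k,n)=\Lambda_{h^k}(n)$. Combining this identification for $h_0$ and for $h_1$ with the homotopy invariance of the index from the previous paragraph yields $\Lambda_{h_0^k}(n_0)=\mathrm{ind}(h_0^k,n_0)=\mathrm{ind}(h_1^k,n_1)=\Lambda_{h_1^k}(n_1)$, which is the claim.

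The main obstacle, and the only place that requires care, is the bookkeeping around iteration and orientation. One must check that a Nielsen class of $k$-periodic points of $h$, as defined via a path $\delta$ with $t\mapsto h_t(\delta(t))$ homotopic to $\delta$ along the homotopy, really does correspond to a single fixed-point Nielsen class of $h_0^k$ and $h_1^k$ compatibly under the homotopy $h_t^k$ — in other words, that ``same Nielsen class for the homotopy'' in the sense of the definition above matches ``same Nielsen class for the induced homotopy of $k$-th iterates.'' This is a formal but slightly delicate verification, because distinct periodic \emph{orbits} may a priori fuse into one fixed-point class of the iterate; here the remark following Theorem \ref{theoreme_Nielsen_pA} (Nielsen classes of periodic points $\leftrightarrow$ homotopy classes of periodic orbits of the suspension flow) gives the right dictionary and keeps the correspondence honest. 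Once that identification is in place, the proof is just the invariance statement of Nielsen fixed point theory plus the elementary computation of local Lefschetz indices at non-degenerate points, and no further input from contact geometry is needed.
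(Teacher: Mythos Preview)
The paper does not actually give a proof of Theorem~\ref{theoreme_invariance_Nielsen}: it is stated as a background fact from Nielsen fixed point theory, with the reader directed to \cite{Felshtyn00} for details. Your proposal is precisely the standard argument underlying that fact---identifying $\Lambda_{h^k}(n)$ with the Nielsen fixed point index in the non-degenerate case and then invoking homotopy invariance of that index---so it is correct and in line with what the paper implicitly relies on.
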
 

\begin{theorem}[{see \cite[Section 11.1]{ColinHonda08}}]\label{theoreme_pA_boundary}
Let $S_1$ be a compact surface with boundary obtained from $S$ after removing a finite number of disjoint open disks. Let $h:S_1\to S_1$ be an automorphism such that $h=\Id$ in a neighborhood of $\partial S_1$ and homotopic to a pseudo-Anosov automorphism $\psi$. Extend $h$ to $S$ by the identity and let $\hat h$ denote the resulting automorphism. Then, there exist a branched cover $\hat{S}$ of $S$ and a pseudo-Anosov map $\hat\psi$ homotopic to $\hat h$ such that the projection of $\hat\psi$ is $\psi$. 
\end{theorem}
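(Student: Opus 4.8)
The plan is to reduce the statement to a two–dimensional fact: cap off the boundary of $S_1$, turning $\psi$ into a homeomorphism of the closed surface $S$ that is pseudo-Anosov away from a finite set of marked points, and then pass to a finite branched cover to upgrade it to a genuine pseudo-Anosov of a closed surface, keeping track of prong numbers throughout.

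\emph{Step 1: capping off.} Since $\psi$ is isotopic to $h$ and $h$ is the identity near $\partial S_1$, $\psi$ fixes each boundary component $\partial_i$ of $S_1$ setwise, and we may take $\psi$ to be the standard pseudo-Anosov representative, so that it also respects the invariant foliations $\mathscr F^{s,u}$ near $\partial_i$. Let $p_i\ge 1$ be the number of prongs of $\mathscr F^s$ along $\partial_i$. Gluing a disk back to each $\partial_i$, with a marked point $c_i$ at its center, and extending $\psi|_{\partial_i}$ over it radially, one obtains $\bar\psi\colon S\to S$ fixing every $c_i$; the foliations extend over the new disks with a $p_i$-pronged singularity at $c_i$, so $\bar\psi$ is pseudo-Anosov on $S\setminus\{c_1,\dots,c_I\}$ with stretch factor $\lambda$, and $\bar\psi|_{S_1}$ is isotopic to $\psi$. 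As each glued disk is contractible and $S$ is aspherical (the case $S=S^2$ being trivial), the isotopy $\psi\simeq h$ extends to a homotopy $\bar\psi\simeq\hat h$ on $S$. The \emph{only} obstruction to $\bar\psi$ being an honest pseudo-Anosov on the closed surface $S$ is the presence of $1$-pronged interior singularities, i.e.\ of those $c_i$ with $p_i=1$ (a $2$-pronged point is regular, a $(\ge 3)$-pronged one is admissible).

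\emph{Step 2: a branched cover curing the $1$-prongs.} I would pass to a finite branched covering $p\colon\hat S\to S$, branched over the $c_i$, chosen so that (a) the ramification index at every point over a $c_i$ with $p_i=1$ is at least $2$, and (b) $\psi$ lifts (construction in Step 3). Pulling back the foliations gives measured foliations $p^{*}\mathscr F^{s,u}$ on $\hat S$. In the local model a $p$-pronged singularity lifted through a branch point of index $d$ becomes $dp$-pronged (and $d=1$ away from branch points), so every singularity of $p^{*}\mathscr F^{s}$ has prong number $\ge 2$ and none equal to $1$: the original $(\ge 3)$-pronged singularities persist, and the points over a $c_i$ with $p_i=1$ become at least $2$-pronged. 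Hence $p^{*}\mathscr F^{s,u}$ define a genuine pseudo-Anosov structure on the closed surface $\hat S$. Lifting $\psi$ from $S_1$ to $\hat S_1:=p^{-1}(S_1)$ and extending over the (disk) components of $\hat S\setminus\hat S_1$ gives $\hat\psi\colon\hat S\to\hat S$ preserving $p^{*}\mathscr F^{s,u}$ with multiplier $\lambda$; thus $\hat\psi$ is pseudo-Anosov and $\hat\psi|_{\hat S_1}$ projects to $\psi$. Lifting the homotopy $\bar\psi\simeq\hat h$ through $p$ (starting from $\hat\psi$, using the homotopy lifting property and that $N$ is $\hat h_{\#}$-invariant) identifies $\hat\psi$, up to homotopy, with a lift of $\hat h$.

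\emph{Step 3: building the cover.} Write $\pi:=\pi_1(S_1)\cong\pi_1(S\setminus\{c_1,\dots,c_I\})$, a free group, and let $\gamma_i\in\pi$ be a loop around $c_i$; for $p_i=1$ the element $\gamma_i$ is nontrivial (as no boundary component of $S_1$ bounds a disk there). Since $\pi$ is residually finite, pick a surjection $\varphi\colon\pi\to G$ onto a finite group with $\varphi(\gamma_i)\ne 1$ for all $i$ with $p_i=1$, and replace $\ker\varphi$ by the intersection $N$ of all its images under automorphisms of $\pi$ of index $[\pi:\ker\varphi]$: there are only finitely many such subgroups, so $N$ has finite index, is characteristic, and satisfies $N\subseteq\ker\varphi$, hence $\gamma_i\notin N$ whenever $p_i=1$. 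The regular cover of $S_1$ associated to $N$ then has all boundary components over $\partial_i$ of multiplicity equal to the order of $\gamma_i$ in $\pi/N$, which is $\ge 2$ when $p_i=1$; filling in disks yields the branched cover $p\colon\hat S\to S$ with the ramification demanded in Step 2 (one checks the extension across the $c_i$ is a manifold and that $\hat S$, corresponding to a finite-index subgroup, is connected). Because $N$ is characteristic it is invariant under $\psi_{\#}$ (well defined up to an inner automorphism, which also preserves $N$), so $\psi$ and $\bar\psi$ lift, giving (b).

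The main difficulty is item (b) carried out \emph{simultaneously} with (a): one needs a single finite branched cover that is both equivariant for $\psi$ and genuinely ramified over the $1$-pronged marked points, and the characteristic-core construction of Step 3 reconciles these, the point being that passing to the core does not destroy the ramification since $N\subseteq\ker\varphi$ and $\varphi(\gamma_i)\ne 1$. The remaining checks — the prong bookkeeping of Step 2, the extension of the isotopy $\psi\simeq h$ to $\bar\psi\simeq\hat h$ and its lift (using $\pi_2(S)=0$ and $\hat h=\Id$ near $\partial S_1$), and connectedness/manifold-ness of $\hat S$ — are routine.
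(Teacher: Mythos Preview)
The paper does not give its own proof of this theorem. It is stated in Section~\ref{proprietes_pA} as one of several background facts about pseudo-Anosov automorphisms (together with Theorems~\ref{theoreme_croissance_pA}, \ref{theoreme_Nielsen_pA} and \ref{theoreme_invariance_Nielsen}), all quoted without proof; the reader is referred to \cite{CassonBleiler88} and \cite{FLP91} for the underlying theory. So there is no argument in the paper to compare your proposal against.

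That said, your proposal is the standard construction and is correct in outline. The three steps --- cap off to get a pseudo-Anosov relative to a finite set of marked points, with possible $1$-prong singularities at those points; take a branched cover ramified over the $1$-prong points to turn them into admissible singularities; and produce such a cover via the characteristic core of a finite-index subgroup separating the peripheral loops from the identity --- are exactly what one finds in the literature (see e.g.\ the exposition around singular foliations and branched covers in \cite{FLP91}). Two small points worth tightening: first, the homotopy $\bar\psi\simeq\hat h$ and its lift are most cleanly obtained by lifting the homotopy $\psi\simeq h$ on $S_1$ to the unbranched cover $\hat S_1$ and then arguing that two self-homeomorphisms of the closed surface $\hat S$ which agree up to homotopy on $\hat S_1$ (whose inclusion is $\pi_1$-surjective) are themselves homotopic; this avoids invoking a homotopy lifting property for branched covers. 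Second, the assertion that each peripheral loop $\gamma_i$ is nontrivial in $\pi_1(S_1)$ uses that $S_1$ supports a pseudo-Anosov and hence has negative Euler characteristic, which you should make explicit.
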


\subsection{Contact forms on $M$}\label{section_equation_pA}

\subsubsection{In the hyperbolic component} 
Let $M_0$ be the hyperbolic component, then $M_0$ is the mapping torus $(S\times \mathbb R)/h$ where
\begin{enumerate}
  \item $S$ is a compact oriented surface with boundary,
  \item $h : S\to S$ is a diffeomorphism homotopic to a pseudo-Anosov map,
  \item $h=\Id$ in a neighborhood of $\partial S$.
\end{enumerate}
We use the usual construction on a contact structure on a mapping torus. Choose cylindrical coordinates $(r,\theta)$ in a neighborhood of $\partial S$ so that $\frac{\partial}{\partial \theta}$ is positively tangent to $\partial S$. Let $\beta$ be a $1$-form on $S$ such that $\d \beta>0$ and, near $\partial S$, $\beta=b(r)\d \theta$ with $b>0$ and $b'>0$. Let $F:[0,1]\to [0,1]$ be a smooth non-decreasing function such that $F=0$ near $0$ and $F=1$ near $1$. On $S\times[0,1]$ consider the contact form 
\[\alpha=(1-F(t))\beta+F(t)h^*\beta+\d t \] 
where $t$ is the coordinate on $[0,1]$. This contact form induces a contact form on $M_0$. The associated contact structure is universally tight.
\begin{lemma} 
The Reeb vector field is positively transverse to $S\times\{*\}$ and the first return map on $S\times\{0\}$ is homotopic to $h$.
\end{lemma}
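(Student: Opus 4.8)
The plan is to work directly with the contact form $\alpha=(1-F(t))\beta+F(t)h^*\beta+\d t$ on $S\times[0,1]$ and compute $R_\alpha$ by solving $\iota_{R_\alpha}\alpha=1$ and $\iota_{R_\alpha}\d\alpha=0$. Write $\beta_t=(1-F(t))\beta+F(t)h^*\beta$, so $\alpha=\beta_t+\d t$ and $\d\alpha=\d_S\beta_t+F'(t)\,\d t\wedge(h^*\beta-\beta)$, where $\d_S$ is the exterior derivative in the $S$-directions. First I would check that the contact condition $\alpha\wedge\d\alpha>0$ forces $\d_S\beta_t>0$ for all $t$ (a convex-combination argument using $\d\beta>0$ and $\d(h^*\beta)=h^*\d\beta>0$), which is exactly the standard suspension computation. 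Then I would decompose $R_\alpha=a(x,t)\frac{\partial}{\partial t}+Y$ with $Y$ tangent to the $S$-factor. The normalization $\iota_{R_\alpha}\alpha=1$ gives $a+\beta_t(Y)=1$, and the condition $\iota_{R_\alpha}\d\alpha=0$, paired against $\frac{\partial}{\partial t}$ and against $S$-vectors, pins down that $a>0$: indeed, pairing $\iota_{R_\alpha}\d\alpha=0$ with $\frac{\partial}{\partial t}$ shows $\iota_Y\d_S\beta_t = a\,F'(t)(h^*\beta-\beta)$ on $S$-vectors, and evaluating the remaining relation forces $a$ to be a strictly positive function (nonvanishing since $R_\alpha$ can never be tangent to $S\times\{*\}$: if $a=0$ somewhere then $\iota_Y\d_S\beta_t=0$ with $\d_S\beta_t$ nondegenerate forces $Y=0$, hence $R_\alpha=0$, impossible). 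Continuity of $a$ on the connected slice then gives $a>0$ everywhere, i.e. $R_\alpha$ is positively transverse to each $S\times\{*\}$. This also makes sense globally on $M_0=(S\times[0,1])/{\sim}$ since $\alpha$ is defined so as to descend, and near $\partial S$ the form is $b(r)\d\theta+\d t$ (as $h=\Id$ there), whose Reeb field is $\frac{\partial}{\partial t}$ up to positive scaling, consistent with transversality.

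For the first return map, I would argue as follows. Since $R_\alpha$ is positively transverse to $S\times\{0\}$ and to every slice $S\times\{t\}$, the flow of $R_\alpha$ starting on $S\times\{0\}$ sweeps monotonically through the slices, and (after rescaling $R_\alpha$ by the positive function $1/a$, which does not change the flow lines up to reparametrization) the first return map $\phi$ to $S\times\{0\}$ is well-defined and is obtained by flowing the $S$-component field $Y/a$ from $t=0$ to $t=1$ and then applying the gluing $(x,1)\sim(h(x),0)$. The point is that this first-return diffeomorphism of $S$ is \emph{isotopic} to $h$: the isotopy is produced by the family of partial return maps $\phi_s$ defined by flowing from $S\times\{0\}$ to $S\times\{s\}$ (for $s\in[0,1]$) and composing with the identification with $S\times\{0\}$; at $s=0$ this is $\Id$ and at $s=1$ composing with the gluing gives $\phi$, while the whole family lands in $S$ because near $\partial S$ the flow is vertical. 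Concretely, I'd identify $\phi$ as $h\circ g$ where $g$ is the time-$1$ map of the (time-dependent, compactly supported away from $\partial S$) vector field $Y/a$ on $S$; since $g$ is isotopic to $\Id$ through $g_s$, $\phi=h\circ g$ is homotopic to $h$.

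The main obstacle I anticipate is the bookkeeping around the gluing identification and making the "partial return map" well-defined and smooth: one must be careful that the vector field $Y/a$ governing the $S$-motion is genuinely tangent to $S$ (it is, by the decomposition) and that its flow stays in $S$ (true since it vanishes near $\partial S$ because there $\alpha=b(r)\d\theta+\d t$ forces $Y=0$), and that the reparametrization by $1/a$ does not affect which trajectories return where. None of this is deep, but it is the place where a careless argument could go wrong; everything else is the routine suspension computation. I would also remark that positivity of transversality to the fibers $S\times\{*\}$ is precisely what is needed later for the growth-rate estimates, so the lemma is stated with that emphasis. Finally, the universal tightness claim follows from the standard fact that suspension contact structures of this form lift to a tight (indeed, the universal cover of $S\times S^1$ carries the pullback which is tight by Giroux–Honda type arguments), so I would simply cite the usual construction as stated.
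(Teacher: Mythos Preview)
Your proposal is correct and follows essentially the same approach as the paper: both argue that if the $\partial_t$-component of $R_\alpha$ vanished then $\iota_Y\big((1-F(t))\d\beta+F(t)h^*\d\beta\big)=0$ would force $Y=0$ (since this convex combination is a positive area form on $S$), contradicting $R_\alpha\neq 0$, and both fix the sign via the boundary where $R_\alpha=\partial_t$. For the first-return statement the paper is slightly slicker---it simply observes that $R_\alpha$ and $\partial_t$ are homotopic through vector fields transverse to the slices (e.g.\ by linear interpolation, since both have positive $\partial_t$-component), so their first-return maps are homotopic and that of $\partial_t$ is $h$---whereas you build the isotopy explicitly via partial flows of $Y/a$; these are the same idea, and your remark on universal tightness is extraneous to this lemma.
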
 

\begin{proof} 
If the Reeb vector field is tangent to $S\times\{t\}$ in $(p,t)$ then
\[\iota_{R_\alpha}((1-F(t))\d\beta+F(t)h^*\d\beta)(p,t)=0\] 
as \[\d\alpha=(1-F(t))\d\beta+F(t)h^*\d\beta+F'(t)\d t\wedge(h^*\beta-\beta).\]
Yet $\d\beta$ and $h^*\d\beta$ are two positive volume forms. Hence $R_\alpha$ is transverse to $S\times\{t\}$. It is positively transverse by the boundary condition. The first return map is well defined and homotopic to $h$ as $h$ is the first return map of $\frac{\partial}{\partial t}$ on $S\times\{0\}$ and $R_\alpha$ and $\frac{\partial}{\partial t}$ are homotopic in the space of vector fields transverse to $S\times\{*\}$. 
\end{proof} 

In $M_0$, periodic Reeb orbits correspond to periodic points of the first return map on $S\times\{0\}$. Without loss of generality, we may assume that all the periodic points of the first return map in the interior of $S$ are non-degenerate.  

\subsubsection{In non-hyperbolic components}
We use the following theorem of Colin and Honda.

\begin{theorem}[Colin-Honda, {\cite[Théorème 1.3]{ColinHonda05}}] \label{theoreme_non_hyperbolique}
Let $M$ be a compact, oriented, irreducible $3$-manifold with non-empty boundary such that $\partial M$ is a union of tori. Then there exist an hypertight contact form $\alpha$ on $M$ and a neighborhood  $T\times I$ of each boundary component ($I$ is an interval) with coordinates $(x,y,z)$ such that $\alpha=\cos(z)\d x-\sin(z)\d y$. In addition there exist arbitrarily small non-degenerate hypertight perturbations of~$\alpha$. 
\end{theorem}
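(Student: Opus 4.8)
The plan is to construct $\alpha$ piece by piece along the JSJ decomposition of $M$ and to glue the pieces through collars in which $\alpha$ has the prescribed normal form. First I would dispose of the exceptional case: if some component of $\partial M$ is compressible then, $M$ being irreducible, $M=S^1\times D^2$, and one writes down an explicit hypertight form directly, taking $x$ to be the longitude and letting the $I$-coordinate $z$ vary in a small interval around $0$ so that the linear Reeb foliations on the collar tori stay longitudinal, never meridional, hence never contractible. Otherwise every boundary torus is incompressible, $M$ is Haken, and I would cut $M$ along a maximal family of incompressible tori into Seifert fibered and atoroidal components $M_1,\dots,M_p$, each with non-empty toroidal boundary.

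On a Seifert component I would take a contact form whose Reeb field is a small perturbation of the vertical (fiber) direction, so that every periodic orbit is freely homotopic to a power of a (possibly exceptional) fiber times a loop of the base orbifold; since the base has non-empty boundary or negative orbifold Euler characteristic the fiber is non-torsion and no such orbit bounds, so the form is hypertight, and near $\partial M_i$ it has the form $b(r)\,\d\theta+\d t$. On a hyperbolic component that fibers over $S^1$ I would reuse the suspension construction of Section~\ref{section_equation_pA}: the form $\alpha=(1-F(t))\beta+F(t)h^*\beta+\d t$ has Reeb field positively transverse to the fiber surface, its periodic orbits correspond to periodic points of a return map homotopic to a pseudo-Anosov $\psi$, and by Theorem~\ref{theoreme_Nielsen_pA} such an orbit is the suspension of a single periodic point, hence non-contractible; near $\partial M_i$ it is again $b(r)\,\d\theta+\d t$. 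For a general atoroidal component there is no fibration, and here the genuine content of Colin--Honda \cite{ColinHonda05} enters: one replaces the fibration by a taut foliation of $M_i$ transverse to $\partial M_i$ (Gabai), or by a branched-surface carrier, and builds a Reeb field transverse to it with holonomy controlled so that every closed orbit is carried by an essential loop. Producing this controlled Reeb field with no contractible periodic orbit, \emph{and} with the prescribed collar normal form, is the main obstacle of the proof; everything else is either explicit or a gluing bookkeeping.

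Finally I would glue. In a collar $T^2\times[0,1]$ of each JSJ torus one interpolates between the two incoming boundary forms through a sufficient amount of torsion, i.e.\ through the $S^1$-invariant models $\cos(kz+c)\,\d x-\sin(kz+c)\,\d y$; each such model has only essential Reeb orbits — they lie on the rational-slope tori and are essential in $T^2$, hence in $M$ by incompressibility — so after a last coordinate normalisation the resulting global $\alpha$ is hypertight and equals $\cos z\,\d x-\sin z\,\d y$ on a collar of $\partial M$. For the non-degenerate perturbation, the degenerate orbits of $\alpha$ sit in Morse--Bott families: the rational-slope tori in the $\sin/\cos$ regions and, on Seifert pieces, the families of vertical fibers. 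A standard Morse--Bott perturbation $\alpha\rightsquigarrow(1+\lambda\overline f)\alpha$, with $\overline f$ a sum of Morse functions on the quotient circles and surfaces and supported away from a neighborhood of $\partial M$, makes all orbits of bounded period non-degenerate while leaving the collar form untouched; since for $\lambda$ small every new orbit stays $C^0$-close to an old essential orbit, hypertightness is preserved, and letting $\lambda\to0$ yields arbitrarily small non-degenerate hypertight perturbations.
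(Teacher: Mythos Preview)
The paper does not prove this statement at all: Theorem~\ref{theoreme_non_hyperbolique} is quoted as a black box from Colin--Honda \cite{ColinHonda05}, with only the remark that ``the construction in \cite{ColinHonda05} gives the same contact structures as \cite{HKM00} and \cite{Colin01}''. So there is no proof in the paper to compare your proposal against.

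As for your sketch itself, it is a plausible outline of the Colin--Honda strategy, and you are right that the substantive difficulty lies in the atoroidal, non-fibered pieces; everything else (Seifert pieces, fibered hyperbolic pieces, the $\sin/\cos$ interpolation on collars, and the Morse--Bott perturbation) is essentially what you describe. But precisely because you defer that hard step back to \cite{ColinHonda05}, your proposal is a summary rather than an independent proof, and a reader would need the actual controlled-Reeb-field construction (via taut foliations or branched surfaces) to make it self-contained. Two small points to watch if you ever flesh this out: the claim that the regular fiber of a Seifert piece is non-torsion needs the hypothesis that the piece is not a solid torus or a twisted $I$-bundle over the Klein bottle, and the assertion that a small Morse--Bott perturbation preserves hypertightness requires an argument ruling out long contractible orbits, not just orbits of bounded period.
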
 
The construction in \cite{ColinHonda05} gives the same contact structures as \cite{HKM00} and \cite{Colin01}. 
Without loss of generality, all the periodic orbits whose free homotopy classes do not correspond to a class in the boundary are non-degenerate.

\subsubsection{Interpolation and torsion} 
In the previous sections we constructed an hypertight contact form $\alpha$ on each connected component of $M\setminus \bigcup_{k=1}^N \nu(T_k)$ where $\nu(T_k)$ is a neighborhood of $T_k$. We now glue these components together. Choose $k\in1\dots N$. There exist coordinates $(x,y,z)$ in a neighborhood $T_k\times [a,b]$ of $T_k$ such that in a neighborhood of $T\times\{a\}$ the contact form is written $f_a(x)\d y+g_a(x)\d z $ and in a neighborhood of $T\times\{b\}$ the contact form is written $f_b(x)\d y+g_b(x)\d z $.
\begin{lemma}
For all $n\in \bb N^*$,
there exist $f_n:[a,b]\longrightarrow\bb R$ and $g_n:[a,b]\longrightarrow\bb R$ two smooth functions such that
\begin{enumerate}
  \item $f_n$ extends $f_a$ and $f_b$ and $g_n$ extends $g_a$ and $g_b$;
  \item\label{cond_alpha} $\alpha=f_n(x)\d y+g_n(x)\d z$ is a contact form;
  \item in coordinates\label{cond_R_alpha} $(\theta,z)$, the Reeb vector field $R_\alpha$ sweeps out an angle in \[\left(2n\pi-\frac{\pi}{2},2n\pi+\frac{3\pi}{2}\right].\]
\end{enumerate}
\end{lemma}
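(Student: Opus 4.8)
The statement is an interpolation-and-twisting lemma: on a collar $T_k\times[a,b]$ with prescribed boundary $1$-forms $f_a\,\d y + g_a\,\d z$ and $f_b\,\d y + g_b\,\d z$, I want to produce $f_n,g_n$ extending the boundary data so that $\alpha = f_n\,\d y + g_n\,\d z$ stays contact and the Reeb direction, viewed as a curve in the $(\theta,z)$-plane (i.e. the curve $x\mapsto (f_n(x),g_n(x))$ read through its argument), sweeps through roughly $2n$ full turns, landing in the window $\left]2n\pi-\tfrac\pi2,\,2n\pi+\tfrac{3\pi}{2}\right]$. The guiding model is the classical fact, already invoked for $\sin(nx)\,\d y+\cos(nx)\,\d z$ on $T^3$, that the contact condition $f g' - f' g \neq 0$ (suitably signed) is exactly the statement that the planar curve $(f,g)$ is an immersion winding monotonically around the origin, and that Giroux/Gray-type normalisations let one prescribe the total rotation freely. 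So the contact condition translates into a monotone-rotation condition on a curve in $\mathbb R^2\setminus\{0\}$.

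First I would pass to polar coordinates: write $f_n(x) + i g_n(x) = \rho_n(x)\exp(i\,\vartheta_n(x))$, just as in the proof of Proposition~\ref{formeparticuliere}. The contact condition (\ref{cond_alpha}) becomes $\rho_n^2\,\vartheta_n' < 0$ (or $>0$; fix the sign from the orientation convention, matching the one forced by the boundary forms), i.e.\ $\vartheta_n$ is strictly monotone; and condition (\ref{cond_R_alpha}) becomes a prescription on the total variation $\vartheta_n(b)-\vartheta_n(a)$, which should equal $2n\pi$ plus the fixed phase offset determined by the boundary data $\arg(f_a+ig_a)$ at $x=a$ and $\arg(f_b+ig_b)$ at $x=b$. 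So I would: (1) read off from $f_a,g_a,f_b,g_b$ the boundary values $\rho(a),\vartheta(a),\rho(b),\vartheta(b)$ together with their first derivatives near the endpoints (the collar is chosen so the forms are $x$-dependent only there); (2) note that monotonicity of $\arg$ near each endpoint is already guaranteed because the boundary forms are themselves contact; (3) choose a smooth strictly decreasing (say) function $\vartheta_n:[a,b]\to\mathbb R$ that agrees with the given $\arg$ to first order at both endpoints and whose total decrease is exactly $2n\pi$ plus the offset — this is possible for every $n\ge 1$ because we are free to let $\vartheta_n$ spiral as much as we like in the interior, the only constraints being the $C^1$ endpoint data and strict monotonicity, which are mutually compatible; (4) choose $\rho_n>0$ smoothly interpolating $\rho(a),\rho(b)$ with matching first-order data, e.g.\ a convex-combination/cutoff construction as in Proposition~\ref{formeparticuliere}; (5) set $f_n = \rho_n\cos\vartheta_n$, $g_n = \rho_n\sin\vartheta_n$ and check that $f_n g_n' - f_n' g_n = \rho_n^2\vartheta_n' \ne 0$, so $\alpha$ is contact, and that the Reeb vector field $R_\alpha \propto (0,\,-g_n',\,f_n')$ (as recorded in the Remark after Proposition~\ref{formeparticuliere}) has direction rotating monotonically through the required angular window — the angle swept by the Reeb field differs from that swept by $(f_n,g_n)$ by a constant shift (it is the perpendicular $(-g_n',f_n')$ direction), so a total rotation of $2n\pi$ for $(f_n,g_n)$ gives the stated window once the constant $\tfrac\pi2$-type shift and the half-open endpoint conventions are accounted for.

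**The main obstacle.** The genuinely delicate point is the \emph{compatibility of the endpoint data with strict monotonicity across the whole interval}, i.e.\ making sure that the prescribed $C^1$ (or $C^\infty$) boundary jets of $\arg(f+ig)$ at $a$ and $b$ — which are dictated by the already-constructed contact forms on the adjacent pieces and are not under our control — can be joined by a strictly monotone smooth function with the exact prescribed total variation $2n\pi + (\text{offset})$. Since strict monotonicity of $\arg$ holds near both endpoints (contact condition there) and in the interior we have complete freedom, this is a soft $C^\infty$ interpolation argument (glue the two endpoint germs to a monotone interior function using cutoffs, then adjust the interior to absorb the right number of extra turns), but one must be careful that the offset and sign conventions are consistent so that $n\ge 1$ always suffices and no lower bound on $n$ is needed — and that the half-open interval $\left]2n\pi-\tfrac\pi2,2n\pi+\tfrac{3\pi}{2}\right]$ in condition~(\ref{cond_R_alpha}) is exactly hit, which is a matter of choosing the interior profile so the Reeb angle \emph{ends} at the right value rather than merely passing through the window. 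A secondary, purely bookkeeping obstacle is translating cleanly between three angular quantities: $\arg(f_n+ig_n)$, the Reeb direction $\arg(-g_n'+if_n')$, and the ``angle in $(\theta,z)$ coordinates'' as the lemma phrases it; fixing one normalisation at the start and using it consistently dispatches this.
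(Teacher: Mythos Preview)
Your approach and the paper's coincide in spirit: both translate conditions (\ref{cond_alpha}) and (\ref{cond_R_alpha}) into conditions on the planar curve $x\mapsto (f_n(x),g_n(x))$ and then assert that such a curve exists. The paper is far terser: it records that $f_n'g_n - f_ng_n'>0$ means the curve winds clockwise about the origin, that (\ref{cond_R_alpha}) prescribes the total rotation of the normal $(-g_n',f_n')$, and then simply writes ``we choose a parametric curve in $\mathbb R^2$ extending $(f_a,g_a)$ and $(f_b,g_b)$ with these properties'', without passing to polar coordinates at all.

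There is, however, a concrete error in your argument, precisely at the step you treat as routine bookkeeping. You write that the Reeb angle ``differs from that swept by $(f_n,g_n)$ by a constant shift'' because $(-g_n',f_n')$ is ``the perpendicular direction''. But $(-g_n',f_n')$ is perpendicular to the \emph{tangent} $(f_n',g_n')$, not to the position vector $(f_n,g_n)$; so the Reeb angle is the tangent angle plus $\tfrac\pi2$, and the tangent angle is \emph{not} $\vartheta_n(x)+\text{const}$. In your polar coordinates one has $(f_n',g_n') = \rho_n'\,e_r + \rho_n\vartheta_n'\,e_\vartheta$, hence the tangent angle is $\vartheta_n(x)+\psi(x)$ with $\psi(x)=\arg\bigl(\rho_n'(x)+i\,\rho_n(x)\vartheta_n'(x)\bigr)$, which genuinely varies with $x$. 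The construction is salvageable: since $\rho_n\vartheta_n'<0$ throughout, $\psi$ takes values in $(-\pi,0)$ and therefore does not wind; the endpoint values $\psi(a),\psi(b)$ are fixed by the boundary data, so the total Reeb rotation equals $\Delta\vartheta_n + \bigl(\psi(b)-\psi(a)\bigr)$ and you can still land in the prescribed $2\pi$-window by adjusting the number of full turns in $\vartheta_n$. Your ``main obstacle'' (endpoint $C^\infty$-matching with strict monotonicity) is genuinely soft; the actual delicate point was this angle identification, which the paper sidesteps by never decoupling $\rho$ from $\vartheta$ in the first place.
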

\begin{proof}
The contact condition is $f_n'g_n-f_ng_n'>0$ and the Reeb vector field is
\[R_\alpha=\frac{1}{f_n'g_n-f_ng_n'}\left(
\begin{array}{l}
0\\
-g_n'\\
f_n'
\end{array}\right).\]
The conditions \eqref{cond_alpha} and \eqref{cond_R_alpha} are equivalent to ``the parametrized curve $(f_n,g_n)$ in $\mathbb R^2$ turns clockwise and its normal vector sweeps out an angle in $(2n\pi-\frac{\pi}{2},2n\pi+\frac{3\pi}{2}]$''. We choose a parametric curve in $\mathbb R^2$ extending $(f_a,g_a)$ and $(f_b,g_b)$ with these properties.
\end{proof}
For all $n\in \bb N^*$, construct a contact form $\alpha_n$ on $M$ by extending $\alpha$ by $\alpha_n=f_n(x)\d y+g_n(x)\d z$ in a neighborhood of $T_1$ and by $\alpha_n=f_1(x)\d y+g_1(x)\d z$ in a neighborhood of $T_2,\dots, T_N$.

\begin{remark}
If $\{b\}\times T$ is in $\partial M_0$, then $f_b<0$, $f_b'>0$ and $g_b=1$ near $b$. If $\{a\}\times T$ is in $\partial M_0$, then $f_a>0$, $f_a'>0$ and $g_a=1$ near $a$ (changes in signs are due to the orientation convention of the boundary).
\end{remark}
By \cite[Théorème 4.2]{Colin99}, as contact structures $\xi_n=\ker(\alpha_n)$ are universally tight on each components, $(M,\xi_n)$ is universally tight for all $n\in\bb N^*$. In addition, as our construction corresponds to the construction in \cite[Section 4]{Colin01}, by Theorem \cite[ Theorem 4.5]{Colin01}, there exist infinitely many non-isomorphic $\xi_n$.

\subsection{Growth rate of contact homology}\label{section_HC_pA}

\begin{lemma}\label{cylindres_hyperboliques}
For all almost complex structures on $\mathbb R\times M $ adapted to the contact form constructed above, there is no holomorphic cylinder $u:\mathbb R\times S^1\to\mathbb R\times M$ asymptotic to two periodic Reeb orbits contained in different connected components of $M\setminus (\bigcup_{k=1}^N T_k\times[a,b])$.
\end{lemma}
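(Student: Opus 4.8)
The statement is a separation result: no finite-energy holomorphic cylinder can connect Reeb orbits living in two different pieces of the JSJ-type decomposition. The strategy mirrors Lemmas \ref{controle_cylindre_cloisonne_I} and \ref{controle_cylindre_cloisonne_II} from Section \ref{section_polynomial}: use positivity of intersection against the tori foliated by Reeb orbits that sit inside the interpolation regions $T_k\times[a,b]$. First I would fix a cylinder $u=(a_u,f_u):\mathbb R\times S^1\to\mathbb R\times M$ asymptotic to $\gamma_-$ (negatively) and $\gamma_+$ (positively) lying in distinct components $M_i$, $M_j$ of $M\setminus\bigcup_k T_k\times[a,b]$. Since $u$ is continuous and $\mathbb R\times S^1$ is connected, $f_u(\mathbb R\times S^1)$ must meet at least one interpolation collar $T_{k_0}\times[a,b]$ and, because the two ends escape on opposite sides, the image must cross the whole collar: there is a nonempty open subinterval of $[a,b]$ all of whose level tori $T_{x}=\{x\}\times T^2$ are hit by $f_u$.

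**Key steps.** (1) In that interpolation region, by construction in Section \ref{section_equation_pA} the contact form is $\alpha_n = f_n(x)\,\d y + g_n(x)\,\d z$ and — crucially — the Reeb vector field sweeps out a large angle (in $\left]2n\pi-\tfrac{\pi}{2},2n\pi+\tfrac{3\pi}{2}\right]$), so along the collar one passes through infinitely many (or at least many) level tori $T_x$ that are \emph{foliated by Reeb periodic orbits} (those $x$ where $f_n'/g_n'$ is rational). Pick such an $x_0$ in the open subinterval found in the previous paragraph, with the level torus incompressible (the $T_k$ are incompressible by hypothesis, hence so are the parallel tori $T_x$). (2) Apply Lemma \ref{lemme_u_-1}: for a suitable $x_0$, $u^{-1}(u(\mathbb R\times S^1)\cap(\mathbb R\times T_{x_0}))$ is a disjoint union of smooth circles, each non-contractible and homotopic to $\{*\}\times S^1$ in the cylinder $\mathbb R\times S^1$ — but a non-contractible simple closed curve in a cylinder separates the two ends. (3) Cut $\mathbb R\times S^1$ along one such circle $C$, take the half asymptotic to $-\infty$, equip $C$ with a collar $A=\overline{A_+}\cup\overline{A_-}$ as in Lemma \ref{lemme_cylindres_feuilletes}, and read off from that lemma which side of $T_{x_0}$ the images $f_u(A_-)$ and $f_u(A_+)$ land in — this is dictated purely by whether $(p,R)$ is a direct basis, where $p$ is tangent to the Reeb direction on $T_{x_0}$. (4) Compare with the constraint imposed by the asymptotics: since $u$ limits to $\gamma_-\subset M_i$ as $s\to-\infty$, the end $A_-$ of the cylinder must eventually leave the collar on the $M_i$ side, forcing $f_u(A_-)$ into the $x$-interval pointing toward $M_i$. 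If the intersection-positivity side and the asymptotics side disagree we get a contradiction directly; if by chance they agree at the first crossing torus, one uses that the two asymptotic ends lie on \emph{opposite} sides of the collar (one in $M_i$, one in $M_j$), so $u$ must cross $T_{x_0}$ with both orientations — contradicting Lemma \ref{lemme_cylindres_feuilletes}, which says holomorphic cylinders cross a torus foliated by Reeb orbits in just one direction.

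**Main obstacle.** The delicate point is step (2)–(3): ensuring the genericity hypotheses of Lemmas \ref{lemme_u_-1} and \ref{lemme_cylindres_feuilletes} hold, i.e.\ that one can choose $x_0$ inside the crossing subinterval with $T_{x_0}$ foliated by Reeb orbits \emph{and} such that $f_u$ has no critical points and no points where $\partial/\partial\tau\in\im(\d f_u)$ over $T_{x_0}$. The first requires knowing the set of rationally-dependent slopes $\{x : f_n'(x)/g_n'(x)\in\mathbb Q\}$ is dense in the crossing interval — which follows from the Reeb field sweeping a full angular interval, so every rational slope is realized — and the second is exactly the isolated-points statement of Theorem \ref{theoreme_zeros_isoles} and Proposition \ref{proposition_tau_im_du}, combined with the finite-energy compactness that confines $u^{-1}$ of a thin slab to a compact set (as in the proof of Lemma \ref{lemme_u_-1}). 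Once those are in place, the orientation bookkeeping of Lemma \ref{lemme_cylindres_feuilletes} closes the argument. I would also remark that the special boundary behavior near $\partial M_0$ recorded in the Remark of Section \ref{section_equation_pA} (the signs of $f_a,f_b,g_a,g_b$) is what guarantees the interpolation collar genuinely contains foliated tori adjacent to the hyperbolic piece, so the argument applies in particular to separating the hyperbolic component.
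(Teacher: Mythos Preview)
Your overall strategy is correct and steps (1)--(3) are fine, but step (4) has a genuine gap. In the fallback case where the crossing direction dictated by Lemma~\ref{lemme_cylindres_feuilletes} happens to agree with the asymptotics, your claim that ``$u$ must cross $T_{x_0}$ with both orientations'' is unjustified: if at every circle $C\subset u^{-1}(\mathbb R\times T_{x_0})$ the $A_-$ side maps toward $M_i$ and the $A_+$ side toward $M_j$, then a single crossing is perfectly consistent with $\gamma_-\in M_i$ and $\gamma_+\in M_j$. Nothing forces a second crossing in the opposite direction, so with a single torus at a favorable slope you obtain no contradiction.

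The paper closes this gap by exploiting the feature of the collar that you noted but used only for density of rational slopes: the Reeb direction rotates through an angle greater than $\pi$ (indeed through more than $2\pi$, by the torsion inserted in Section~\ref{section_equation_pA}). One can therefore choose \emph{two} level tori $T_{x_0}$ and $T_{x_1}$ with $R_{\alpha_n}(x_0)=-R_{\alpha_n}(x_1)$, both generic in the sense of Theorem~\ref{theoreme_zeros_isoles} and Proposition~\ref{proposition_tau_im_du}. Cutting $\mathbb R\times S^1$ along the preimages of both tori and selecting a component $\Sigma$ whose image meets $\{x\}\times T$ for every $x\in[x_0,x_1]$, the boundary $\partial f(\Sigma)$ is a pair of homotopic circles, one in each $T_{x_i}$. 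Positivity of intersection forces the Reeb field to be positively transverse to both --- impossible since the Reeb directions at $x_0$ and $x_1$ are opposite. Equivalently, you could salvage your single-torus argument by observing that as $x$ varies across the collar the basis $(p,R(x))$ switches from direct to indirect, so you may always \emph{choose} $x_0$ to land in the ``disagreement'' case and drop the fallback entirely.
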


\begin{proof}
In $T_k\times[a,b]$, the contact form is written $\alpha_n=f(x)\d y+g(x)\d z$ and the Reeb vector field is
\[R_{\alpha_n}=\frac{1}{f'g-fg'}\left(
\begin{array}{l}
0\\
-g'\\
f'
\end{array}\right).\]
It depends only on the $x$ variable and we denote it by $R_{\alpha_n}(x)$.
We prove this result by contradiction. If such a $u$ exists then there exists $k\in 1\dots N$ such that $u_M(\mathbb R\times S^1)\cap T_k\times\{x\}\neq\emptyset$ for all $x\in [a,b]$.
By Theorem \ref{theoreme_zeros_isoles} and Proposition \ref{proposition_tau_im_du}, there exist $x_0$ and $x_1$ in $[a,b]$ such that
\begin{enumerate}
  \item $R_{\alpha_n}(x_0)=-R_{\alpha_n}(x_1)$,
  \item for all $(s,t)\in C=u^{-1}\left(u(\mathbb R\times S^1)\cap(\mathbb R\times \{x_0,x_1\}\times T)\right)$, we have $\d u(s,t)\neq 0$ and $\frac{\partial}{\partial \tau}\notin \im(\d u(s,t))$.
\end{enumerate}
By Lemma \ref{lemme_cylindres_feuilletes}, $C$ is a finite union of smooth circles homotopic to $\{*\}\times S^1$. Cut $\mathbb R\times S^1 $ along these circles and choose a connected component $\Sigma$ such that $u_M(\Sigma)\cap\{x\}\times T\neq \emptyset$ for all $x\in[x_0,x_1]$. Then $\partial u_M(\Sigma)$ is a union of two homotopic circles: one in $\{x_0\}\times T$ and one in $\{x_1\}\times T$. By positivity of intersection the Reeb vector field is positively transverse to these circles in $\{x_0\}\times T$ and $\{x_1\}\times T$. This leads to a contradiction as $R_{\alpha_n}(x_0)=-R_{\alpha_n}(x_1)$.
\end{proof}

Let $\Lambda_0$ be the set of primitive free homotopy classes that correspond to periodic orbits in $M_0$ and do not represent a homotopy class in a torus $T_k$ for $k=1,\dots, N$. All the periodic Reeb orbits with homotopy class in $\Lambda_0$ are non-degenerate. As there are no contractible periodic orbits, the associated partial contact homology is well defined.

There exists $C>0$ such that all periodic orbits in $M_0$ associated to a $k$-periodic point of the first return map $h_1$ have a period smaller than $kC$.

\begin{lemma}\label{lemma_lambda_0}
For all $\eta\in \Lambda_0$,  $\dim(HC^{[\eta]}_*(M,\alpha_n))\geq 1$. In addition, if $\eta$ is associated to $k$-periodic points, for all $L>kC$ the map
$HC^{[\eta]}_{\leq L}(M,\alpha_n)\to HC^{[\eta]}_*(M,\alpha_n)$ has a rank greater than $1$.  
\end{lemma}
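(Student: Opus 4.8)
The plan is to produce a nonzero lower bound for $\dim HC^a_*$ via an Euler-characteristic computation that is insensitive to the homotopy between the first return map $h_1$ and a pseudo-Anosov. First I would pin down the generators of $C^a_*(M,\alpha_n)$. By the construction of $\alpha_n$, the Reeb periodic orbits of $\alpha_n$ freely homotopic to $a$ correspond to periodic points of $h_1$ on $S\times\{0\}\subset M_0$. Since $a\in\Lambda_0$ is primitive and, by definition of $\Lambda_0$, not conjugate into any peripheral subgroup $\pi_1(T_k)$, the graph-of-groups structure attached to the torus decomposition confines every loop in class $a$ to $M_0$; hence no Reeb orbit living in a collar $T_k\times[a,b]$, in a torsion region, or in another JSJ piece can represent $a$, and such orbits moreover lie in the interior of $S$, where the standing non-degeneracy assumption on the periodic points of $h_1$ applies. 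Using the characterization of Nielsen classes recalled above (two periodic points are Nielsen equivalent if and only if the induced orbits in the suspension are freely homotopic), the orbits representing $a$ form exactly the set of periodic points of $h_1$ in one Nielsen class $n$. As $a$ is primitive these orbits are simple, hence good, so they are precisely the generators of $C^a_*(M,\alpha_n)$; by Dragnev's theorem for primitive classes (and Lemma~\ref{cylindres_hyperboliques}, which keeps the relevant holomorphic cylinders inside $\mathbb R\times M_0$) the differential squares to zero and $HC^a_*(M,\alpha_n)$ is well defined.

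Next I would compute the Euler characteristic. As the complex is finite dimensional, $\chi\big(HC^a_*(M,\alpha_n)\big)=\chi\big(C^a_*(M,\alpha_n)\big)=\sum_{\gamma}(-1)^{\mu_{\mathrm{CZ}}(\gamma)}$, the sum being over the orbits representing $a$. For the orbit $\gamma$ attached to a $k$-periodic point $x$ of $h_1$, the $k$-th return map of the Reeb flow is symplectic on $\xi$, so $\d h_1^k(x)$ has eigenvalues $\mu,\mu^{-1}$, and a short case check (real positive, real negative, complex conjugate eigenvalues) shows that the parity of $\mu_{\mathrm{CZ}}(\gamma)$ is governed by $\mathrm{sign}\,\det\big(\d h_1^k(x)-\Id\big)=\epsilon_{h_1^k}(x)$; summing over the class, $\chi\big(C^a_*(M,\alpha_n)\big)=\pm\Lambda_{h_1^k}(n)$. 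By Theorem~\ref{theoreme_invariance_Nielsen} applied to the homotopy from $h_1$ to a pseudo-Anosov $\psi$, $\Lambda_{h_1^k}(n)=\Lambda_{\psi^k}(n')$ for the corresponding Nielsen class $n'$ of $\psi$, and by Theorem~\ref{theoreme_Nielsen_pA} the class $n'$ consists of a single periodic point of $\psi$, so $\Lambda_{\psi^k}(n')=\pm1\neq0$. Hence $\chi\big(HC^a_*(M,\alpha_n)\big)\neq0$, which forces $\dim HC^a_*(M,\alpha_n)\geq1$.

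For the filtered statement, the orbits representing $a$ all come from $k$-periodic points of $h_1$ and therefore, by the bound $\le kC$ noted just before the lemma, have period at most $kC$. Thus for every $L>kC$ we get an equality of complexes $C^a_{\le L}(M,\alpha_n)=C^a_*(M,\alpha_n)$ with the same differential, so the map induced by inclusion $HC^a_{\le L}(M,\alpha_n)\to HC^a_*(M,\alpha_n)$ is an isomorphism; its rank is $\dim HC^a_*(M,\alpha_n)\geq1$, in particular it is nonzero.

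The part I expect to be most delicate is the second paragraph: making the sign and grading conventions line up so that the contact-homology Euler characteristic in a fixed free homotopy class really coincides, up to an irrelevant overall sign, with the Nielsen--Lefschetz number $\Lambda_{h_1^k}(n)$, and checking that the ``single Nielsen class'' statement transported from $\psi$ through the homotopy is compatible with the graph-of-groups argument that confines the orbits of class $a$ to $M_0$. Everything else is a matter of assembling results already stated in the paper.
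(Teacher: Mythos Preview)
Your Euler-characteristic strategy is exactly the paper's: the paper rewrites it as $\dim E\neq\dim O$ where $E$ and $O$ are the even and odd subspaces of the chain complex, but this is your identity $\chi(C^a_*)=\pm\Lambda_{h_1^k}(n)$. Two points deserve comment.

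\medskip
\textbf{A different route for confinement.} You use the Bass--Serre structure of the torus decomposition to argue that no Reeb orbit outside $M_0$ can represent $a\in\Lambda_0$, so that $C^a_*(M,\alpha_n)$ is generated purely by periodic points of $h_1$ in the interior of $S$. This is correct (an element conjugate into two distinct vertex groups of a graph of groups with incompressible edge groups must be conjugate into an edge group, contradicting $a\notin\pi_1(T_k)$), and it simplifies both halves of the lemma: for the filtered statement you get $C^a_{\le L}=C^a_*$ outright. The paper does \emph{not} use this. It allows a nontrivial summand $C_1$ of orbits in $M\setminus M_0$ and instead invokes Lemma~\ref{cylindres_hyperboliques} to get a block-diagonal differential, then works only with the $C_0$ block. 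Your route is cleaner; the paper's is more self-contained (it needs no $3$-manifold group theory beyond incompressibility).

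\medskip
\textbf{A genuine gap.} You apply Theorems~\ref{theoreme_Nielsen_pA} and~\ref{theoreme_invariance_Nielsen} directly to the pseudo-Anosov $\psi$ on $S$, but in this paper those theorems are stated only for \emph{closed} surfaces (see the standing hypothesis at the start of Section~\ref{proprietes_pA}), while $S$ has boundary. The paper closes this gap via Theorem~\ref{theoreme_pA_boundary}: it extends $h_1$ by the identity over the capping disks, passes to a branched cover $\hat S$ (closed) on which there is a pseudo-Anosov $\hat\psi$ homotopic to the lift $\hat h_1$, lifts the Nielsen class $n$ to some $\hat n$, and \emph{then} applies Theorems~\ref{theoreme_Nielsen_pA} and~\ref{theoreme_invariance_Nielsen} on $\hat S$ to conclude $\Lambda_{\hat h_1^k}(\hat n)=\Lambda_{\hat\psi^k}(\hat n)\neq0$; since $\hat n$ consists of $s$ lifts of each point of $n$, this gives $\Lambda_{h_1^k}(n)\neq0$. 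Your argument needs this detour (or an independent justification of the bounded-surface versions of those theorems) to be complete within the paper's framework.
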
  

\begin{proof} 
Choose $\eta\in\Lambda_0$.
Write $C^\eta_*=C_0\oplus C_1$ where $C_0$ is generated by periodic orbits in $M_0$ homotopic to $\eta$ and $C_1$ is generated by periodic orbits in $M\setminus M_0$ homotopic to $\eta$. By Lemma \ref{cylindres_hyperboliques}, the differential is written
\[\left(\begin{array}{cc} 
\partial_\eta & 0 \\ 
0& * 
\end{array}\right).\] 
We prove that $\dim\left(\ker(\partial_\eta)/\im(\partial_\eta)\right)\geq 1$. Write $C_0=E\oplus O$ where $E$ is generated by even periodic orbits and $O$ by odd periodic orbits (as $\eta$ is primitive all the periodic orbits are good). Then
\[\partial_\eta=\left(\begin{array}{cc}
0 & \partial_O \\
\partial_E& 0
\end{array}\right)\]
and \[\ker(\partial_\eta)/\im(\partial_\eta)=\ker(\partial_E)/\im(\partial_O)\oplus\ker(\partial_O)/\im(\partial_E).\]
Hence, $ \dim\left(\ker(\partial_\eta)/\im(\partial_\eta)\right)=\{0\}$ if and only if $\dim(\ker(\partial_E))=\dim(\im(\partial_O))$ and $\dim(\ker(\partial_O))=\dim(\im(\partial_E))$.

By Section \ref{proprietes_pA}, there exist a branched cover $\hat S$ of $S$ and a pseudo-Anosov map $\hat \psi$ such that the lift $\hat h_1$ of $h_1$ is homotopic to $\hat \psi$. Let $c$ denote the Nielsen class associated to the periodic orbits in $M_0$ homotopic to $\eta$ and $k$ denote the order of the associated periodic points. Let $\hat c$ be a Nielsen class of $\hat h_1$ containing a lift of a point in $c$. As $c$ does not contain points in $\partial S$, all periodic points in $\hat c$ are non-degenerate and there exists $s$ such that $\hat c$ contains exactly $s$ lifts of each point in $c$. By Theorems \ref{theoreme_Nielsen_pA} and \ref{theoreme_invariance_Nielsen}, $\Lambda_{\hat h_1^k}(\hat c)=\Lambda_{\hat \psi^k}(\hat c)\neq 0$. A periodic point of $h_1$ is even if and only if the associated Reeb orbit is even. Therefore, we have $\Lambda_{\hat h_1^k}(\hat c)=s\dim(E)-s\dim(O)$ and \[\dim(\ker(\partial_O))+\dim(\im(\partial_O))\neq\dim(\ker(\partial_E))+\dim(\im(\partial_E)).\] Hence, 
$\dim\left(\ker(\partial_\eta)
/\im(\partial_\eta)\right)>0$ and $\dim(HC^{[\eta]}_*(M,\alpha_n))\geq 1$.

For all $L>kC$, write $C^\eta_{\leq L}=C_0\oplus C_{\leq L}$. As the differential is written
\[\left(\begin{array}{cc}
\partial_\eta & 0 \\
0& *
\end{array}\right)\]
the map
$HC^{[\eta]}_{\leq L}(M,\alpha_n)\to HC^{[\eta]}_*(M,\alpha_n)$ has a rank greater than $1$.
\end{proof}

\begin{proof}[Proof of Theorem \ref{theoreme_hyperbolique_homologie}]
It remains to prove that the growth rate of $HC^{\Lambda_0}_*(M,\alpha_n)$ is exponential. By Theorems \ref{theoreme_croissance_pA}, \ref{theoreme_Nielsen_pA}, \ref{theoreme_invariance_Nielsen} and \ref{theoreme_pA_boundary}, the number of Nielsen classes associated to periodic points of the first return map $h_1$ grows exponentially. As the number of homotopy classes in tori $T_k$ for $k=1\dots N$ exhibits a quadratic growth (Lemma~\ref{lemme_croissance_sigma}) and by Lemma \ref{lemma_lambda_0}, the growth rate of partial cylindrical homology is exponential. 
\end{proof}

\begin{proof}[Proof of Theorem \ref{theoreme_hyperbolique}]
By invariance of the growth rate of partial contact homology (Proposition \ref{proposition_invariance_croissance_partielle}), the growth rate of the number of periodic Reeb orbits is exponential if cylindrical contact homology is well defined (Remark \ref{remark_growth}), i.e. if the contact form is non-degenerate and hypertight.

Under Hypothesis H, let $\alpha^p_n$ be a non-degenerate contact form such that $\xi_n=\ker(\alpha^p_n)$. By Theorem \ref{theoreme_non_hyperbolique} and Lemma \ref{noncontractile} there exists an hypertight non-degenerate contact form $\alpha'_n$ of $\xi_n$. By Theorem \ref{theoreme_hyperbolique_homologie} the growth rate of cylindrical contact homology is exponential. Consider the map $A_*(M,\alpha^p_n, J)\to A_*(M,\alpha'_n, J')$ given by Theorem \ref{theoreme_Phi_0_1} and the pull back augmentation induced by the trivial augmentation on $A_*(M,\alpha'_n, J')$ (Proposition \ref{proposition_trivial_augmentation}). By invariance of the growth rate of linearized contact homology (Proposition \ref{proposition_invariance_croissance_linearisee}), $N_L(\alpha^p_n)$ exhibits an exponential growth.
\end{proof}

\end{document}